\documentclass{amsart}

\usepackage{etex}
\usepackage{bm}
\usepackage{amssymb}
\usepackage[stretch=10,shrink=10]{microtype}
\usepackage{tikz}
\usepackage{tikz-qtree}
\usepackage{enumerate}
\usepackage{theoremref}
\usepackage{comment}
\usepackage[showframe=false, margin = 1.5 in]{geometry}
\usepackage{pgfplots}
\pgfplotsset{compat=1.5}
\usepackage{hyperref}

\newcommand{\Bin}{\mathrm{Bin}}

\renewcommand{\emptyset}{\varnothing}
\newcommand{\E}{\mathbf{E}}
\renewcommand{\P}{\mathbf{P}}

\newcommand{\ZZ}{\mathbb{Z}}
\newcommand{\NN}{\mathbb{N}}

\newcommand{\TT}{\mathbb{T}}


\renewcommand{\emptyset}{\varnothing}

\newcommand{\Z}{\mathbb Z}
\newcommand{\tf}{\tfrac}

\newcommand{\f}{\frac}
\newcommand{\Aa}{\mathcal A}
\newcommand{\ind}[1]{\mathbf{1}{\{#1\}} }

\usepackage{color}
\definecolor{dark_green}{RGB}{1, 180, 1}

\DeclareMathOperator{\Poi}{Poi}
\DeclareMathOperator{\Ber}{Bernoulli}

\usepackage{microtype}
\usepackage{mathrsfs}
\tikzset{vert/.style={circle,fill,inner sep=0,
    minimum size=0.15cm,draw}, nerve/.style={circle,inner sep=0,
    minimum size=0.15cm,draw},
    rightnerve/.style={circle,inner sep=0,
    minimum size=0.3cm,draw, fill}}

\newcommand{\A}{\mathcal A}

\newcommand{\Tr}{\Lambda}

\newcommand{\Thom}{\mathbb T_d^{\text{hom}}}

\newcommand{\pproc}{\xi}

\newcommand{\rootvertex}{\varnothing}

\newcommand{\homtree}{\TT^{\text{hom}}}

\theoremstyle{plain}
\newtheorem{thm}{Theorem}
\newtheorem{lemma}[thm]{Lemma}
\newtheorem{prop}[thm]{Proposition}
\newtheorem{cor}[thm]{Corollary}
\newtheorem{define}[thm]{Definition}
\newtheorem{conjecture}[thm]{Conjecture}
\newtheorem{question}[thm]{Open Question}
\newtheorem*{claim}{Claim}

\theoremstyle{remark}
\newtheorem{remark}[thm]{Remark}

\tikzset{every tree node/.style={align=center}}
\usepackage{url}


\begin{document}


\title[Recurrence and transience for the frog model on trees]{Recurrence and transience\\ for the frog model on trees}

\author{Christopher Hoffman}
\address{Department of Mathematics, University of Washington}
\email{hoffman@math.washington.edu}

\author{Tobias Johnson}
\address{Department of Mathematics, University of Southern California}
\email{tobias.johnson@usc.edu}

\author{Matthew Junge}
\address{Department of Mathematics, University of Washington}
\email{jungem@math.washington.edu}

\begin{abstract}
The frog model is a growing system of random walks where a particle is added whenever a new site is visited. A longstanding open question is how often the root is visited on the infinite $d$-ary tree. We prove the model undergoes a phase transition, finding it recurrent for $d=2$ and transient for $d\geq 5$. Simulations suggest strong recurrence for $d=2$, weak recurrence for $d=3$, and transience for $d\geq 4$. Additionally, we prove a 0-1 law for all $d$-ary trees, and we exhibit a graph on which a 0-1 law does not hold.

To prove recurrence when $d=2$, we construct a recursive distributional equation for the number of visits to the root in a smaller process and show the unique solution must be infinity a.s. The proof of transience when $d=5$ relies on computer calculations for the transition probabilities of a large Markov chain. We also include the proof for $d \geq 6$, which uses similar techniques but does not require computer assistance.

\end{abstract}

\maketitle

\section{Introduction}


The frog model is a system of interacting random walks on a given rooted graph.
Initially, the graph contains one particle at the root and
some configuration of sleeping particles on its vertices;
unless otherwise stated, we will assume an initial condition of one sleeping particle per vertex.
The particle at the root starts out awake and performs a simple nearest-neighbor random walk in discrete time.
Whenever a vertex with sleeping particles is first visited, all the particles at the site wake up
and begin their own independent random walks, waking particles as they visit them.
A formal definition of the frog model is in \cite{shape}, and a nice survey of variations is in \cite{frogs}.
Traditionally, particles are referred to as frogs, a practice we will continue here.

One of the most basic questions about the frog model on an infinite graph is whether it is 
recurrent or transient. Telcs and Wormald determined that the frog model was recurrent on $\ZZ^d$ for
any $d$, the first published result on the frog model \cite{telcs1999}. 
On an infinite $d$-ary tree, this question is more difficult.
It was first posed in \cite{phasetree}. It was asked again in \cite{frogs} and
in \cite{recurrence}, which pointed out that the answer was unknown even on a binary tree.

Our main result in this paper is that the frog model is recurrent on the binary tree
but transient on the $d$-ary tree for $d\geq 5$, demonstrating a phase transition not found on $\ZZ^d$.
A branching random walk martingale argument proves transience when $d \geq 6$. Pushing this result
down to $d=5$ is more complicated and requires computer assistance.
Our proof of recurrence on the binary tree uses a bootstrapping argument in which we iteratively
assume that the number of visits to the root is stochastically large and then prove it even
larger; the argument seems novel to us.

\subsection*{Background on the frog model}

 It is common to use the frog model as a model for the spread of rumors or infections, thinking of awakened frogs as informed or infected agents. See \cite{daley} for an overview and \cite{epidemic} for more tailored discussion. 
 Another perspective on the frog model is as a conservative lattice gas model with the reaction $A + B \to 2 A$. Here $A$ represents an an active particle and $B$ an inert particle. Active particles disperse throughout the graph, igniting any inert particles they contact. 
Several papers taking this perspective study a process identical to the frog model except that particles
move in continuous rather than discrete time \cite{sid04,CQR,BR}. 
This process and its variants have also seen much study by physicists; see the references in \cite{CQR}
and \cite{BR}. Our results in this paper depend
only on the paths of the frogs and not on the timing of their jumps,
and so they apply equally well to this continuous-time process.

In the larger mathematical context, the frog model is part of a family of self-interacting random walks which have proven quite difficult to analyze. (\cite{pemantlesurvey} provides a nice survey of this family.)
In recent years progress on a few self-interacting random walks has generated considerable interest.  
One of these close relatives is activated random walk, which is touched on
in \cite{KS} and studied in depth in \cite{DRS,RS,ST}.  
Activated random walk can be described as a frog model where frogs fall back asleep
at some given rate. 
Another related process is excited random walk \cite{excited}.
This walk has a bias the first time it visits a site but is unbiased each subsequent time that it returns.
The frog model can be thought of as an ``excited" branching process, which branches at a site $v$ only the first time the process visits $v$.

Initial interest in the frog model was on the graph $\ZZ^d$. For any $d$, it was shown that the process
is recurrent \cite{telcs1999} and that the set of visited vertices grows linearly and when rescaled
converges to a limiting shape \cite{shape}. A similar shape theorem was proven independently in \cite{sid04} 
for the process with continuous-time particles.
Both shape theorems rely on the subadditive ergodic theorem. A technical difficulty that arises is proving that the expected time to wake a given frog is finite. Thus, measuring recurrence on a given graph is an initial step in understanding the long-time behavior of the model. Transience and recurrence continue to attract attention. Also on $\mathbb Z^d$, \cite{random_frog} establishes that the frog model 
undergoes a phase transition from transience to recurrence when the density of frogs decays proportional to distance to the origin. 
Frog models in which frogs move with a bias in one direction are studied on $\mathbb{Z}$ in \cite{recurrence} and \cite{integers_drift2} and on
$\mathbb{Z}^d$ in \cite{dobler2014} and \cite{01frog}.



Our main interest in this paper is in recurrence and transience on $\mathbb T_d$, the infinite rooted $d$-ary tree. We denote the root by $\emptyset$. 
We also study aspects of the process on $\homtree_d$, the
homogeneous degree $(d+1)$-tree, by which we mean the infinite tree where every vertex has degree~$d+1$.



Some attention has already been given to a relative of our model on $\homtree_d$
in which awake frogs die after independently taking a geometrically distributed number of jumps.
In \cite{phasetree} and \cite{po2}, the authors prove a phase transition for survival. Depending on the parameter, there will either be frogs alive at all times with positive probability, or the process will die out almost surely. We study the model in which frogs jump perpetually---a fundamentally different problem, since it switches the emphasis from the local to the global behavior of the model.

\subsection*{Statement and discussion of results}
For a given rooted graph, we call a realization of the model recurrent if the root is visited infinitely many times and transient if it is visited finitely often. 
Our main theorem covers the $d$-ary tree for all but two degrees:

\begin{thm}\thlabel{thm:2tree} \thlabel{thm:5tree} \ \

\begin{enumerate}[(i)]
    \item The frog model on $\TT_2$ is almost surely recurrent.\label{d2}
    \item  The frog model on $\TT_d$ for $d \geq 5$ is almost surely transient.\label{d5}
\end{enumerate}

\end{thm}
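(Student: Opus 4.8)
For the transience result (part (ii)), my plan is to compare the frog model to a branching random walk and exploit the martingale structure that arises on trees. The key idea is to track, for each frog that gets woken, the path of its random walk on $\TT_d$; since the tree is $d$-ary, a frog sitting at depth $k$ moves toward the root with probability $1/(d+1)$ and away from the root (to one of $d$ children) with probability $d/(d+1)$. This drift away from the root is what should drive transience for large $d$. First I would dominate the set of frogs that ever visit the root by a branching process: each newly woken frog spawns at most as many ``descendant visits'' to the root as an associated branching random walk started at its initial location. The natural quantity to control is something like $\sum_v \rho^{\abs{v}}$ summed over woken frogs, where $\rho$ is chosen so that $\rho^{\abs{v}}$ is a supermartingale under a single frog's walk; because a simple random walk step changes $\abs{v}$ by $\pm 1$ with the biased probabilities above, the correct choice is $\rho = 1/d$ (the return probability to the root from depth one on $\TT_d$). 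The heart of the argument is to show that the expected total contribution to root-visits is finite, which reduces to checking that a certain generating-function / martingale inequality holds; summing a geometric-type series over the tree gives convergence precisely when $d$ is large enough, and a direct computation should yield $d \geq 6$ cleanly.

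The main obstacle is pushing the threshold from $d \geq 6$ down to $d = 5$. As the excerpt states, this step ``requires computer assistance,'' so I anticipate that the clean martingale bound degrades right at $d=5$ and must be replaced by a more refined recursive estimate. My plan here is to set up a self-consistent fixed-point equation for the distribution (or the generating function) of the number of frogs that the subtree rooted at a child sends back through its parent, and then to bound this distribution by a tractable family---most plausibly a truncated or dominated distribution whose generating function satisfies a polynomial inequality. One then verifies numerically that the relevant fixed-point map is a contraction, or that a candidate supersolution genuinely dominates, for $d=5$. I expect the delicate point to be making the numerical verification rigorous: one must control the tail of an infinite series or the behavior of a transcendental equation with certified (interval-arithmetic or exactly rational) bounds so that the computer-assisted inequality constitutes a proof rather than mere evidence.

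For the recurrence result (part (i)) on $\TT_2$, the drift argument runs the wrong way---on the binary tree a frog's walk still has net drift away from the root, so transience would be the naive guess, and the recurrence must come from the sheer proliferation of woken frogs overwhelming the outward drift. My plan follows the bootstrapping scheme hinted at in the introduction. I would introduce a quantity measuring the strength of recurrence, for instance the distribution of the number of distinct frogs originating in the subtree below a vertex that ever reach that vertex, and set up a recursive stochastic inequality relating this quantity at a vertex to the analogous quantities at its two children. The bootstrap then works by positing that the number of root-visits stochastically dominates some distribution, feeding this hypothesis through the recursion, and showing the output dominates a \emph{strictly larger} distribution; iterating drives the dominated law up to one with infinite mean, forcing recurrence. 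The hard part will be choosing the right functional parameter and the right comparison family so that the recursion both closes (each child's contribution can be combined via the two independent subtrees plus the single backtracking step) and genuinely improves at each iteration; establishing the needed stochastic-domination inequality through the ``waking'' dynamics, where the number of frogs available depends on which sites have been visited, is where the main technical subtlety lies.
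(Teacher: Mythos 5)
Your overall architecture matches the paper's (branching random walk domination plus an exponential martingale for transience; a subtree recursion plus a bootstrapped stochastic-domination argument for recurrence), but there are concrete problems. For transience, your choice $\rho = 1/d$ is the wrong parameter and the step built on it fails: $\rho=1/d$ makes $\rho^{|v|}$ a martingale for a \emph{single} walk, but the quantity you propose to control, $\sum_v \rho^{|v|}$ over woken frogs, gains new summands every time a frog steps forward into unvisited territory. Dominating the frog model by the branching random walk in which every forward step spawns a particle, the one-step expected multiplier of the weight is $\tfrac{1}{d+1}\rho^{-1} + \tfrac{2d}{d+1}\rho$, which equals $\tfrac{d+2}{d+1}>1$ at $\rho=1/d$ — a submartingale, not a supermartingale. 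The correct optimization gives $\rho = 1/\sqrt{2d}$ and multiplier $2\sqrt{2d}/(d+1)$, which is below $1$ exactly when $d>3+2\sqrt2$, i.e.\ $d\ge 6$. For $d=5$ your proposed fixed-point/contraction scheme for the backflow distribution is not what the paper does and it is unclear it closes; the paper instead stays inside the martingale framework but sharpens the domination using a multitype branching random walk (27 types $P(a,b,c)$ recording how many frogs sit together and how many nearby sites are already emptied), reducing the problem to certifying, with exact rational arithmetic, that the Perron eigenvalue of an explicit matrix $\Phi(\log 3)$ is below $1$. You correctly anticipate that the numerics must be certified, but you have not identified a process for which the recursion both dominates the frog model and is computable.

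For recurrence on $\TT_2$, your outline is the right one but the two ideas that make it work are missing. First, the recursion does not close for the actual frog model: the number of returns through a child is not an independent copy of the root quantity because frogs cross between subtrees. The paper manufactures exact self-similarity by passing to a dominated ``self-similar'' model (non-backtracking walks stopped at the root and stopped upon entering already-visited territory), for which the number of returns $V$ satisfies a genuine recursive distributional equation in two independent copies of itself. Second, and more importantly, the bootstrap as you describe it — ``positing that the number of root-visits stochastically dominates some distribution'' — has no usable base case under ordinary stochastic dominance; the paper's fix is to run the entire argument at the level of probability generating functions (equivalently, a weaker pgf stochastic order), where the trivial bound $f\le 1 = \E x^{\Poi(0)}$ starts the induction, and to use the Poisson family $g_a(x)=e^{a(x-1)}$ as the comparison class, proving $\Aa g_a \le g_{a+c_a}$ with $c_a$ bounded away from $0$, so that iterating drives $a\to\infty$ and forces $f\equiv 0$. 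Without specifying the modified model, the comparison family, and the order in which domination is measured, the plan cannot be executed.
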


We also make a conjecture on these two unsolved degrees
based on fairly convincing evidence from simulations, presented
in Section~\ref{sec:conjectures}. 
\begin{conjecture}\thlabel{conj:phase_transition}
  The frog model on $\TT_d$ is recurrent a.s.\ for $d=3$ and transient 
  a.s.\ for $d= 4$.
\end{conjecture}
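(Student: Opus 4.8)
The plan is to attack the two halves of \thref{conj:phase_transition} separately, in each case pushing the method behind the corresponding half of \thref{thm:2tree} down (or up) to the critical degree. Since the phase transition is now known to sit strictly between $d=3$ and $d=4$, both halves live exactly at the boundary where the existing arguments lose their slack, and the whole difficulty is recovering that slack.

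For transience at $d=4$, recall that the transience proof for $d \ge 5$ dominates the frog model by a branching random walk and exhibits an exponential martingale $\rho^{|\cdot|}$ whose expected total mass at the root is summable; the governing quantity is the probability $1/d$ that a frog started at a child ever reaches its parent (the relevant root of $dq^2 - (d+1)q + 1 = 0$). The drift inequality that makes this work is clean for $d \ge 6$ and requires computer assistance at $d=5$. At $d=4$ the naive branching-random-walk domination over-counts frogs, since it re-wakes frogs that are already awake, and this over-count is precisely what pushes the estimate past the transience threshold. The first step is therefore to replace the crude domination by one that discounts already-woken frogs, yielding a strictly smaller (genuinely correlated) offspring mean; the second step is to find a weight function, possibly a finite multi-generation or non-exponential Lyapunov function rather than a single $\rho^{|\cdot|}$, under which the corresponding operator is a contraction at $d=4$; the third step is to certify the resulting scalar inequality rigorously by interval arithmetic, as was done for $d=5$.

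For recurrence at $d=3$, recall that the recurrence proof on $\TT_2$ bootstraps a stochastic lower bound on the number $V$ of visits to the root: assuming $V$ stochastically dominates a distribution $\mu_k$ and proving it then dominates a larger $\mu_{k+1}$, so that $V$ is stochastically unbounded. By self-similarity $V$ satisfies a recursive distributional inequality in terms of independent copies on the $d$ child-subtrees, each discounted by the per-frog return probability $1/d$. On $\TT_3$ there are three subtrees instead of two, but each is discounted by $1/3$ rather than $1/2$, and the net effect weakens the recursion just enough that the $d=2$ comparison family need not close. The plan is to strengthen the base case by a rigorous finite computation, namely a lower bound on $\P(V \ge j)$ for small $j$ on $\TT_3$ obtained by running the process for finitely many steps on a truncated tree and invoking monotonicity, and then to enlarge the family of comparison distributions to capture a heavier tail, so that a single bootstrap step provably closes with the improved base case as input.

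The main obstacle, in both directions, is that the methods fail only marginally at the boundary: the supermartingale drift for transience and the contraction for recurrence each have a margin that shrinks to zero as $d$ approaches the critical value, so better constants alone will not suffice. The real content lies in sharpening the recursive distributional estimates to account honestly for the frog-model correlations that both current arguments bound crudely, quantifying how much is given away by the over-counting in the transience domination and by the independence assumption in the recurrence bootstrap. I expect this correlation bookkeeping, together with the rigorous numerics needed to certify the final scalar inequalities at $d=3$ and $d=4$, to be the crux; indeed it is exactly this gap that keeps \thref{conj:phase_transition} a conjecture rather than a theorem.
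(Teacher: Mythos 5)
Your proposal does not prove anything, and it could not be checked against the paper's proof because the paper contains no proof of this statement: \thref{conj:phase_transition} is a conjecture, and the paper's only support for it is the simulation study of Section~\ref{sec:conjectures} (the stunned-frog counts $A_{d,k}$ and the heuristic that $k d^{-k}\E[A_{d,k}]$ bounded below suggests recurrence). Your text is a research program whose final paragraph concedes that the central difficulty --- the vanishing margin of both methods at $d=3,4$ and the honest bookkeeping of frog-model correlations --- is unresolved; that concession is accurate, but it means the ``proof'' consists entirely of the missing step. You also commit a concrete error in the setup: you assert that the phase transition ``is now known to sit strictly between $d=3$ and $d=4$.'' The paper proves only recurrence at $d=2$ and transience at $d\geq 5$ (\thref{thm:2tree}); whether the transition falls between $3$ and $4$, rather than at $2$/$3$ or $4$/$5$, is exactly the open content of the conjecture, so your framing assumes what is to be shown.

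On the substance of the two halves, your plan does track the authors' own methods, with some inaccuracies worth flagging. For $d=4$, a finer multitype branching random walk that discounts already-woken frogs is precisely the extension the paper itself proposes and explicitly declares ``well beyond current processing power''; so the certification step you treat as routine rigorous numerics is the known obstruction, not a finishing touch. Moreover, the $d=5$ case was certified by exact rational arithmetic --- computing $(\Phi(\log 3))^{1024}$ in SAGE and checking row sums are below $1$ --- not by interval arithmetic as you state. For $d=3$, note that the $\TT_2$ bootstrap does not run in the ordinary stochastic order at all: the paper works in the probability generating function order specifically because ordinary stochastic dominance admits no base case, so your plan of a computed base case in the usual order plus a heavier-tailed comparison family reintroduces the very problem the paper's argument was designed to avoid. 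The simulations in Figure~\ref{fig:data} suggest $d=3$ is at best barely recurrent (possibly only weakly recurrent, cf.\ Open Question~\ref{q:strong_weak}), so any fixed comparison family would operate with margin tending to zero, and no finite truncation argument of the kind you sketch is known to close. In short: the gap is the entire argument, and the paper's own discussion explains why both of your proposed routes stall at exactly these degrees.
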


The simulations suggest the possibility of a three-phase transition as $d$ increases. 
We call the model strongly recurrent if the probability the root is occupied is bounded away from zero
for all time.
We call it weakly recurrent if it is recurrent with positive probability, but the probability
that the root is occupied decays to zero.

\begin{question} \label{q:strong_weak}
Is the frog model strongly recurrent on $\TT_2$ but weakly recurrent on $\TT_3$?
\end{question}

Such a transition would give information about the time to wake all children of the root. For instance, strong recurrence on $\TT_2$ would imply that this time has finite expectation and an exponential tail. Should $\TT_3$ exhibit a weak recurrence phase, then a tantalizing problem would be to estimate the decay of the occupation time at the root.

The recurrence of the frog model on the binary tree established in \thref{thm:2tree}~(\ref{d2}) is the flagship 
result of this article. 
The proof goes by coupling the frog model with a process in which the root is visited less often. Let $V$ be the number of visits to the root in this restricted model. 
The payoff is \eqref{eqn:keyform}, a recursive distributional equation (RDE, see \cite{AB}) relating $V$ and two independent copies of itself. We find that $V\sim \delta_{\infty}$ is the unique solution. Thus, the original model is recurrent. 

The proof that $V\sim\delta_{\infty}$ is the unique solution of the RDE
uses a seemingly novel bootstrapping argument. 
We assume that $V$ dominates a Poisson  and show that in fact, $V$ dominates a Poisson with slightly
larger mean. It follows by repeating this argument that $V$ dominates any Poisson. 
One obstacle to making this work is that using the typical definition of stochastic dominance,
we cannot establish a base case for the argument.
To get around this, we instead use a weaker stochastic ordering defined in terms of generating functions,
under which the argument holds even when starting with the trivial base case of $V$ dominating 
the distribution $\Poi(0)$.
The situation is different for the frog model with initial conditions of $\Poi(\mu)$ frogs per site.
In this setting, we use the usual notion of stochastic dominance and a related
bootstrapping argument to prove a phase
transition from transience to recurrence on any $d$-ary tree as $\mu$ increases \cite{frog2}.

We believe that these ideas are more widely applicable.
Aldous and Bandyopadhyay study RDEs in general in \cite{AB}. 
Another example of analyzing an RDE
through an induced relation of generating functions can be found in \cite{Liu}.
The RDE~\eqref{eqn:keyform} in this paper is specific to our setting and much more complicated than the RDEs analyzed
in either of these sources. Still, we think that our argument can be applied to other RDEs,
including ones derived from similar interacting particle systems
like activated random walk and the frog model with death.


For the transience part of \thref{thm:2tree}, the idea is to dominate the frog model by a branching process. At the beginning of Section~\ref{sec:transience}, we show in a few lines that a doubling branching random walk is transient on the $14$-ary tree. A simple refinement in \thref{prop:6tree} improves this to $d \geq 6$. The case $d=5$ uses a branching random walk with 27 particle types. This is significantly more complicated,
and computing the transition probabilities requires computer assistance. Conceptually our approach could extend to a computer-assisted proof for transience when $d=4$, but the demands of this theoretical program seem well beyond current processing power. 




We present two other results besides Theorem~\ref{thm:2tree}.
The first is a 0-1 law for transience and recurrence of the frog model on a $d$-ary tree that
applies under more general initial conditions than one frog per site.
For a given distribution $\nu$ on the nonnegative integers, we consider the frog model on a $d$-ary
tree with the number of sleeping frogs on each vertex other than the root drawn independently
from $\nu$. The root initially contains one frog, which begins its life awake. Recall that when a site is
visited for the first time, all sleeping frogs at that site are awoken. We refer to this as
the frog model with i.i.d.-$\nu$ initial conditions. When $\nu=\delta_1$, this is the usual one-per-site
frog model. This theorem complements the 0-1 law for recurrence proven in \cite{recurrence} 
in a frog model on $\Z$ with drift. It also plays an important role in \cite{frog2}, where we use it to show
that the probability of recurrence for the frog model on a $d$-ary tree with i.i.d.-$\Poi(\mu)$
initial conditions jumps abruptly from 0 to 1 as $\mu$ increases. More recently, \cite{01frog} proved a 0-1 law for the frog model that applies in a wide range of circumstances. For instance, it establishes that recurrence holds either with probability~$0$ or $1$ for the frog model on any transitive graph with i.i.d.\ initial
conditions. 
This would apply immediately here, except that we work on $\TT_d$ rather than on $\Thom$.

\begin{thm}\thlabel{thm:01law}
 The frog model on $\TT_d$ for any $d$ and any i.i.d.\ initial conditions is
 recurrent with probability 0 or 1.
\end{thm}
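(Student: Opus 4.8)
The plan is to prove a $0$-$1$ law for recurrence of the frog model on $\TT_d$ under i.i.d.-$\nu$ initial conditions. The fundamental obstacle is that the event of recurrence is not obviously a tail event: unlike sums of independent random variables, the trajectories of frogs woken near the root can in principle influence whether the root is hit infinitely often, so we cannot directly invoke Kolmogorov's $0$-$1$ law. My strategy is therefore to establish that the probability of recurrence, as a function of the random environment, is almost surely constant by exploiting the self-similar recursive structure of the tree together with the conditional independence built into the frog dynamics.

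\medskip

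The first step is to set up the right notion of independence. I would fix the root $\rootvertex$ with children $v_1,\dots,v_d$, and for each child $v_i$ let $\TT^{(i)}$ denote the subtree rooted at $v_i$, which is itself a copy of $\TT_d$ (up to the single extra edge back to the root). The key structural observation is that the entire frog process can be decomposed according to which subtree each frog's random walk inhabits at a given time, and that the initial sleeping configurations on the $d$ subtrees are i.i.d. I would then condition on the finite history of the process up to the first time every child $v_i$ has been visited, so that after this (almost surely finite) stopping time every subtree $\TT^{(i)}$ has at least one awake frog at its root and the continuations of the process inside the distinct subtrees are conditionally independent given this history. The point is that recurrence of the whole process is equivalent to recurrence in at least one subtree (a frog can only reach $\rootvertex$ by first reaching some $v_i$ infinitely often), which lets me relate the recurrence probability on $\TT_d$ to the recurrence probabilities on the subtrees.

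\medskip

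The main work is then a standard-but-delicate argument to promote ``constant in distribution'' to ``almost surely constant.'' Let $p$ be the probability of recurrence. Using the decomposition above, I would argue that recurrence is, up to a probability-zero and an almost-surely-finite-time modification, determined by the tail of the configuration, i.e.\ insensitive to the sleeping frog counts on any fixed finite set of vertices and to the behavior of the process during any fixed finite time window. Concretely, waking a finite additional set of frogs or altering finitely many initial counts changes the process only through finitely many extra random walks, each of which visits the root only finitely often almost surely, so it cannot change whether the root is hit \emph{infinitely} often. This gives the needed exchangeability/tail-triviality: conditioning on finitely much information does not change the recurrence probability, and so the recurrence event is independent of every finite sub-$\sigma$-algebra. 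Taking the generated increasing union and applying a martingale (Lévy $0$-$1$ law) argument forces the conditional probability of recurrence to be $0$ or $1$ almost surely.

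\medskip

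The hard part, I expect, is making the conditional independence of the subtrees rigorous, because the frogs are not a fixed collection of independent walks — the set of active frogs is itself random and depends on the history through the waking mechanism, so the subtrees interact through frogs that cross the root between them. The cleanest way around this is to note that our results depend only on the \emph{paths} of the frogs and not on timing (as remarked earlier in the paper), so I would realize the model on a product probability space in which each potential frog is assigned an i.i.d.\ random walk trajectory in advance; then the only randomness coupling the subtrees is the deterministic bookkeeping of which frogs have been activated, and conditioning on the finite activation history genuinely decouples the tails. With that construction in place, the insensitivity-to-finite-modifications argument becomes a clean manipulation of the underlying independent trajectory variables, and the $0$-$1$ conclusion follows.
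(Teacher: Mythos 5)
Your central step---that ``waking a finite additional set of frogs or altering finitely many initial counts changes the process only through finitely many extra random walks, each of which visits the root only finitely often''---is false for the frog model, and this is precisely the obstruction that makes the 0--1 law nontrivial. Because of the cascading activation mechanism, a single extra awake frog can visit a previously unvisited vertex, wake the frogs there, and thereby trigger an infinite cascade of activations that would not otherwise have occurred; symmetrically, deleting one sleeping frog can extinguish such a cascade. So a finite modification of the initial configuration does not merely superimpose finitely many transient walks on an otherwise unchanged process: it can change the entire set of activated vertices, and hence can change whether the root is visited infinitely often. The recurrence event is therefore not a tail event of the initial configuration in the sense you need, and the L\'evy/Kolmogorov argument cannot be applied as described. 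Your product-space construction with pre-assigned trajectories does not repair this, because the ``deterministic bookkeeping of which frogs have been activated'' is exactly the part that is globally sensitive to finite changes. The same problem undermines your subtree decomposition: after the stopping time at which all children of the root are visited, the subtrees do \emph{not} evolve conditionally independently, since awake frogs freely cross the root from one subtree into another and wake frogs there.

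The paper's proof avoids both difficulties by exploiting monotonicity in one direction only. It first proves (Corollary~\ref{cor:clearedtree}, via \thref{lem:Gcoupling} and a ``dash'' argument) that clearing all sleeping frogs from levels $0,\dots,k-1$ and starting an active frog at level $k$ does not decrease the probability of infinitely many visits to the root. Granting that, if recurrence has probability $p>0$, one truncates to a finite window of levels $[k,K)$ to get an event of probability at least $p/2$ forcing $N$ visits to the root, and then stacks infinitely many such windows at disjoint depth ranges; these restarted processes use disjoint sets of sleeping frogs and are genuinely independent, so almost surely at least one succeeds, giving $N$ visits a.s.\ for every $N$. If you want to salvage your approach, you would need to replace the finite-modification claim with a one-sided statement of this kind, which essentially reduces to reproving Corollary~\ref{cor:clearedtree}.
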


Our final related result is that in contrast to the 0-1 law on $\TT_d$, there is a 
graph on which the frog model is recurrent with probability strictly between $0$ and $1$.

\begin{thm} \thlabel{thm:counterexample}
Let $G$ be the graph formed by merging the root of $\TT_6$ and the origin of $\mathbb Z$ into one vertex. The frog model on $G$ has probability $0<p<1$ of being recurrent. 
\end{thm}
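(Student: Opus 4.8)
The plan is to construct the graph $G$ by gluing the root of $\TT_6$ to the origin of $\Z$, and to exhibit two disjoint events, each of positive probability, one forcing recurrence and one forcing transience. Since $\TT_6$ and $\Z$ share only the merged vertex, the frog model on $G$ decomposes into two interacting halves, and the key observation is that \thref{thm:5tree}~(\ref{d5}) already gives transience of the frog model on $\TT_6$ in isolation, while the one-dimensional nearest-neighbor walk on $\Z$ is recurrent and will feed infinitely many frogs back through the origin if it is ever ``activated'' on the $\Z$-side.

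First I would establish the transient side. Consider the event that the initial frog at the origin takes its first step into the tree and that, thereafter, the frog model restricted to the tree portion behaves transiently while no frog ever crosses from the origin onto the $\Z$-ray. Because the frog model on $\TT_6$ is almost surely transient by \thref{thm:5tree}, the total number of tree-frogs that ever reach the root is almost surely finite; conditioning on the finite excursion structure, there is positive probability that none of these finitely many frogs, nor the root frog itself, ever steps onto either $\Z$-neighbor of the origin, so the $\Z$-portion stays entirely asleep. On this event only finitely many visits to the merged vertex occur, giving transience with positive probability. The main quantitative input here is that the expected number of root-visits in the transient tree model is finite, so a Borel--Cantelli or direct summation argument shows the probability of never spilling onto $\Z$ is strictly positive.

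Next I would establish the recurrent side. Here I use the event that the root frog's first step is onto one of the two $\Z$-neighbors of the origin. Once a frog is awake on $\Z$, simple random walk on $\Z$ is recurrent, so that frog returns to the origin infinitely often almost surely; moreover, as it explores $\Z$ it wakes the sleeping frogs at each newly visited integer, and each of those performs a recurrent walk as well, so infinitely many frogs visit the merged vertex infinitely often. Thus on this positive-probability event the model is recurrent. These two events are disjoint (they are distinguished by the direction of the root frog's first step and by whether $\Z$ is ever activated), and since recurrence is a tail-type event one must check that conditioning on the first step does not trivialize the outcome --- but the decomposition into the independent tree- and line-dynamics after the first step makes this transparent.

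The main obstacle is the transient side, specifically controlling the interaction: frogs woken on the tree can in principle wander back to the root and then leak onto $\Z$, so I must argue that with positive probability the \emph{entire} (almost surely finite) collection of frogs that ever touch the root avoids the $\Z$-ray forever. The cleanest route is to condition on the set of frog trajectories in the pure tree model, note that this set is almost surely finite in the sense that only finitely many distinct frogs visit the root, and then observe that each such frog independently has a uniformly positive probability of never stepping toward $\Z$ at the root; summing the failure probabilities and using finiteness of the expected number of root visits yields a positive probability that $\Z$ is never activated. Care is needed because a frog visiting the root could step to $\Z$ on \emph{any} of its visits, so I would bound the total expected number of root-steps available to spill over and invoke that this expectation is finite under transience of $\TT_6$, which is exactly what \thref{thm:5tree} supplies.
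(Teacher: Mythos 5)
Your recurrent-side argument contains a genuine gap. Once a frog is awake on the $\ZZ$-ray it does not perform a simple random walk on $\ZZ$; it performs a simple random walk on $G$, and each time it reaches the merged vertex it steps into $\TT_6$ with probability $6/8$, where it has a positive probability of never returning. A single random walk on $G$ is therefore transient, every awake frog visits the origin only finitely often, and no frog ``explores $\ZZ$'' and wakes every integer. Indeed, with positive probability all currently awake frogs on the ray wander into the tree and are lost before the next sleeping frog is reached, so the event that the frogs at $1,2,\ldots$ all wake has probability strictly less than one; showing that it is nonetheless \emph{positive} is the real content of this half of the theorem. The paper does this in \thref{lem:right} by induction: once the frog at $n$ is woken there are two independent walkers at $n$, at least one of which reaches $n+1$ before $0$ with probability $1-1/(n+1)^2$, and the infinite product of these factors is bounded away from zero. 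Recurrence then follows because a woken frog on the positive ray must hit $0$ before it can do anything else. Your proposal replaces all of this with the false assertion that the individual walks are recurrent on $\ZZ$, so it does not establish $p>0$.

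Your transient side is closer to workable, but the quantitative step is misstated. Finiteness of the expected number of root visits does not yield a positive probability that no visiting frog ever steps onto $\ZZ$: Markov's inequality bounds the probability of at least one spill by $\tfrac{2}{8}$ times that expectation, which is useless unless the expectation is less than $4$, and you have no such bound. What one actually needs is $\E\bigl[(6/8)^{N}\bigr]>0$, where $N$ is the number of steps taken from the root in the pure $\TT_6$ model, obtained by revealing an independent tree-versus-$\ZZ$ coin at each root visit; this uses only the almost-sure finiteness of $N$ (i.e.\ transience), not finite expectation. The paper sidesteps the bookkeeping entirely via \thref{lem:lost}: with positive probability $p'$ the root of $\TT_6$ is never revisited after the initial frog's first step (if it were revisited a.s., the dominating branching random walk would return to $0$ infinitely often by independence of its particles, contradicting \thref{prop:6tree}), so conditioning on the first step going into the tree gives $1-p\geq \tfrac{6}{8}p'>0$ with no need to control later spills at all.
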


We remark that \cite{random_frog} exhibits a frog model without a 0-1 law on $\ZZ^d$. In their example the
initial distribution of frogs decays in the distance from the origin. 

A few of our proofs would be simplified by changing the setting from $d$-ary to homogeneous trees. However, 
we are interested in applying these results to 
finite trees, and the infinite $d$-ary tree is more natural to work with from that perspective. In any event, the techniques 
underlying our theorems can all be cleanly modified to prove similar 
statements about the homogeneous tree.


\section{Recurrence for the binary tree}\label{sec:recurrence}

 An outline of our proof is as follows. 
 We start by a defining a process that we call the self-similar frog model. A consequence of Proposition~\ref{kermit} is that the number of visits to the root in this model is stochastically 
 smaller than in the original one.
 Thus it suffices to prove the self-similar frog model recurrent. 
    To do this, we define the random variable $V$ to be the number of returns to the root and set $f(x)=\E x^V$, the generating function of $V$. 
    The self-similarity of our model established in \thref{prop:selfsim} allows us to show in Proposition~\ref{mustangsally} that the generating function satisfies the relation $f=\A f$ for an explicit operator $\A$. In \thref{sandinthevaseline}, we show that $\A$ is monotone on a large class of functions. Combining this with $f \leq 1$ on $[0,1]$, we get
 $$f=\A^n f\leq \A^n 1,$$ 
and in \thref{lem:limAa}, we prove that this converges to 0 as $n\to\infty$. 
This implies that $f \equiv 0$ and $V= \infty$ a.s.

This proof can be interpreted as an argument about stochastic orders.
One can define a stochastic order by saying
that if $X$ and $Y$ are nonnegative integer-valued random variables
and $\E t^X\geq \E t^Y$ for $t\in(0,1)$, then $X$ is \emph{smaller in the
probability generating function order} than $Y$.  This order and an equivalent
one called the  Laplace transform order
are discussed 
in \cite[Section~5.A]{SS}.
From this perspective,
each application of the  operator $\Aa$ shows that the distribution
of $V$ is slightly larger in this stochastic order.

  \subsection{The non-backtracking frog model} \label{sec:non-backtracking}
  We will define the \emph{non-backtracking frog model}, in which frogs
  move as random non-backtracking walks stopped at the root.
  More formally, we define the random non-backtracking walk $(X_n,\,n\geq 0)$ as
  a process taking values in $\TT_d$, with
  $X_0=x_0$. On its first step, the walk moves to a uniformly random neighbor of $x_0$.
  At every subsequent step, it chooses uniformly from its neighbors other than the one
  from which it arrived. 
  We emphasize that a non-backtracking walk \emph{can} move towards the root of the tree, though once it moves
  away from the root it will continue doing so.
  Let $T=\inf\{n\geq 1\colon X_n=\rootvertex\}$, taking this to be
  $\infty$ if the walk never visits $\rootvertex$.
  Define the non-backtracking frog model by
  changing the frog's paths in the definition
  of the frog model from simple random walks to the stopped non-backtracking
  walks given by $(X_{n\wedge T},\,n\geq 0)$. Notice that the initial frog is never stopped in this
  model, and only one child of the root is ever visited. Call this child $\rootvertex'$.
  
   \subsection{The self-similar frog model} \label{sec:selfsim}
We make one further alteration to the non-backtracking frog model. Let $\mathbb T_d(v)$ denote the subtree of $\mathbb T_d$ consisting of $v$ and its descendants. Our goal is to make the process viewed on any $\mathbb T_d(v)$ behave identically (in distribution) to the original process. To achieve this, we cap the number of frogs entering $\mathbb T_d(v)$ at one. More formally,  the \emph{self-similar frog model} is the non-backtracking frog model with an additional restriction for each non-root vertex~$v'$ with parent~$v$:
%
  		\begin{itemize}
    		\item Suppose that $v'$ is visited for the first time, necessarily by one or more frogs moving from $v$ to $v'$. Arbitrarily choose all but one of these frogs and stop them at $v'$.
    		\item At all subsequent times, if a frog moves from $v$ to $v'$, stop its path as well.  
    	\end{itemize}
%
%
%
%
The result of this rule is that the number of frogs entering \emph{any} subtree $\mathbb T_d(v')$ is no more than one. 

We now show that in the self-similar frog model, the number of frogs emerging from subtrees activated by a frog is identically distributed for all subtrees. 
   Let $V=V_{\emptyset'}$ be the number of visits to the root in the self-similar
  frog model. Note that only frogs initially sleeping in $\TT_d(\emptyset')$ have a chance of visiting the root. 
  Suppose that vertex~$v$ is visited by a frog. Conditional on this,
  let $v'$ be the child of $v$ that
  the waking frog moves to next, and define $V_{v'}$ as the number of visits to $v$ from frogs in $\TT_d(v')$, the subtree rooted at $v'$.
  \begin{prop} \thlabel{prop:selfsim}
    The distribution of $V_{v'}$ 
    conditional on some frog visiting $v$ and moving next to $v'$
    is equal to the (unconditioned) distribution of $V$.
  \end{prop}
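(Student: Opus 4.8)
The plan is to produce an explicit coupling that identifies the evolution of the self-similar frog model inside $\TT_d(v')$, run under the stated conditioning, with an unconditioned copy of the self-similar frog model inside $\TT_d(\emptyset')$. The backbone is the canonical graph isomorphism $\phi$ from $\{\emptyset\}\cup\TT_d(\emptyset')$ onto $\{v\}\cup\TT_d(v')$ sending $\emptyset\mapsto v$ and $\emptyset'\mapsto v'$; since each of these is just a root with a single attached $d$-ary subtree, $\phi$ is well defined and carries the edge $(\emptyset',\emptyset)$ to $(v',v)$. Because $\TT_d$ is a tree, the only access between $\TT_d(v')$ and the rest of the graph is through this edge: every frog initially sleeping in $\TT_d(v')$ that reaches $v$ must step from $v'$ to $v$, and every frog from elsewhere that reaches $\TT_d(v')$ must step from $v$ to $v'$. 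Thus both $V=V_{\emptyset'}$ and $V_{v'}$ reduce to counts of upward crossings of the respective top edges, and $\phi$ matches the two counts.

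The heart of the argument is a freshness claim, which I would establish by revealing the frogs' trajectories in order of activation. Since activity in the self-similar model originates at $\emptyset$ and the set of visited vertices is connected, any vertex is first visited from its parent; in particular $v$ is first reached from above, so the conditioning event that $v$ is visited and that the waking frog next moves to $v'$ is measurable with respect to the trajectories of frogs outside $\TT_d(v')$ together with a single fresh step of the waking frog. At the instant that frog enters $v'$, no frog based in $\TT_d(v')$ has yet moved. By the strong Markov property their non-backtracking increments are still i.i.d.\ and independent of the conditioning event, so conditioning does not bias their law: inside $\TT_d(v')$ we see a pristine one-per-site configuration with fresh walks, exactly as the unconditioned model presents at $\emptyset'$.

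It then remains to check that the self-similar stopping rule seals the subtree, making its internal dynamics an exact copy of the whole model under $\phi$. The waking frog entering $v'$ plays the role of the initial frog entering $\emptyset'$, waking the resident of $v'$ and launching the identical cascade. The one discrepancy between the two pictures is that frogs are stopped upon reaching the true root $\emptyset$ but not upon reaching the internal vertex $v$; the stopping rule reconciles this, since a frog that reaches $v$ and then attempts to step back into the already-visited $v'$ is stopped there, so it neither penetrates $\TT_d(v')$ nor produces a further crossing of $(v',v)$, and likewise any outside frog entering $v'$ is halted immediately and contributes nothing to $V_{v'}$. Hence exactly one live frog ever drives the cascade in $\TT_d(v')$, matching the single initial frog at $\emptyset'$, and transporting the dynamics through $\phi$ gives that the two edge-crossing counts have the same law, i.e.\ $V_{v'}\eqd V$.

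The step I expect to be the main obstacle is the freshness claim: making rigorous that the conditioning event is carried entirely by the external randomness so that the internal walks remain unconditioned. The clean way to do this is to fix a filtration in which trajectories are revealed in order of activation and to apply the strong Markov property at the stopping time when $v'$ is first entered. The remaining care is bookkeeping to confirm that the stopping rule exactly cancels the root-versus-internal boundary discrepancy, so that no frog is miscounted in passing between the two models.
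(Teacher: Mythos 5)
Your argument is correct and follows essentially the same route as the paper: seal off $\TT_d(v')$, observe that the conditioning event is carried entirely by the randomness external to the subtree so that the interior configuration and walk increments are fresh, and transport the dynamics through the canonical relabeling of $\{v\}\cup\TT_d(v')$ as $\{\emptyset\}\cup\TT_d(\emptyset')$; the paper's proof is a terser statement of exactly this coupling. One small correction to your boundary bookkeeping: the reason a frog that exits $\TT_d(v')$ at $v$ never re-enters is not the stopping rule (which only governs a newly woken frog's first step away from the root), but the fact that a non-backtracking walk on a tree is self-avoiding---having arrived at $v$ from $v'$ it cannot choose $v'$ as its next step and can never return to $v$ thereafter. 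The conclusion of that step, and hence of the proof, is unaffected.
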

  \begin{proof}
    Let $x$ be the frog that wakes vertex~$v$ and moves from there to $v'$.
   Besides~$x$, all frogs that start outside of $\TT_d(v')$
  get stopped when they try to enter this subtree. Thus, from the time that $v'$ is woken
  on, if we consider the model restricted to $\{v\} \cup \TT_d(v')$,
  it looks identical to the original self-similar frog model (see Figure \ref{fig:selfsim}).
  
  To turn this into a precise statement, consider the model from the time
  $x$ visits $v$ on. Ignore the frog initially at $v$. Freeze frogs when they visit $v$ from
  $\TT_d(v')$. Since no frogs ever enter $\TT_d(v')$, the process depends only
  on the frogs initially in $\TT_d(v')$ and the initial frog~$x$.
  Relabeling vertices $\{v\}\cup\TT_d(v')$ as
   $\{\varnothing\}\cup\TT_d(\varnothing')$ in
  the obvious way then produces a process identically distributed as the original self-similar
  frog model.
  Thus $V$ and $V_{v'}$ are functionals of identically distributed processes.
  \end{proof}

  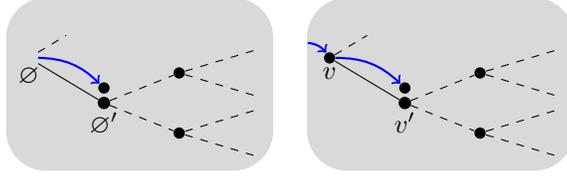
\begin{figure}
    \begin{center}
\begin{tikzpicture}[scale = 1,tv/.style={circle,fill,inner sep=0,
    minimum size=0.15cm,draw}, starttv/.style={circle,draw,inner sep=0,
    minimum size=0.15cm},walklines/.style={very thick, blue,bend left=15}]
    \begin{scope}[shift={(-2,0)}]
    \fill[black!15!white,rounded corners=15pt] (-0.3, -1.5) rectangle (3.25, .8);

      \path (0,0) node (R) {}
        (1,-0.6) node[tv] (L1) {}
        (1, 0.6)  node (L2) {}
        (2, -0.2) node[circle, fill, inner sep = 0, minimum size = .15cm] (L12) {}
        (1, -.4) node[circle, fill, inner sep = 0, minimum size = .15cm] (LU12) {}
        (2,-1) node[circle, fill, inner sep = 0, minimum size = .15cm] (L11) {}
        (L12)+(1,-0.3) coordinate (L121)
             +(1, 0.3) coordinate (L122)
        (L11)+(1,-0.3) coordinate (L111)
             +(1, 0.3) coordinate (L112);
  \draw 
       (R)--(L1);
\node[below] at (R){$\emptyset$};
\node[below] at (L1){$\emptyset'$};
    \draw[dashed] (L1)--(L11);
    \draw[dashed] (R) -- (.5,.3);
       \draw[dashed] (L1) -- (L12);
  \draw [dashed] 
       (L12)--(L122) (L12)--(L121);
   
  \draw[dashed]     (L11)--(L112) (L11)--(L111);
 
\draw[->,thick, blue,bend left=22] (R)  to (LU12);
  \end{scope}

  \begin{scope}[shift={(2,0)}]
 \fill[black!15!white,rounded corners=15pt] (-0.3, -1.5) rectangle (3.25, .8);
    
      \path (0,0) node[circle, fill, inner sep = 0, minimum size = .15cm] (R) {}
        (1,-0.6) node[tv] (L1) {}
        (1, 0.6)  node (L2) {}
        (2, -0.2) node[circle, fill, inner sep = 0, minimum size = .15cm] (L12) {}
        (1, -.4) node[circle, fill, inner sep = 0, minimum size = .15cm] (LU12) {}
        (2,-1) node[circle, fill, inner sep = 0, minimum size = .15cm] (L11) {}
        (L12)+(1,-0.3) coordinate (L121)
             +(1, 0.3) coordinate (L122)
        (L11)+(1,-0.3) coordinate (L111)
             +(1, 0.3) coordinate (L112);
  \draw 
       (R)--(L1);
\node[below] at (R){$v$};
\node[below] at (L1){$v'$};
    \draw[dashed] (L1)--(L11);
    \draw[dashed] (R) -- (.5,.3);
       \draw[dashed] (L1) -- (L12);
  \draw [dashed] 
       (L12)--(L122) (L12)--(L121);
   
  \draw[dashed]     (L11)--(L112) (L11)--(L111);
  
\draw[->,thick, blue,bend left=22] (R)  to (LU12);
\draw[->,thick, blue,bend left=22] (-.29,.2)  to (R);
  \end{scope}
\end{tikzpicture}
    \end{center}
    \caption{Conditional on $v$ being visited, $V$ and $V_{v'}$ are identically distributed in the self-similar model.}
    \label{fig:selfsim}
  \end{figure}

   \subsection{Coupling the models} \label{sec:stubcoupling}
   
   Suppose we wanted to couple a simple and a non-backtracking random walk starting
   from a vertex~$v$ on the 
   homogeneous tree~$\Thom$. Almost surely, there is a 
   unique geodesic from $v$ to infinity that intersects the walk infinitely many times,
   obtained by trimming away the backtracking portions from the walk.
   By symmetry, this geodesic is a uniformly random non-backtracking walk on $\Thom$,
   coupled so that its path is a subset of the simple random walk's path.
   If we were working on $\Thom$ and not $\TT_d$, we could couple the non-backtracking and usual
   frog models as desired by coupling each frog in this way. To address the asymmetry 
   of $\TT_d$ at its root,
   our coupling of non-backtracking and normal frogs on $\TT_d$ will involve an intermediate coupling with
    a random walk on $\Thom$.

  \begin{prop} \label{kermit}
    There is a coupling of the non-backtracking, the self-similar  and the usual frog models so that
    the path of every non-backtracking (self-similar) frog  is a subset of the path of the
    corresponding frog in the usual model.
  \end{prop}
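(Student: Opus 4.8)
The plan is to reduce the proposition to a single-walk coupling and then lift it to the three frog models by a monotonicity induction. The single-walk statement I want is: for each vertex $v\neq\emptyset$ there is a coupling of a simple random walk $U$ on $\TT_d$ from $v$ with a correctly distributed non-backtracking walk $N$ on $\TT_d$ from $v$ stopped at $\emptyset$, so that the vertex set of $N$ is contained in that of $U$. Granting this, I assign to every frog an independent such pair $(U_x,N_x)$; I run the usual model using the walks $U_x$, the non-backtracking model using the walks $N_x$ (each started when its frog wakes), and the self-similar model on the same $N_x$ but with each frog additionally halted as soon as its first step enters an already-visited site. Then self-similar $\subseteq$ non-backtracking is immediate, since a self-similar frog traces a prefix of $N_x\subseteq U_x$. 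For non-backtracking $\subseteq$ usual I induct on the order in which vertices are first visited: if $y$ wakes $x$ in the non-backtracking model, then $y$ was woken earlier there, hence (inductive hypothesis) in the usual model, and $U_y\supseteq N_y\ni x$ visits $x$, so $x$ is woken in the usual model. Timing is irrelevant, since the claim concerns only the set of sites each frog visits.

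For the single-walk coupling I use the homogeneous tree as an intermediate, as the excerpt suggests. Enlarge $\TT_d$ to $\Thom$ by attaching one extra subtree at $\emptyset$ through a new child $c_0$, the \emph{phantom} subtree $\TT_d(c_0)$, and let $\TT_d(c)$ be the root-subtree containing $v$. On $\Thom$, the geodesic from $v$ to the limit of a simple random walk $S$ is, by vertex-transitivity, a uniform non-backtracking ray, and its vertex set lies in that of $S$; I take $N$ to be this geodesic stopped at $\emptyset$. The law of $N$ is correct because until it is stopped the walk stays inside $\TT_d(c)$, which is embedded identically in $\TT_d$ and in $\Thom$, so stopping the uniform ray of $\Thom$ at the root produces exactly the non-backtracking walk of $\TT_d$ stopped at the root. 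What remains is to couple $S$ with a genuine simple random walk $U$ on $\TT_d$ so that this stopped geodesic lies in $U$'s trajectory.

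The containment splits into two cases according to $\xi=\lim S$. If $\xi\in\partial\TT_d(c)$, the geodesic never reaches the root and $N$ equals it in full, so I need $\lim U=\xi$; if $\xi\notin\partial\TT_d(c)$, the geodesic crosses $\emptyset$, $N$ is the ancestral path from $v$ to $\emptyset$, and I only need $U$ to visit the root. Both can be arranged because the exit measures compare at the root. Decomposing according to the first visit to $\emptyset$, the exit measure on $\partial\TT_d(c)$ has a common "never hit the root" part together with a "hit the root, then escape to $\xi$" part that, since $\emptyset$ has $d$ subtrees in $\TT_d$ but $d+1$ in $\Thom$, is smaller on $\Thom$ by the factor $\tfrac{d}{d+1}<1$. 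Hence the harmonic measure of $S$ restricted to $\partial\TT_d(c)$ is dominated by that of $U$. This domination lets me couple the limits so that $\lim U=\lim S$ whenever $\lim S\in\partial\TT_d(c)$, while the excess mass of $U$ on $\partial\TT_d(c)$, which is no larger than the part of $U$'s mass that visits the root, absorbs exactly the realizations where $\lim S\notin\partial\TT_d(c)$. Building $U$ and $S$ from their limits by the corresponding Doob $h$-transforms then gives $N\subseteq U$ in every case.

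The main obstacle is precisely this coupling at the root. Naively making $U$ and $S$ agree until their first visit to $\emptyset$ fails, because $S$ may cross the root yet still return to converge inside $\TT_d(c)$, and once the walks separate there is no reason for $U$ to contain the deep part of $[v,\xi)$. The fix is to couple the two walks through their limiting boundary points rather than step by step, via the harmonic-measure domination above; establishing that domination and checking that the leftover mass can always be routed through the root is the technical heart of the argument. Everything else—the symmetry computation on $\Thom$, the identification of $\TT_d$ and $\Thom$ below the root, and the monotone lift to the frog models—is routine.
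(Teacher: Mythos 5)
Your reduction to a single-walk coupling and the monotone lift to the three frog models matches the paper's (implicit) structure, and your use of $\homtree_d$ as an intermediate with the geodesic-to-the-limit as the non-backtracking walk is exactly the paper's first step. Where you genuinely diverge is in how you turn the simple random walk $S$ on $\homtree_d$ into a simple random walk $U$ on $\TT_d$ containing the stopped geodesic. The paper does this by pathwise surgery: it deletes all excursions of $S$ away from $\TT_d$, collapses repeated visits to the root, and (if the resulting path is finite) appends an independent simple random walk; independence of excursions shows the result is a simple random walk on $\TT_d$, and the containment is then automatic because the stopped geodesic lies in $\{S_n\}\cap\TT_d$, which is exactly the range of the surgered walk. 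You instead couple through the boundary: you show the harmonic measure of $S$ restricted to $\bdry\TT_d(c)$ is dominated by that of $U$ (the excess, a factor $\tfrac1d-\tfrac1{d+1}$ of the root-hitting probability times a common normalized measure, being carried entirely by root-visiting realizations of $U$), match limits on $\{\xi\in\bdry\TT_d(c)\}$, route the leftover $S$-mass to root-visiting realizations of $U$, and reconstruct paths by $h$-transform. This is correct --- the mass accounting works out (both leftovers have mass $h\cdot d/(d+1)$, and every leftover $U$-scenario hits the root) --- but it is heavier: you must verify the domination, decompose $\mu_U|_{\bdry\TT_d(c)}$ by whether the root is hit, and condition the reconstruction of $U$ on both its limit and the root-hitting event. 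The paper's surgery buys you the containment for free at the cost of one check (that excursion deletion preserves the law of simple random walk); your route buys a conceptually transparent statement about exit measures at the cost of the bookkeeping you rightly call the technical heart. Two small omissions on your side: the initial frog at the root needs a separate (easier, purely on $\TT_d$) treatment since your single-walk lemma assumes $v\neq\emptyset$, and the final reconstruction step deserves a sentence confirming that a simple random walk on $\TT_d$ conditioned jointly on its limit point and on hitting the root is a well-defined process from which you can sample.
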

  \begin{proof}
    First, we couple a non-backtracking walk to a simple random walk not on $\TT_d$, but 
    on $\homtree_d$.
    Let $(Y_n,\,n\geq 0)$ be a simple random walk on $\homtree_d$ starting at $x_0$. This random walk
    diverges almost surely to infinity, and there is a unique geodesic from $x_0$ to the path's
    limit. Let $(X_n,n\geq 0)$
    be the path of this geodesic. By the symmetry of $\homtree_d$, the process $(X_n)$ is
    a random non-backtracking walk from $x_0$.
        
    Next, we consider $\TT_d$ as a subset of $\homtree_d$ and
    define a new random walk $(Z_n,\,n\geq 0)$ by modifying $(Y_n)$ as follows.
    First, delete all excursions of $(Y_n)$ away
    from $\TT_d$. This might leave the walk sitting at the root for consecutive steps;
     if so,
     we replace all consecutive occurrences of the root by a single one.
     This results in either an infinite path on $\TT_d$ or a finite path on $\TT_d$ truncated at a visit to the root. In the second case, we extend the path by tacking on an independent simple random walk to its end.
     It follows from the independence of excursions in simple random walk that the
     resulting process $(Z_n)$ is a simple random walk on $\TT_d$.
    
    Let 
    $T$ be the first time past~$0$ that $(X_n)$ hits the root, or $\infty$ if it never does.
    By our construction, $\{X_0,\ldots,X_T\}\subseteq \{Z_n,\,n\geq 0\}$.
    Thus we have coupled the stopped non-backtracking walks and simple random walks
    on $\TT_d$. Coupling each frog in the non-backtracking frog model to the corresponding
    frog in the usual model gives the desired coupling between the non-backtracking and usual frog models.
    As the self-similar model is obtained by stopping frogs in the non-backtracking model,
    we obtain a coupling for it as well.
  \end{proof}

  \subsection{Generating function recursion}

We now apply the self-similarity described in \thref{prop:selfsim} to obtain a relation satisfied by the generating function for the number of visits to the root in the self-similar model.

\begin{define}
    Let $V$ be the number of visits to the root in the self-similar frog model on $\mathbb T_2$. Define $f\colon[0,1] \to [0,1]$ by $f(x) = \E x^V$ with the convention that if $V = \infty$ a.s.\ then $f(1) =0$. 
\end{define}

    \begin{figure}
    \begin{center}
\begin{tikzpicture}[scale = 1,tv/.style={circle,fill,inner sep=0,
    minimum size=0.15cm,draw}, starttv/.style={circle,fill,inner sep=0,
    minimum size=0.15cm},walklines/.style={very thick, blue,bend left=15}]
\fill[black!15!white,rounded corners=15pt] (-0.3, -1.5) rectangle (3.25, 1.2);
    \path (0,0) node[starttv] (R) {}
        (1,-0.6) node[tv] (L1) {}
        (1, 0.6)  node (L2) {}
        (2, -0.2) node[circle, draw, inner sep = 0, minimum size = .15cm] (L12) {}
        (2,-1) node[circle, fill, inner sep = 0, minimum size = .15cm] (L11) {}
        (L12)+(1,-0.3) coordinate (L121)
             +(1, 0.3) coordinate (L122)
        (L11)+(1,-0.3) coordinate (L111)
             +(1, 0.3) coordinate (L112);
  \draw 
       (R)--(L1);
 \path[every node/.style={font=\sffamily\small}]
       (L1) edge [->,bend right,blue] node[above] {\;\;$\textcolor{black}{V}$} (R)
       (L11) edge [->,bend left,blue] node[below] {\;$\textcolor{black}{V_v}$} (L1)
       (L12) edge [->,bend right,blue] node[above] {\;$\textcolor{black}{V_u}$} (L1);
\node[below] at (R){$\emptyset$};
\node[below] at (L12){$u$};
\node[below] at (L11){$v$};
\node[below] at (L1){$\emptyset'$\;\;\;};
    \draw (L1)--(L11);
    \draw[dashed] (R) -- (.5,.3);
       \draw[dashed] (L1) -- (L12);
  \draw[dashed]  
       (L12)--(L122) (L12)--(L121);
   
  \draw     (L11)--(L112) (L11)--(L111);

\end{tikzpicture}
    \end{center}
    \caption{$V$ is the total number of visits to $\emptyset$ in the self-similar process, $V_v$ and $V_u$ are the number of visits to $\emptyset'$ from frogs originally in $\mathbb T_2(v)$ and $\mathbb T_2(u)$, respectively. In the self-similar model $V, V_v$, and $V_u \mid \{\text{$u$ is visited} \}$ are identically distributed}
    \label{fig:ssf}
  \end{figure}
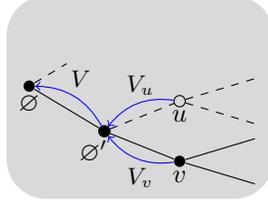
  
  \begin{prop}\label{mustangsally}
    Define $\Aa$, an operator on functions on $[0,1]$, by
  \begin{align}
    \Aa g(x) &=
      \frac{x+2}{3}g\Bigl(\frac{x+1}{2}\Bigr)^2 
      +\frac{x+1}{3}g\Bigl(\frac{x}{2}\Bigr)\biggl(1
        -g\Bigl(\frac{x+1}{2}\Bigr)\biggr).\label{eq:Aadef}
  \end{align}
  The generating function~$f$ satisfies $f=\Aa f$. 
    \end{prop}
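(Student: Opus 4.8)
The plan is to realize $V$ as a functional of two independent copies of itself and read off $f=\Aa f$ from the resulting distributional identity, using the self-similarity in \thref{prop:selfsim}. Write $\emptyset'$ for the unique child of the root ever visited, let $v$ be the child of $\emptyset'$ into which the frog waking $\emptyset'$ continues, and let $u$ be the other child. By \thref{prop:selfsim}, the number $V_v$ of visits to $\emptyset'$ by frogs originating in $\TT_2(v)$ is distributed as $V$; likewise, on the event that $u$ is ever entered, the number $V_u$ of visits from $\TT_2(u)$ is an independent copy of $V$, while $V_u=0$ on the complementary event.

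Next I would record how visits to $\emptyset'$ turn into visits to the root. A frog visiting $\emptyset'$ from below arrives along the edge from $u$ or $v$ and, being non-backtracking, proceeds to the root (a root visit) or to the sibling child, each with probability $\tfrac12$; the frog $F_0$ woken at $\emptyset'$ instead picks uniformly among its three neighbors $\{\rootvertex,u,v\}$ and so reaches the root with probability $\tfrac13$. The one structural fact that closes the recursion is that the \emph{first} frog to step into a given subtree acts as that subtree's initiating frog, whereas any later frog stepping into an already-entered subtree is stopped and generates no further visits to $\emptyset'$. Consequently $u$ becomes active exactly when $F_0$ steps to $u$ or some frog from $\TT_2(v)$ steps toward $u$, and whenever it is active the visits it returns to $\emptyset'$ form the independent copy $V_u\eqd V$.

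I would then compute $\E x^V$ by conditioning on the first step of $F_0$, which goes to $\rootvertex$, $v$, or $u$ with probability $\tfrac13$ each. When $F_0$ steps to $v$ or to the root, the far subtree $u$ is activated only if at least one of the $V_v$ up-crossing frogs turns toward $u$; splitting on this event, the contribution of $\TT_2(v)$ is $f(\tfrac x2)$ when every one of the $V_v$ frogs instead crosses to the root, and $f(\tfrac{x+1}2)-f(\tfrac x2)$ otherwise, the latter multiplied by the factor $f(\tfrac{x+1}2)$ from the now-active $\TT_2(u)$. When $F_0$ steps to $u$, both subtrees are active and each contributes $f(\tfrac{x+1}2)$. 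Attaching the factor $x$ for the root visit in the case $F_0\to\rootvertex$, the three cases give
\begin{align*}
  \E x^V &= \tfrac13 x\Bigl[f\bigl(\tfrac{x+1}2\bigr)^2+f\bigl(\tfrac x2\bigr)\bigl(1-f\bigl(\tfrac{x+1}2\bigr)\bigr)\Bigr]\\
  &\qquad +\tfrac13\Bigl[f\bigl(\tfrac{x+1}2\bigr)^2+f\bigl(\tfrac x2\bigr)\bigl(1-f\bigl(\tfrac{x+1}2\bigr)\bigr)\Bigr]+\tfrac13 f\bigl(\tfrac{x+1}2\bigr)^2,
\end{align*}
and collecting the coefficients of $f(\tfrac{x+1}2)^2$ and of $f(\tfrac x2)\bigl(1-f(\tfrac{x+1}2)\bigr)$ reproduces \eqref{eq:Aadef}, i.e.\ $f=\Aa f$.

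The delicate point, which I expect to be the main obstacle, is justifying both the independence of $V_v$ and $V_u$ and the claim that a frog entering an already-active subtree produces no new visits to $\emptyset'$; this is precisely what prevents the two subtrees from feeding frogs back and forth indefinitely, and it is the content distilled from the construction in \thref{prop:selfsim}, where frogs originating outside a subtree are stopped upon entering it. Making this rigorous—together with the convention, when $F_0$ and the initiating frog reach the same child in the same step, that all but one are stopped—is the substance of the argument; the remaining generating-function algebra is routine.
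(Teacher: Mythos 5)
Your proof is correct and follows the paper's argument in all essentials: the same decomposition according to whether and how the far subtree $\TT_2(u)$ is activated (your three cases are exactly the paper's events $A$, $B$, $C$), the same appeal to \thref{prop:selfsim} for the distributional identity and for the independence of $V_v$ and $V_u$, and the same resulting algebra. The only difference is bookkeeping: by computing unnormalized expectations restricted to each event rather than normalized conditional distributions, you avoid introducing the constant $q=\E(1/2)^{V}$ and the conditioned variables the paper calls $Y$ and $Z$, which also lets you skip the paper's separate treatment of the case $\P[V=\infty]=1$.
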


 \begin{proof}

   If $\P[V= \infty] =1$ then $f \equiv 0$. This is easily checked to be a fixed point of $\Aa$. So, for the remainder of the argument suppose that $\P[V= \infty] <1$.
  
  The frog at the root in the self-similar model follows a non-backtracking path and visits one of its children and then one of this child's children;
  call these vertices $\emptyset'$ and $v$, respectively. 
  We label the yet to be visited child $u$ (see Figure~\ref{fig:ssf}). 
   Define $V_v$ and $V_u$ to be the number of frogs which visit $\emptyset'$ that were originally sleeping in $\mathbb T_2(v)$ and $\mathbb T_2(u)$, respectively.
   
    \thref{prop:selfsim} guarantees that, since $v$ has been visited, the random variable $V_v$ is distributed identically to $V$. 
   Conditionally on $u$ being visited, the random variable $V_u$ is also distributed identically to $V$.
   In fact, because frogs outside of $\TT_2(u)$ affect $\TT_2(u)$ only by determining whether or not $u$
   is visited, we can express $V_u$ as $V_u = \ind{\text{$u$ is visited}}V'$,
   where $V'$ is distributed as $V$ and is independent of $V_v$. 
   This yields a description of $V$ in terms of a pair of independent copies of itself:
   \begin{align}
V &=   \underbrace{\ind{\text{frog at $\emptyset'$ visits $\emptyset$}}}_{\text{term 1}}  +
   \underbrace{\ind{u \text{ is visited}}  \Bin(V', \tf 12)}_{\text{term 2}} + 
   \underbrace{\Bin(V_v,\tf 12)}_{\text{term 3}}  \label{eqn:keyform}.
\end{align}
Term~1 accounts for a possible visit to $\emptyset$ by the frog started at $\emptyset'$. The conditional binomial distributions in terms~2 and 3 arise because each frog that visits $\emptyset'$ from $u$ or $v$ has a $\f 12$ chance of jumping back to $\emptyset$.  
 
Despite the independence between $V_v$ and $V'$, the three terms are dependent. For example, if term~1 is zero, then term~2
 is more likely to be nonzero, since the frog at $\emptyset'$ not visiting $\emptyset$ makes it more
 likely to visit $u$.
We unearth the pairwise independence of $V_v$ and $V'$ from \eqref{eqn:keyform} by conditioning on the following three disjoint events (see Figure~\ref{fig:ABC}):
  \begin{enumerate}
     \item[$A$.] the frog starting at $\emptyset'$ visits $u$;\label{item:1}
    \item[$B$.] the frog at $\emptyset'$ does not visit $u$,\label{item:2}
      and a frog returns to $\emptyset'$ through $v$ and visits $u$;
    \item[$C$.] no frog ever visits $u$.\label{item:3}
  \end{enumerate}

   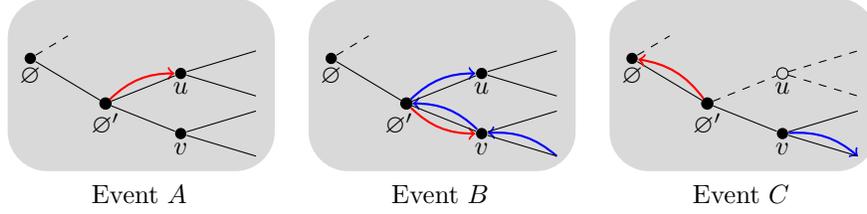
\begin{figure}
    \begin{center}
\begin{tikzpicture}[scale = 1,tv/.style={circle,fill,inner sep=0,
    minimum size=0.15cm,draw}, starttv/.style={circle,fill,inner sep=0,
    minimum size=0.15cm},walklines/.style={very thick, blue,bend left=15}]
    \begin{scope}[shift={(-4,0)}]
    \fill[black!15!white,rounded corners=15pt] (-0.3, -1.5) rectangle (3.25, .8);
    \node at (1.45,-1.8){Event $A$};
    \path (0,0) node[starttv] (R) {}
        (1,-0.6) node[tv] (L1) {}
        (1, 0.6)  node (L2) {}
        (2, -0.2) node[circle, fill, inner sep = 0, minimum size = .15cm] (L12) {}
        (2,-1) node[circle, fill, inner sep = 0, minimum size = .15cm] (L11) {}
        (L12)+(1,-0.3) coordinate (L121)
             +(1, 0.3) coordinate (L122)
        (L11)+(1,-0.3) coordinate (L111)
             +(1, 0.3) coordinate (L112);
  \draw (R)--(L1);
    \draw[->,thick, red,bend left=22] (L1) to (L12);
\node[below] at (R){$\emptyset$};
\node[below] at (L12){$u$};
\node[below] at (L11){$v$};
\node[below] at (L1){$\emptyset'$};
    \draw (L1)--(L11);
    \draw[dashed] (R) -- (.5,.3);
       \draw (L1) -- (L12);
  \draw  
       (L12)--(L122) (L12)--(L121);
   
  \draw     (L11)--(L112) (L11)--(L111);
  \end{scope}
  \begin{scope}[shift={(0,0)}]  
  \fill[black!15!white,rounded corners=15pt] (-0.3, -1.5) rectangle (3.25, .8);  
  \node at (1.45,-1.8){Event $B$};
      \path (0,0) node[starttv] (R) {}
        (1,-0.6) node[tv] (L1) {}
        (1, 0.6)  node (L2) {}
        (2, -0.2) node[circle, fill, inner sep = 0, minimum size = .15cm] (L12) {}
        (2,-1) node[circle, fill, inner sep = 0, minimum size = .15cm] (L11) {}
        (L12)+(1,-0.3) coordinate (L121)
             +(1, 0.3) coordinate (L122)
        (L11)+(1,-0.3) coordinate (L111)
             +(1, 0.3) coordinate (L112);
  \draw (R)--(L1);
    \draw[->,thick, blue,bend left=22] (L1) to (L12);
        \draw[->,thick, blue,bend right=22] (L11) to (L1);
                \draw[->,thick, blue,bend right=22] (L111) to (L11);
    \draw[->,thick, red,bend right=22] (L1) to (L11);
\node[below] at (R){$\emptyset$};
\node[below] at (L12){$u$};
\node[below] at (L11){$v$};
\node[below] at (L1){$\emptyset'$\;\;};
    \draw (L1)--(L11);
    \draw[dashed] (R) -- (.5,.3);
       \draw (L1) -- (L12);
  \draw (L12)--(L122) (L12)--(L121);
  \draw (L11)--(L112) (L11)--(L111);
  \end{scope}
\begin{scope}[shift={(4,0)}]
\fill[black!15!white,rounded corners=15pt] (-0.3, -1.5) rectangle (3.25, .8);
    \node at (1.45,-1.8){Event $C$};
      \path (0,0) node[starttv] (R) {}
        (1,-0.6) node[tv] (L1) {}
        (1, 0.6)  node (L2) {}
        (2, -0.2) node[circle, draw, inner sep = 0, minimum size = .15cm] (L12) {}
        (2,-1) node[circle, fill, inner sep = 0, minimum size = .15cm] (L11) {}
        (L12)+(1,-0.3) coordinate (L121)
             +(1, 0.3) coordinate (L122)
        (L11)+(1,-0.3) coordinate (L111)
             +(1, 0.3) coordinate (L112);
  \draw 
       (R)--(L1);
        \draw[->,thick, blue,bend left=22] (L11) to (L111);
            \draw[->,thick, red,bend right=22] (L1) to (R);
\node[below] at (R){$\emptyset$};
\node[below] at (L12){$u$};
\node[below] at (L11){$v$};
\node[below] at (L1){$\emptyset'$};
    \draw (L1)--(L11);
    \draw[dashed] (R) -- (.5,.3);
       \draw[dashed] (L1) -- (L12);
  \draw [dashed] 
       (L12)--(L122) (L12)--(L121);
   
  \draw     (L11)--(L112) (L11)--(L111);
  \end{scope}
\end{tikzpicture}
    \end{center}
    \caption{Outcomes that would result in events $A$, $B$ and $C$, respectively. The path of the frog at $\emptyset'$ is red and the path of a frog from the subtree $\TT(v)$ is blue.}
    \label{fig:ABC}
  \end{figure}

  Event~$A$ occurs with probability $1/3$.
  Given that $k$ frogs return to $\emptyset'$ through $v$, the probability of $C$
  is $(2/3)2^{-k}$. Since the number of frogs returning to $\emptyset'$ through $v$ is
  distributed identically to $V$, the probability of $C$ is
    $\frac23\E\left(\frac 12\right)^{V}$,
  which we call $2q/3$. 
  Event~$B$ then occurs with the remaining probability, which is $1-1/3-2q/3=2(1-q)/3$. 
  Note that under our assumption $\P[V=\infty] <1$ it follows that $0<q<1$. 
  
  Conditional on event~$A$, $B$, or $C$, the terms in \eqref{eqn:keyform} are independent.
  Indeed, conditioning on whether $u$ is visited makes terms~2 and~3 independent,
  and conditioning
  further on whether the frog at $\emptyset'$ visits $u$ then makes term~1 independent of the other two.
  Now, we describe the distributions of each term in \eqref{eqn:keyform} conditional on events
  $A$, $B$, and $C$. 
  For a given random variable $X$, we use $\Bin(X, p)$ to denote the random variable
  $\sum_{i=1}^X B_i$, where $\{B_i\}_{i\in\NN}$ are distributed as $\Ber(p)$,
  independent of each other and of $X$.
  
  \begin{itemize}
    \item Conditional on $A$,
      term~1 is $0$ and terms~2 and~3 are distributed as independent $\Bin(V, 1/2)$.
    \item Conditional on $B$, term~1 is $\Ber(1/2)$, term~2 is $\Bin(V,1/2)$, and term~3 is 
      $\Bin(V, 1/2)$ conditional on being strictly less than $V$ (since at least one frog will visit $u$ and not move to $\emptyset$). 
        \item Conditional on $C$, term~1 is $\Ber(1/2)$, term~2 is 0, and term~3 is $\Bin(V, 1/2)$ conditional
      on being equal to $V$ (since every frog counted by $V_v$ will return to $\emptyset$).
  \end{itemize}
  To summarize, let $X'$ and $X$ be distributed as $\Bin(V, 1/2)$.
  Let $Y$ be distributed as $\Bin(V, 1/2)$ conditional on $\Bin(V, 1/2)<V$.
  Let $Z$ be distributed as $\Bin(V, 1/2)$ conditional on $\Bin(V, 1/2)=V$.
  Let $I\sim\Ber(1/2)$. Take all of these to be independent.
  Conditioning on events $A$, $B$, and $C$, equation \eqref{eqn:keyform} yields
  \begin{align}
    V &\overset{d}= \begin{cases}
          X' + X & \text{with probability~$1/3$,}\\
          I + X' + Y & \text{with probability~$2(1-q)/3$,}\\
          I + Z & \text{with probability~$2q/3$.}
    \end{cases}\label{eq:Vrecursion}
  \end{align}
  From this description of the distribution of $V$,
  \begin{align}
  \E x^V
  &=\frac13 \E x^{X' + X }+\frac{2(1-q)}{3} \E x^{I + X' + Y}+\frac{2q}3 \E x^{I + Z} \nonumber\\
  &=\frac13 \E x^{X'}  \E x^{  X } +\frac{2(1-q)}{3} \E x^{I} \E x^{X'} \E x^Y+\frac{2q}3 \E x^{I}  \E x^Z.
    \label{cyclones}
  \end{align}

Recall that a $\Ber(p)$ random variable has generating function $px + 1-p$ and that a random sum of i.i.d.\ random variables, $\sum_1^N X_i$, has generating function $g_N(g_{X_1}(x))$, where $g_N$ and $g_{X_1}$ are the generating functions of $N$ and $X_1$. From these facts,
  \begin{align*}
    \E x^{I} &= \frac{x+1}{2},\\
    \E x^{X'}&=\E x^{X} = 
     f\biggl(\frac{x+1}{2}\biggr).
  \end{align*}
  The generating functions $\E x^Y$
  and $\E x^Z$ are a bit more complicated. 
%
The random variable $Y$ is distributed as $X$ conditional on $X <V$. Using the basic formula for conditional probability,
  \begin{align*}
    \P[Y = k] = \P[X = k\mid X<V]
      &= \frac{ \P[X = k \text{ and } X < V] }{\P[X < V] } \\
      &= \frac{\P[X = k] - \P[X = V = k]}
          {1-q}\\
      &= \frac{\P[X=k] - 2^{-k}\P[V=k]}{1-q}.
  \end{align*}
  Thus, the probability generating function of $Y$ is
  \begin{align}
    \E x^{Y} 
      &= \frac{1}{1-q}\sum_{k=0}^{\infty} x^k\bigl(\P[X=k] - 2^{-k}\P[v=k]\bigr)\nonumber\\
      &= \frac{1}{1-q}\E\Bigl[ x^{X} - \Bigl(\frac{x}{2}\Bigr)^{V}\biggr]\label{eqn:switch}\\
      &= \frac{1}{1-q}\biggl( f\Bigl(\frac{x+1}{2}\Bigr) - f\Bigl(\frac{x}{2}\Bigr)
        \biggr).\nonumber
  \end{align}
  In \eqref{eqn:switch} we are making use of the general fact that $\sum ( a_n - b_n) = \sum a_n - \sum b_n$ so long as each sum is finite. Similarly, 
  \begin{align*}
    \P[Z = k] &= \P[X = k\mid X=V]= \frac{2^{-k}\P[V=k]}{q},
  \end{align*}
  and so
  \begin{align*}
    \E x^Z &= \frac{1}{q}\sum_{k=0}^{\infty}x^k2^{-k}\P[V=k] = \frac{1}{q}f\Bigl(\frac{x}{2}\Bigr).
  \end{align*}
  
  Using all of these generating functions and (\ref{cyclones})
  \begin{align*}
    f(x) 
    &=\frac13 \E x^{X'} \E x^{  X }+\frac{2(1-q)}{3} \E x^{I} \E x^{X'} \E x^Y+\frac{2q}3 \E x^{I} \E x^Z\\
      &= \frac13 f\Bigl(\frac{x+1}{2}\Bigr)^2
        + \frac{2(1-q)}{3}\biggl(\frac{x+1}{2}f\Bigl(\frac{x+1}{2}\Bigr)
          \frac{1}{1-q}\Bigl(f\Bigl(\frac{x+1}{2}\Bigr) - f\Bigl(\frac{x}{2}\Bigr)\Bigr)\biggr)\\
          &\qquad\qquad +\frac{2q}{3}\biggl(\frac{x+1}{2q}f\Bigl(\frac{x}{2}\Bigr)\biggr)\\
          &=\frac{x+2}{3}f\Bigl(\frac{x+1}{2}\Bigr)^2 - \frac{x+1}{3}f\Bigl(\frac{x+1}{2}\Bigr)
            f\Bigl(\frac{x}{2}\Bigr) + \frac{x+1}{3} f\Bigl(\frac{x}{2}\Bigr) 
            = \Aa f (x),
  \end{align*}
  which establishes our claim.    
\end{proof}

  

  \subsection{Proving recurrence}
  We have reduced the problem to understanding the properties of the operator~$\Aa$ defined in \eqref{eq:Aadef}.
  In Lemma~\ref{sandinthevaseline}, we prove that $\Aa$ is monotonic for functions belonging to the set $\mathscr S = \{g \colon [0,1] \to [0,1], \text{ nondecreasing}\}$.
  In \thref{lem:preserveincreasing}, we show that $\Aa$ maps $\mathscr S$ into itself,
  so that we can apply Lemma~\ref{sandinthevaseline} after applying $\Aa$ iteratively.
  Finally,  we show in \thref{lem:a+ca,lem:limAa} that $\Aa^n1 \to 0$.
  Starting at the conclusion of Proposition~\ref{mustangsally} (that the generating function~$f$
  is a fixed point of $\Aa$), we will then apply these results to show
   that $f \equiv 0$,
   thus proving that the number of visits to the root in the self-similar frog model is a.s.~ infinite.

  \begin{lemma} \thlabel{sandinthevaseline}
    Let $g,h \in \mathscr S$. If $g\leq h$, then $\Aa g\leq \Aa h$.
  \end{lemma}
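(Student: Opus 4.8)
The plan is to treat $\Aa g(x)$, for each fixed $x\in[0,1]$, as a function of the two numbers $g\bigl(\tfrac{x+1}{2}\bigr)$ and $g\bigl(\tfrac{x}{2}\bigr)$, and to verify that this function is coordinatewise nondecreasing on the region into which the monotonicity of $g$ forces those two numbers. Concretely, I would fix $x\in[0,1]$, write $a=\tfrac{x+1}{2}$ and $b=\tfrac{x}{2}$, observe that $b\le a$, and define
$$
F(u,v)=\frac{x+2}{3}\,u^2+\frac{x+1}{3}\,v(1-u),
$$
so that $\Aa g(x)=F\bigl(g(a),g(b)\bigr)$ and likewise $\Aa h(x)=F\bigl(h(a),h(b)\bigr)$. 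Since $g\in\mathscr S$ is nondecreasing and $b\le a$, the pair $\bigl(g(a),g(b)\bigr)$ lies in the triangle $R=\{(u,v):0\le v\le u\le 1\}$, and the same holds for $h$.

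The core of the argument is that $F$ is nondecreasing in each coordinate on $R$. Monotonicity in $v$ is immediate: $\partial_v F=\tfrac{x+1}{3}(1-u)\ge 0$ whenever $u\le 1$, with no further constraint. Monotonicity in $u$ is the delicate part, because of the $-u$ appearing in the second summand; here the constraint $v\le u$ is exactly what saves the sign. I compute $\partial_u F=\tfrac{2(x+2)}{3}u-\tfrac{x+1}{3}v$, and on $R$ I bound $(x+1)v\le (x+1)u\le 2(x+2)u$, the last step because $x+1\le 2x+4=2(x+2)$ for every $x\in[0,1]$; hence $\partial_u F\ge 0$ on $R$.

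With this in hand the lemma follows by a two-step comparison. Writing $u_g=g(a),\,v_g=g(b)$ and $u_h=h(a),\,v_h=h(b)$, the hypothesis $g\le h$ gives $u_g\le u_h$ and $v_g\le v_h$. Then
$$
\Aa g(x)=F(u_g,v_g)\le F(u_h,v_g)\le F(u_h,v_h)=\Aa h(x),
$$
where the first inequality raises $u$ from $u_g$ to $u_h$ at fixed $v=v_g$ (the whole segment stays in $R$, since $v_g\le u_g\le u$ throughout, so $\partial_u F\ge 0$ applies), and the second raises $v$ from $v_g$ to $v_h$ at fixed $u=u_h\le 1$ (where $\partial_v F\ge 0$). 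Since $x\in[0,1]$ was arbitrary, this yields $\Aa g\le\Aa h$.

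The step I expect to be the main obstacle is precisely the sign of $\partial_u F$: the subtracted term $g\bigl(\tfrac{x+1}{2}\bigr)$ in the definition of $\Aa$ makes it look plausible that enlarging $g$ could \emph{decrease} $\Aa g$. The resolution is that the monotonicity of $g$ pins the two relevant values into the region $v\le u$, on which the quadratic term $u^2$ dominates the linear correction; the reason the hypothesis $g\in\mathscr S$ (rather than an arbitrary map into $[0,1]$) is needed is exactly to enforce this constraint.
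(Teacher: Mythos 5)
Your proof is correct and is essentially the paper's argument: both reduce the claim to the sign of the two partial derivatives of $F(u,v)=\frac{x+2}{3}u^2+\frac{x+1}{3}v(1-u)$, using $v\le u$ (forced by monotonicity of the functions in $\mathscr S$) and $u\le 1$ in exactly the same way. The only difference is that the paper moves from $(g(a),g(b))$ to $(h(a),h(b))$ along the straight-line interpolation $i_t=(1-t)g+th$ and applies the chain rule, whereas you use an axis-parallel two-step path; this is a cosmetic variation, not a different method.
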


\begin{proof}
For $0 \leq t \leq 1$ define the interpolation between $g$ and $h$ by 
$$i_t(x)=(1-t)\cdot g(x)+t\cdot h(x).$$ 
Since $\A i_0 = \A g$ and $\A i _1= \A h $ it suffices to prove that $\f d {dt}\A i_t(x) \geq 0$. 
Fix $x$ and set $a=i_t\bigl(\frac{x+1}{2}\bigr)$  and  $b=i_t\bigl(\frac{x}{2}\bigr)$ so that
$$\A i_t(x) = \frac{2+x}{3}a^2 +\frac{1+x}{3}b(1-a).$$
Define $s(a,b) = \A i_t(x)$. The chain rule implies
\begin{align*}
\frac{d}{dt}\Aa i_t(x) &= \frac{\partial }{\partial a}s(a,b) \frac{da}{dt}+ \frac{\partial }{\partial b}s(a,b)\frac{db}{dt}.
\end{align*}

To prove $\f d{dt} \A i_t \geq 0$ it suffices to prove each term in the above formula is nonnegative. 

\begin{itemize}

    \item  The assumption that $g \leq h$ implies that $\f d {dt} i_t(x) = h(x) - g(x)  \geq 0$ for all $t$ and $x$. In particular, this implies
$\frac{da}{dt},\frac{db}{dt} \geq 0$.
\item First, we compute the partials 
$$ \ \ \frac{\partial }{\partial a}s(a,b)=2a\frac{2+x}{3}-b\frac{1+x}{3} \qquad  \ \    \text{and} \qquad  \ \ 
\frac{\partial }{\partial b}s(a,b) = (1-a)\frac{1+x}{3}.
$$
As $g$ and $h$ are nondecreasing, $i_t$ is also nondecreasing in $x$ for any fixed $t$. Hence $b \leq a$. Along with the bound $a \leq 1$, this immediately implies both partials are positive.\qedhere
\end{itemize}
\end{proof}
\begin{lemma} \thlabel{lem:preserveincreasing}
  If $g \in \mathscr S$, then $\Aa g \in \mathscr S$.
\end{lemma}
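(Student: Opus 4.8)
The plan is to verify directly the two properties that define membership in $\mathscr S$ for the function $\Aa g$: that it takes values in $[0,1]$, and that it is nondecreasing. Throughout I would fix $x\in[0,1]$ and abbreviate $a=g\bigl(\tf{x+1}{2}\bigr)$ and $b=g\bigl(\tf{x}{2}\bigr)$, exactly as in the proof of \thref{sandinthevaseline}. Because $g$ is nondecreasing and $\tf{x}{2}\le\tf{x+1}{2}$, we always have the ordering $0\le b\le a\le 1$. Writing $F(x,a,b)=\tf{x+2}{3}a^2+\tf{x+1}{3}b(1-a)$, so that $\Aa g(x)=F(x,a,b)$, the whole lemma reduces to understanding the monotonicity of this single polynomial $F$ on the region $\{0\le b\le a\le 1,\ 0\le x\le 1\}$.

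For the range, nonnegativity is immediate since every factor appearing in $F$ is nonnegative on this region. For the upper bound I would record the two partial derivatives $\partial_x F=\tf13 a^2+\tf13 b(1-a)\ge 0$ and $\partial_b F=\tf{x+1}{3}(1-a)\ge 0$, which show $F$ is separately nondecreasing in $x$ and in $b$. Using $x\le 1$ and $b\le a$ then gives $F(x,a,b)\le F(1,a,a)=\tf13 a^2+\tf23 a\le 1$, the final inequality following from $a\le 1$. Hence $\Aa g(x)\in[0,1]$.

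The substance of the lemma is showing $\Aa g$ is nondecreasing, and here I would change one coordinate at a time. Given $x_1\le x_2$, define $a_i,b_i$ from $x_i$ as above; monotonicity of $g$ yields $a_1\le a_2$ and $b_1\le b_2$, with $b_i\le a_i$. First, $F(x_1,a_1,b_1)\le F(x_2,a_1,b_1)$ because $\partial_x F\ge 0$. Next I increase $a$ from $a_1$ to $a_2$ while holding $b=b_1$, and \emph{this is the step where the main obstacle appears}: we have $\partial_a F=\tf13\bigl(2a(x+2)-b(x+1)\bigr)$, which carries the cross term $-b(x+1)$ and is not obviously nonnegative. The obstacle is resolved precisely by the ordering $b\le a$. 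Along this segment $b_1\le a_1\le a$, so $2a(x+2)-b_1(x+1)\ge a\bigl(2(x+2)-(x+1)\bigr)=a(x+3)\ge 0$, giving $F(x_2,a_1,b_1)\le F(x_2,a_2,b_1)$. Finally I increase $b$ from $b_1$ to $b_2$ holding $a=a_2$; since $\partial_b F\ge 0$ this only increases $F$. Chaining the three inequalities produces $F(x_1,a_1,b_1)\le F(x_2,a_2,b_2)$, that is, $\Aa g(x_1)\le\Aa g(x_2)$.

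The appeal of this coordinatewise scheme is that it uses only that $g$ is nondecreasing: no continuity or differentiability of $g$ is needed, which matters because $\mathscr S$ contains arbitrary nondecreasing functions and the direct derivative computation $\tf{d}{dx}\Aa g$ would require smoothness. The one point that demands care is keeping the intermediate state $(a_2,b_1)$ inside the admissible region $\{b\le a\}$, so that the sign control on $\partial_a F$ applies; this holds because $b_1\le a_1\le a_2$, and it is exactly the ingredient that lets the problematic cross term be dominated.
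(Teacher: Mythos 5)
Your proof is correct and follows essentially the same route as the paper's: both get $0 \le \Aa g \le 1$ from nonnegativity of the summands and prove monotonicity by increasing the arguments of $\Aa g$ one at a time, the key point in each case being that the coefficient multiplying the increment of $g\bigl(\tfrac{x+1}{2}\bigr)$ is nonnegative precisely because $g\bigl(\tfrac{x}{2}\bigr)\le g\bigl(\tfrac{x+1}{2}\bigr)$. The only cosmetic differences are that the paper obtains the upper bound by invoking \thref{sandinthevaseline} with $h\equiv 1$ rather than bounding $F(1,a,a)$ directly, and packages the monotonicity step as a single inequality plus an algebraic identity instead of your three-leg coordinatewise path.
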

\begin{proof}
  All summands in \eqref{eq:Aadef} are nonnegative when $g(x)\leq 1$, which implies
  that $\Aa g\geq 0$. By the previous lemma,
  $\Aa g\leq \Aa 1 \leq 1$. We can conclude then that $0 \leq \A g \leq 1$. To see that $\Aa g$ is nondecreasing,
  suppose that $x\leq y$, and let $a=g\bigl(\frac{y+1}{2}\bigr)-g\bigl(\frac{x+1}{2}\bigr)$.
  Then we have
  \begin{align*}
    \Aa g(y) &\geq \frac{x+2}{3} g\Bigl(\frac{x+1}{2}\Bigr) g\Bigl(\frac{y+1}{2}\Bigr)
       + \frac{x+1}{3}g\Bigl(\frac{x}{2}\Bigr)\biggl(1-g\Bigl(\frac{y+1}{2}\Bigr)\biggr)\\
       &= \Aa g(x) + \biggl(\frac{x+2}{3}g\Bigl(\frac{x+1}{2}\Bigr) - 
         \frac{x+1}{3}g\Bigl(\frac{x}{2}\Bigr)\biggr)a \geq \Aa g(x).\qedhere
  \end{align*}
\end{proof}

We now analyze the behavior of $\Aa$ on the family of generating functions for Poisson random variables.
Recall that the generating function of $\Poi(a)$ is $e^{a(x-1)}$.
\begin{lemma}\thlabel{lem:a+ca}
   Define $g_a(x)=e^{a(x-1)}$ for all $a \geq 0$. For all $x \in [0,1]$,
    $$\Aa g_{a}(x) \leq g_{a + c_a}(x),$$
  where
  \begin{align}\label{eq:cadef}
    c_a = \begin{cases}
 \frac13 e^{-2}& 0 \leq a \leq 4,\\
 \frac13 e^{-a/2} & a \geq 4.
\end{cases}
  \end{align}  
\end{lemma}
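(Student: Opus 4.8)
The plan is to substitute $g_a(x)=e^{a(x-1)}$ into \eqref{eq:Aadef}, cancel the common exponential factor, and reduce the claim to a single scalar inequality attackable by elementary convexity. Using $g_a\bigl(\tfrac{x+1}2\bigr)=e^{a(x-1)/2}$ and $g_a\bigl(\tfrac x2\bigr)=e^{a(x-2)/2}$, the operator becomes
\[
  \Aa g_a(x)=e^{a(x-1)}\left[\frac{x+2}{3}+\frac{x+1}{3}\bigl(e^{-ax/2}-e^{-a/2}\bigr)\right],
\]
and since $g_{a+c_a}(x)=e^{a(x-1)}e^{c_a(x-1)}$, dividing by $e^{a(x-1)}$ shows the lemma is equivalent to $\frac{x+2}{3}+\frac{x+1}{3}\bigl(e^{-ax/2}-e^{-a/2}\bigr)\le e^{c_a(x-1)}$ on $[0,1]$. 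Writing $u=1-x\in[0,1]$ and using $e^{-ax/2}-e^{-a/2}=e^{-a/2}\bigl(e^{au/2}-1\bigr)$, this reads
\begin{equation*}
  1-\frac u3+\frac{2-u}{3}\,e^{-a/2}\bigl(e^{au/2}-1\bigr)\le e^{-c_a u}.\tag{$\star$}
\end{equation*}
Both sides equal $1$ at $u=0$, matching the fact that $\Aa$ fixes the value $g_a(1)=1$; everything below concerns $u>0$.

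For the upper branch $a\ge4$, where $c_a=\tfrac13e^{-a/2}$ and hence $1-3c_a=1-e^{-a/2}$, I would bound the right side of $(\star)$ below by its tangent line $e^{-c_a u}\ge 1-c_a u$ and reduce to showing $G(u)\ge0$ on $[0,1]$, where $G(u)=u\bigl(1-e^{-a/2}\bigr)-(2-u)e^{-a/2}\bigl(e^{au/2}-1\bigr)$. A direct computation gives $G(0)=G(1)=0$ and
\[
  G''(u)=-\frac a2\,e^{-a/2}e^{au/2}\Bigl(\frac{a(2-u)}2-2\Bigr),
\]
and when $a\ge4$ the factor $\tfrac{a(2-u)}2\ge\tfrac a2\ge2$, so $G''\le0$ on $[0,1]$. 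A concave function vanishing at both endpoints is nonnegative between them, which settles this regime cleanly.

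For $a\le4$, where $c_a=\tfrac13e^{-2}$, the same linearization reduces $(\star)$ to $H(u)\ge0$ with $H(u)=u\bigl(1-e^{-2}\bigr)-(2-u)e^{-a/2}\bigl(e^{au/2}-1\bigr)$, satisfying $H(0)=0$, $H(1)=e^{-a/2}-e^{-2}\ge0$, and $H'(0)=(1-e^{-2})-ae^{-a/2}>0$ (since $ae^{-a/2}\le 2/e<1-e^{-2}$). Its second derivative has the same form as $G''$, so it changes sign exactly once on $[0,1]$, at $u_0=2-\tfrac4a$; consequently $H'$ decreases and then increases, attaining its minimum at $u_0$. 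When $a\le2$ one has $u_0\le0$, so $H$ is convex and lies above its tangent at $0$, giving $H(u)\ge H'(0)u\ge0$. When $a=4$ one has $u_0=1$, so $H$ is concave on all of $[0,1]$ and $H(0)=H(1)=0$ force $H\ge0$. For $2<a<4$, since $H'(0)>0$ and $H'$ is unimodal, $H$ can only decrease after $H'$ first crosses zero past $u_0$; hence the minimum of $H$ over $[0,1]$ is attained either at $u=0$ or at the unique interior point $u_2\in(u_0,1)$ with $H'(u_2)=0$, and it suffices to show $H(u_2)\ge0$.

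The main obstacle is precisely bounding $H$ at this interior minimum $u_2$ for $2<a<4$: a convex tail can dip below zero between two nonnegative endpoint values, so knowing $H(u_0)\ge0$ and $H(1)\ge0$ is not enough. The margin is genuinely thin — at the seam $a=4$, $H$ touches zero at $u=1$ — and this tightness is exactly why $c_a$ is taken to be the conservative constant $\tfrac13e^{-2}$ rather than the larger value $\tfrac13e^{-a/2}$ that the endpoint $u=1$ alone would permit; the resulting slack $1-3c_a=1-e^{-2}$ is what keeps $H$ above zero across the interior. I would finish by the monotonicity shortcut where it applies, namely showing $H'(1)=2-e^{-2}-e^{-a/2}-\tfrac a2\le0$ so that convexity forces $H'\le0$ on $[u_0,1]$ and hence $H\ge H(1)\ge0$, and on the remaining subrange (the smaller values of $a$, where $H'(1)>0$) by a direct elementary estimate of $H(u_2)$ at the critical point $H'(u_2)=0$, using the uniform bounds on $e^{-a/2}$ and $ae^{-a/2}$ for $a\in(2,4)$ together with the slack $1-e^{-2}$.
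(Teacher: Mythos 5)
Your reduction coincides with the paper's: both substitute $g_a$ into $\Aa$, factor out $e^{a(x-1)}$, and reduce to a scalar inequality (your $(\star)$ is exactly the paper's claim $r_{a/2}(x)\le g_{c_a}(x)$ after the further linearization $e^{-c_au}\ge 1-c_au$), and both split into the regimes $a\le 2$, $a\ge 4$, $2<a<4$ according to the concavity of the non-exponential factor. Your treatment of $a\ge4$ ($G$ concave with $G(0)=G(1)=0$) and of $a\le2$ ($H$ convex with $H'(0)=1-e^{-2}-ae^{-a/2}>0$) is correct and complete; these are repackagings of the paper's secant-line and tangent-line arguments, and your computations of $G''$, $H'(0)$, $H'(1)$, and the inflection point $u_0=2-\tfrac4a$ all check out.

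The gap is in the regime $2<a<4$, which is precisely where the paper works hardest. You correctly locate the only dangerous point, the interior critical point $u_2\in(u_0,1)$ on the convex tail, and you correctly dispose of the sub-case $H'(1)\le0$. But for the remaining sub-case ($H'(1)>0$, roughly $2<a\lesssim3.35$) you only promise ``a direct elementary estimate of $H(u_2)$'' from ``uniform bounds on $e^{-a/2}$ and $ae^{-a/2}$'' and never supply it. This is not a formality: crude uniform bounds do not close the inequality, since $(2-u)e^{-a/2}(e^{au/2}-1)\to 1-e^{-a/2}$ as $u\to1$, which already nearly exhausts $u(1-e^{-2})$; one needs quantitative control on where $u_2$ sits and how far $H$ can fall after its last local maximum. (The statement is true --- numerically the interior minimum of $H$ stays about $0.05$ above zero, and for $2<a\lesssim3.1$ one even has $H'(u_0)=1-e^{-2}-e^{(a-4)/2}-e^{-a/2}\ge0$ so $H$ is monotone and there is nothing to prove --- so your plan is completable, but the decisive estimate is missing.) The paper's substitute for this step is an explicit construction: it majorizes $r$ by the line through $(1,1)$ with slope $1-r(I)+Ir'(I)$ obtained from the tangent at the inflection point $I$, and then shows this slope has no critical points for $b\in(1,2)$ via a power-series expansion of $b-1+\log(2-b)-\log b$, so its minimum $e^{-2}/3$ is attained at the endpoint $b=2$. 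You would need either to adopt some such device or to actually carry out the bound on $H(u_2)$ over $a\in(3.1,3.35)$.
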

\begin{proof}
  Applying the operator $\Aa$, we have
  \begin{equation}
    \begin{split}
      \Aa g_a(x) &=       \frac{x+2}{3}e^{a(x-1)} 
      +\frac{x+1}{3}e^{ax/2-a}\biggl(1
        -e^{a(x-1)/2}\biggr)\\
      &= g_a(x)r_{a/2}(x),
    \end{split}\label{eq:Aga}
  \end{equation}
  where 
  \begin{align*}
    r_b(x) &=\frac{2+x}{3}+\frac{1+x}{3}\left(e^{-bx}-e^{-b}\right).
  \end{align*}
  Note that $g_a(x)g_b(x)=g_{a+b}(x)$.
  It thus suffices to establish
  \begin{claim}
    For $x\in[0,1]$,
      we have
        $r_b(x) \leq g_{c_{2b}}(x)$.
  \end{claim}
  \begin{proof}[Proof of claim]
    We drop subscripts and let $r(x)=r_b(x)$ and $c=c_{2b}$.
Calculus and a little algebra show that
$$r'(x)=\frac{1}{3}\left(1-e^{-b}+e^{-bx}(-bx-b+1)\right)$$
and 
$$r''(x)=\frac{1}{3}e^{-bx}\left(b^2(x+1)-2b\right).$$
We break the proof up into cases. 
\begin{itemize}
\item If $b\leq 1$ then $r(x)$ is concave down on $[0,1]$ and the graph of $r(x)$ lies below its 
tangent line at $x=1$. Thus 
\begin{align*}
  r(x) \leq 1+r'(1)(x-1) &=1+\frac13\bigl[1-2be^{-b}\bigr](x-1)\\
    &\leq \exp\biggl[\frac13 \bigl(1-2be^{-b}\bigr)(x-1)\biggr].
\end{align*}
It is easily verified that $\frac13\bigl(1-2be^{-b}\bigr)\geq \frac13\bigl(1-2e^{-1}\bigr)
  \geq e^{-2}/3$ for $b\leq 1$ and hence that
$r(x)\leq g_c(x)$.
  
\item If $b\geq 2$ then $r(x)$ is concave up on $[0,1]$ and the graph of $r(x)$ lies below the 
secant line between $(0,r(0))$ and $(1,r(1))$. Thus as $r(1)=1$ we have
\begin{align*}
  r(x) \leq 1+(1-r(0))(x-1) &=1+\frac13e^{-b}(x-1)\\
   &\leq \exp\biggl[\frac13 e^{-b}(x-1)\biggr]=g_c(x).
\end{align*}

\item If $1<b<2$ then there is a unique inflection point at $I=\frac2b-1$ where $r$ switches from concave down to concave up.
  Since $r$ is concave up on $[I,1]$, the graph of $r$ lies 
  below the line connecting $(1,1)$ to $(I,r(I))$. Since $r$ is concave down on $[0,I]$, 
  to the left of $I$ the graph of $r$ 
  lies below its tangent line at $(I,r(I))$. Thus the line segment from $(I,r(I))$ to 
$(0,r(I)-r'(I)I)$ lies above $r$, as in Figure~\ref{fig:inflection}.
Therefore $r$ lies below the line between (1,1) and $(0,r(I)-r'(I)I)$, and
  \begin{align}
    r(x)\leq 1+(1-r(I)+Ir'(I))(x-1).\label{eq:rbound}
  \end{align}
  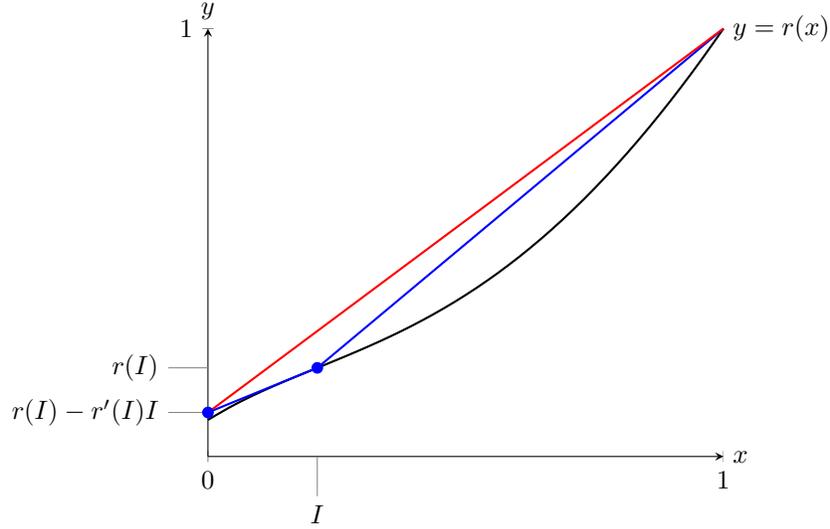
\begin{figure}
    \begin{center}
      \begin{tikzpicture}
        \begin{axis}[
          axis x line=bottom,
          axis y line=left,
          xtick={0, 1},
          ytick={1},
          extra x ticks={.2121},
          extra x tick labels={$I$},
          extra y ticks={.9372, .9445},
          extra y tick labels={$r(I)-r'(I)I$, $r(I)$},
          extra tick style={tickwidth=15pt, tick align=outside},
          xlabel=$x$,
          every axis x label/.append style={at=(current axis.right of origin),anchor=west},
          ylabel=$y$,
          every axis y label/.append style={at=(current axis.above origin), anchor=south,rotate=-90},
          domain=0:1,
          xmin=0, xmax=1,
          ymin=.93, ymax=1,
          smooth,
          clip=false
          ]
          \addplot[thick,mark=none] gnuplot{(2+x)/3 + (1+x) / 3 * (exp(-1.65 * x) - exp(-1.65))};
          \addplot[only marks,blue] coordinates{ (.2121, .9445) };
          \node[right] at (axis cs:1, 1) {$y=r(x)$};
          \draw[thick, blue] (axis cs: 1, 1)--(axis cs: .2121,.9445)--(axis cs: 0,.9372);
          \addplot[only marks, blue] coordinates{ (0, .9372) };
          \draw[thick, red] (axis cs: 1,1) -- (axis cs: 0, .9372);
        \end{axis}
      \end{tikzpicture}
      
    \end{center}
    \caption{Above the graph of $y=r(x)$ sits the secant line
    from $(I, r(I))$ to $(1,1)$ and the tangent line to $r(x)$ at $x=I$, both depicted in blue.
    Above them in red is the line $y=1+(1-r(I)+Ir'(I))(x-1)$.}\label{fig:inflection}
  \end{figure}

  Next, we evaluate 
  \begin{align} \label{eq:lhp}
    1-r(I)+Ir'(I) &= 1 - \frac13\biggl(2 + \Bigl(\frac{4}{b}-1\Bigr) e^{b-2} - e^{-b}\biggr)
  \end{align}
  and try to bound this expression from below for $b\in(1,2)$.
  We proceed as calculus students, looking for critical points in this interval.
  The derivative with respect to $b$ is
  \begin{align*}
    -\frac13\biggl(\Bigl(\frac{4}{b}-1-\frac{4}{b^2}\Bigr)e^{b-2} + e^{-b}\biggr),
  \end{align*}
  and a bit of algebra shows that the zeros of this expression are the solutions to
  \begin{align*}
    e^{2(b-1)} \biggl(\frac{2-b}{b}\biggr)^2 = 1.
  \end{align*}
  Taking logarithms, we are interested in solutions to
  \begin{align*}
    b-1 + \log(2-b) - \log b = 0.
  \end{align*}
  on $(1,2)$. On this interval we can replace the logarithms with their power series
  expansions around~$1$ to rewrite the left-hand side as
  \begin{align*}
    b - 1 + 2\biggl(\frac{(b-1)^2}{2} + \frac{(b-1)^4}{4} + \frac{(b-1)^6}{6}+\cdots\biggr),
  \end{align*}
  which is strictly positive for $b\in(1,2)$.
  Thus \eqref{eq:lhp} has no critical values on $(1,2)$, and its minimum on $[1,2]$
  is $e^{-2}/3$, occurring at $b=2$. Applying this to \eqref{eq:rbound}, we have shown that
  \begin{align*}
     r(x)\leq 1+\frac13 e^{-2}(x-1)  \leq \exp\biggl[\frac13 e^{-2}(x-1)\biggr]=g_c(x).
  \end{align*}  
\end{itemize}
This concludes the proof of both the claim and the lemma.
  \end{proof}\renewcommand{\qedsymbol}{}
\end{proof}
\begin{remark}
  Though the preceding lemma was an exercise in calculus, it has a probabilistic intepretation.
If we think of $\Aa$ as acting directly on distributions instead of on their generating functions,
this lemma shows that the result of applying $\Aa$ to $\Poi(a)$ is larger than $\Poi(a+c_a)$
in the probability generating function stochastic order described 
at the beginning of Section~\ref{sec:recurrence}. The reason that $\Aa g_a$ simplifies
so nicely in \eqref{eq:Aga} is the Poisson thinning property, and the fact that $g_a(x)g_b(x)=g_{a+b}(x)$
is just the statement that the sum of independent Poissons is Poisson. There is a temptation
to interpret $\Aa g_a(x) = g_a(x) r_{a/2}(x)$ as saying that the distribution resulting from
applying $\Aa$ to $\Poi(a)$ is a convolution of $\Poi(a)$ and another distribution, but $r_{a/2}(x)$
is not monotone in $x$ and hence not the generating function of a probability distribution.
\end{remark}

\begin{lemma}\thlabel{lem:limAa}
  For $x\in[0,1)$,
  \begin{align*}
    \lim_{n\to\infty} \Aa^n g_0(x) = 0.
  \end{align*}
\end{lemma}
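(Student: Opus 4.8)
The plan is to show that iterating $\Aa$ on the constant function $g_0 \equiv 1$ produces generating functions dominated by those of Poisson distributions whose means tend to infinity, and whose generating functions therefore vanish on $[0,1)$. Everything rests on combining the three preceding lemmas with a suitable bookkeeping sequence.

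First I would define a sequence of means by $a_0 = 0$ and $a_{n+1} = a_n + c_{a_n}$, with $c_a$ as in \eqref{eq:cadef}, and prove the domination $\Aa^n g_0 \le g_{a_n}$ for every $n$ by induction. The base case is the equality $\Aa^0 g_0 = g_0 = g_{a_0}$. For the inductive step, assume $\Aa^n g_0 \le g_{a_n}$. Both functions lie in $\mathscr S$: the left side by iterating \thref{lem:preserveincreasing} starting from $g_0 \equiv 1 \in \mathscr S$, and the right side because each Poisson generating function $g_a(x) = e^{a(x-1)}$ is nondecreasing with values in $(0,1]$ on $[0,1]$. Hence \thref{sandinthevaseline} applies and yields $\Aa^{n+1} g_0 = \Aa(\Aa^n g_0) \le \Aa g_{a_n}$, and then \thref{lem:a+ca} gives $\Aa g_{a_n} \le g_{a_n + c_{a_n}} = g_{a_{n+1}}$, completing the induction.

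The second step is to show $a_n \to \infty$, and I expect this to be the one genuinely delicate point, since the increments $c_{a_n}$ decay exponentially in $a_n$ and one might worry they are summable. Because $c_a > 0$ for every $a$, the sequence $(a_n)$ is strictly increasing and hence converges to some $L \in (0,\infty]$. To rule out $L < \infty$, I would use that $c_a$ is continuous and non-increasing in $a$ (constant on $[0,4]$, decreasing on $[4,\infty)$, matching at $a=4$): if $L < \infty$, then $a_n \le L$ forces $c_{a_n} \ge c_L > 0$ for all $n$, contradicting the fact that the increments of a convergent sequence must tend to $0$. Therefore $L = \infty$. (Heuristically the recursion behaves like the ODE $da/dn = \tfrac13 e^{-a/2}$, whose solution grows like $2\log n$, confirming slow but genuine divergence.)

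The final step is routine. For fixed $x \in [0,1)$ we have $x - 1 < 0$, so $g_{a_n}(x) = e^{a_n(x-1)} \to 0$ as $a_n \to \infty$. Since $\Aa^n g_0 \in \mathscr S$ takes nonnegative values, the sandwich $0 \le \Aa^n g_0(x) \le g_{a_n}(x)$ together with the squeeze theorem gives $\Aa^n g_0(x) \to 0$, which is the claim.
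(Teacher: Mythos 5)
Your proposal is correct and follows essentially the same route as the paper: the same recursively defined sequence $a_0=0$, $a_{n+1}=a_n+c_{a_n}$, the same inductive domination $\Aa^n g_0\leq g_{a_n}$ via \thref{sandinthevaseline}, \thref{lem:preserveincreasing}, and \thref{lem:a+ca}, and the same contradiction argument (a finite limit for $a_n$ would force the increments $c_{a_n}$ to stay bounded away from zero) to conclude $a_n\to\infty$. Your added observations about monotonicity of $c_a$ and the ODE heuristic are fine but not needed beyond what the paper records.
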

\begin{proof}
  Define the sequence $a_n$ by $a_0=0$ and $a_{n+1} = a_n+c_{a_n}$.
  By \thref{sandinthevaseline,lem:preserveincreasing,lem:a+ca},
  \begin{align*}
    \Aa^n g_0(x) &\leq g_{a_{n}}(x) = e^{a_n(x-1)}.
  \end{align*}
  We need to show that $a_n\to\infty$ as $n\to\infty$.
  Suppose this does not hold.
  Since the sequence is increasing, $a_n\to a$ for some constant~$a$.
  Looking back at \eqref{eq:cadef}, this implies that $c_{a_n}$ converges to a strictly positive limit.
  We can then choose $n$ sufficiently large that $a_n + c_{a_n}>a$,
  a contradiction.
\end{proof}

\begin{proof}[Proof of \thref{thm:2tree}~(\ref{d2})]
Let $f$ be the generating function $f(x)= \E x^V$ with $V$ the number of visits to the root in the self-similar model frog model on the binary tree. By Proposition~\ref{mustangsally} we know that $f$ satisfies the recursion relation $\Aa f=f$. Since $f$ is a probability generating function, it satisfies
$f(x)\leq 1 =g_0(x)$ for $x \in[0, 1]$. 
Proposition~\ref{mustangsally} and \thref{sandinthevaseline,lem:preserveincreasing}
imply $f(x)\leq \Aa^ng_0(x)$ for all~$n$. By \thref{lem:limAa}, $f$ is identically zero on $[0,1)$. Thus the probability of any finite number of returns to the root is 0. This implies there are a.s.\ infinitely many returns to the root in the self-similar model. By the coupling in Proposition~\ref{kermit} each return in the self-similar model corresponds to a distinct return in the frog model. So, the frog model on the binary tree is a.s.\ recurrent.
\end{proof}

\section{Transience for $d \geq 5$} \label{sec:transience}
The non-backtracking model was useful in the previous section because it was dominated by the
usual frog model but was still recurrent. To prove transience, we instead seek processes that
dominate the frog model and can be proven transient. For example, consider a branching random walk on $\TT_d$ whose particles split in two at every step. 
Let $C_n$ be the $n$th Catalan number, which is the number of Dyck paths of length~$2n$. 
By a union bound, the probability that any of the $2^{2n}$ particles at time~$2n$ are at the root is at most
\[
2^{2n} \left( \f{ d}{d+1} \right)^n\left(\f 1 { d+1}  \right)^n C_n = O\left( \left(  16d/(d+1)^2 \right)^n \right).
\]
 When $d\geq 14$, this quantity is summable, and hence the branching random walk visits the root finitely many times. As this walk can be naturally coupled to the frog model so that every awake frog has a corresponding particle, this proves that the frog model is transient for $d\geq 14$.

In this section, we will present a series of refinements to this argument to ultimately prove \thref{thm:5tree}~(\ref{d5}).
In \thref{prop:6tree}, we use a branching random walk on the integers and martingale techniques to prove transience for $d\geq6$. 
We use this argument as a base for our proofs of \thref{prop:5.5tree}, transience 
on the deterministic tree which alternates between five and six children,
and \thref{thm:5tree}~(\ref{d5}), transience for $d\geq 5$.
Both proofs use a multitype branching random walk. 
We included \thref{prop:5.5tree} because its calculations can
easily be done by hand. In \thref{thm:5tree}~(\ref{d5}), on the other hand,
we use a branching random walk with 27 types. The necessary calculations
are intractable by hand, but they take only a few seconds on a computer. To get started we first address some difficulties that arise from reflection at the root. In doing so we also prove the 0-1 law described in \thref{thm:01law}.

\subsection{Couplings and 0-1 law}\label{sect:01}
We will need to consider the frog model on several modifications of a rooted tree. We can handle these special cases all at once by working in a more general setting. Let $\Tr$ be any infinite rooted graph and $H$ any graph. Enumerate finitely or countably many copies of $\Tr$ by $\Tr(i)$, and form a graph $G$ by adding an edge from the root of each $\Tr(i)$ into $H$. 
Our next lemma shows that regardless of the number of sleeping frogs placed on $H$, a frog model is less transient on $G$ than on $\Tr$.
Our motivation is the case when $\Lambda = \mathbb T_d$,
as in Corollaries~\ref{cor:homtree} and~\ref{cor:clearedtree}.
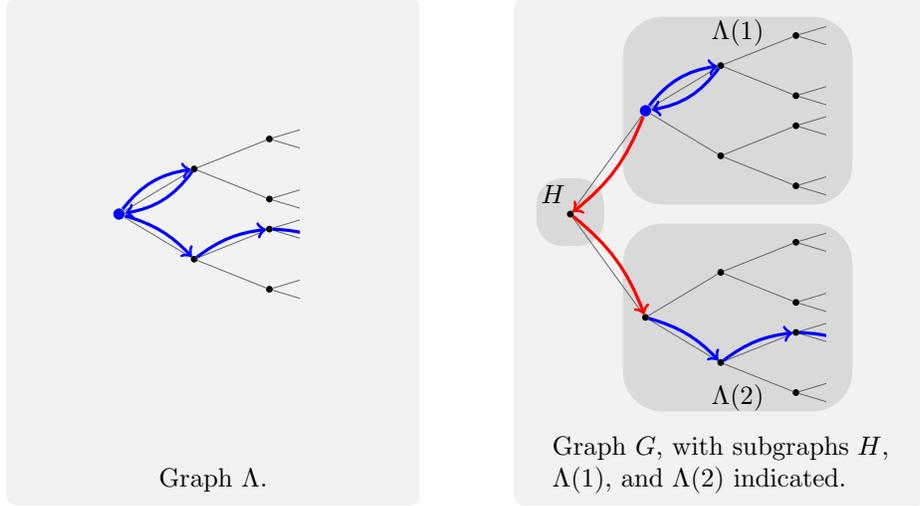
\begin{figure}
    \begin{center}
\begin{tikzpicture}[treelines/.style={very thin, black!60!white}, walklines/.style={very thick, blue,bend left=15},tv/.style={circle,fill,inner sep=0,
    minimum size=0.075cm,draw}, starttv/.style={circle,fill=blue,inner sep=0,
    minimum size=0.15cm}]
  \fill[black!05!white,rounded corners] (-1.5, -5.25) rectangle (4, 1.5);
      \node[anchor=base,align=left] at (1.25,-5) {Graph $\Lambda$.} ;
  \begin{scope}[shift={(0,-1.375)}]
  \path (0,0) node[starttv] (R) {}
        (1,-0.6) node[tv] (L1) {}
        (1, 0.6) node[tv] (L2) {}
        (2, 1) node[tv] (L22) {}
        (2, 0.2) node[tv] (L21) {}
        (2, -0.2) node[tv] (L12) {}
        (2,-1) node[tv] (L11) {}
        (L22)+(1,-0.3) coordinate (L221)
             +(1, 0.3) coordinate (L222)
        (L21)+(1,-0.3) coordinate (L211)
             +(1, 0.3) coordinate (L212)
        (L12)+(1,-0.3) coordinate (L121)
             +(1, 0.3) coordinate (L122)
        (L11)+(1,-0.3) coordinate (L111)
             +(1, 0.3) coordinate (L112);
  \clip (-1, -1.2) rectangle (2.4, 1.2);
  \draw[treelines] 
       (R)--(L1) (R)--(L2)
       (L1)--(L11) (L1)--(L12)
       (L2)--(L21) (L2)--(L22)
       (L22)--(L222) (L22)--(L221)
       (L21)--(L212) (L21)--(L211)
       (L12)--(L122) (L12)--(L121)
       (L11)--(L112) (L11)--(L111);
    \begin{scope}[walklines, ->]
      \draw[bend left=22] (R) to (L2);
      \draw[bend left=22] (L2) to (R);
      \draw (R) to (L1);
      \draw (L1) to (L12);
      \draw (L12) to (L121);
    \end{scope}
  \end{scope}

  \begin{scope}[shift={(7, 0)}]
    \fill[black!05!white,rounded corners] (-1.75, -5.25) rectangle (3.75, 1.5);
          \node[anchor=base,align=left] at (1,-5) {Graph $G$, with subgraphs $H$,\\
           $\Lambda(1)$, and $\Lambda(2)$ indicated.} ;
    \begin{scope}
      \fill[black!15!white,rounded corners=15pt] (-0.3, -1.25) rectangle (2.75, 1.25);
      \node[anchor=base] at (1.225,0.95) {$\Lambda(1)$} ;
      \path (0,0) node[starttv] (R) {}
        (1,-0.6) node[tv] (L1) {}
        (1, 0.6) node[tv] (L2) {}
        (2, 1) node[tv] (L22) {}
        (2, 0.2) node[tv] (L21) {}
        (2, -0.2) node[tv] (L12) {}
        (2,-1) node[tv] (L11) {}
        (L22)+(1,-0.3) coordinate (L221)
             +(1, 0.3) coordinate (L222)
        (L21)+(1,-0.3) coordinate (L211)
             +(1, 0.3) coordinate (L212)
        (L12)+(1,-0.3) coordinate (L121)
             +(1, 0.3) coordinate (L122)
        (L11)+(1,-0.3) coordinate (L111)
             +(1, 0.3) coordinate (L112);
      \clip (-1, -1.2) rectangle (2.4, 1.2);
      \draw[treelines] 
         (R)--(L1) (R)--(L2)
         (L1)--(L11) (L1)--(L12)
         (L2)--(L21) (L2)--(L22)
         (L22)--(L222) (L22)--(L221)
         (L21)--(L212) (L21)--(L211)
         (L12)--(L122) (L12)--(L121)
         (L11)--(L112) (L11)--(L111);
    \end{scope}

    \begin{scope}[shift={(0,-2.75)}]
      \fill[black!15!white,rounded corners=15pt] (-0.3, -1.25) rectangle (2.75, 1.25);
      \node[anchor=base] at (1.225,-1.15) {$\Lambda(2)$} ;
      \path (0,0) node[tv] (S) {}
        (1,-0.6) node[tv] (M1) {}
        (1, 0.6) node[tv] (M2) {}
        (2, 1) node[tv] (M22) {}
        (2, 0.2) node[tv] (M21) {}
        (2, -0.2) node[tv] (M12) {}
        (2,-1) node[tv] (M11) {}
        (M22)+(1,-0.3) coordinate (M221)
             +(1, 0.3) coordinate (M222)
        (M21)+(1,-0.3) coordinate (M211)
             +(1, 0.3) coordinate (M212)
        (M12)+(1,-0.3) coordinate (M121)
             +(1, 0.3) coordinate (M122)
        (M11)+(1,-0.3) coordinate (M111)
             +(1, 0.3) coordinate (M112);
      \clip (-1, -1.2) rectangle (2.4, 1.2);
      \draw[treelines] 
         (S)--(M1) (S)--(M2)
         (M1)--(M11) (M1)--(M12)
         (M2)--(M21) (M2)--(M22)
         (M22)--(M222) (M22)--(M221)
         (M21)--(M212) (M21)--(M211)
         (M12)--(M122) (M12)--(M121)
         (M11)--(M112) (M11)--(M111);
       \draw[walklines,->] (M12) to (M121);

    \end{scope}
    \fill[black!15!white,rounded corners=10pt] (-1.45, -1.8) rectangle (-0.55, -0.9);
    \node[anchor=base] at (-1.225,-1.225) {$H$} ;

    \path (-1, -1.375) node[tv] (negative) {};
    \draw[treelines] (R)--(negative) (S)--(negative);
    \begin{scope}[walklines, ->]
      \draw[bend left=22] (R) to (L2);
      \draw[bend left=22] (L2) to (R);
      \draw[red] (R) to (negative);
      \draw[red] (negative) to (S);
      \draw (S) to (M1);
      \draw (M1) to (M12);
    \end{scope}
    
  \end{scope}  
\end{tikzpicture}
    \end{center}
    \caption{The paths of frog~$x$ in $\Lambda$ and frog~$x'$ in $G$. Frog~$x$ follows the blue
    steps of $x'$ and ignores the red steps.}
    \label{fig:frogpathcoupling}
  \end{figure}
  
\begin{lemma}\thlabel{lem:Gcoupling}
  We consider two frog models. The first is on $\Tr$ with i.i.d.-$\nu$ initial conditions, for any
  measure $\nu$ on the nonnegative integers.
  The second is on $G$ with the following initial conditions: one initially active frog at the root
  of $\Tr(1)$; i.i.d.-$\nu$ sleeping frogs at all other vertices of $\bigcup_i \Tr(i)$; and
  any configuration of sleeping frogs in $H$.
  Assume $H$ is such that a random walk on $G$ a.s.\ escapes $H$. 
  
  Let $V_G$ be the number of times the root of any $\Tr(i)$ is visited in the frog model on $G$, 
  not counting steps from $H$ to a root. Let
  $V_{\Tr}$ be the the number of times the root is visited in the frog model on $\Tr$.
  Then the two frog models can
  be coupled so that $V_G \geq V_{\Tr}$.
\end{lemma}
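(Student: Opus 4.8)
The plan is to prove the lemma in two stages. First I would couple a single simple random walk on $G$ started at a vertex of $\Tr(1)$ with a single simple random walk on $\Tr$, by deleting the walk's excursions into $H$ and identifying the copies of $\Tr$ with one another; this is the coupling depicted in Figure~\ref{fig:frogpathcoupling}, in the spirit of the construction in Proposition~\ref{kermit}. Second, I would lift this coupling of walks to a coupling of the two frog models, exploiting that the set of awakened frogs and the number of visits to the root are determined by the frogs' trajectories as sequences of sites alone, not by the timing of their jumps.

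For the walk coupling, write $v^{(i)}$ for the copy of a vertex $v\in\Tr$ inside $\Tr(i)$, and let $\pi\colon\bigcup_i\Tr(i)\to\Tr$ be the projection that forgets the copy index. Given a simple random walk $(\tilde W_m)$ on $G$ started at $v^{(1)}$, I would define its projection $(W_n)$ on $\Tr$ by deleting every maximal excursion into $H$, applying $\pi$, and collapsing consecutive occurrences of a root to a single one. Since a copy is entered or left only through its root, each deleted $H$-excursion both begins and ends at a root, so this is well defined; and since the walk a.s.\ escapes $H$, it spends infinitely many steps in $\bigcup_i\Tr(i)$ and all its $H$-excursions are finite, so $(W_n)$ is an infinite path. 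By the strong Markov property and the symmetry of $G$ across copies, $(W_n)$ is a simple random walk on $\Tr$: at an interior vertex the projection faithfully copies the step, while at a root the one extra neighbor in $G$ is the edge into $H$, and deleting the ensuing $H$-excursions removes exactly this extra option, leaving the walk to exit the root to a uniformly random $\Tr$-neighbor. The bookkeeping I need is that the number of visits of $(W_n)$ to the root of $\Tr$ equals the number of visits of $(\tilde W_m)$ to a root of some $\Tr(i)$ that are \emph{not} reached by a step out of $H$: two roots are never adjacent in $G$, so the collapsed (hence uncounted) root occurrences are exactly the ones entered from $H$, which is precisely the class excluded from $V_G$.

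For the second stage I would use that in a frog model with prescribed trajectories a frog is awakened if and only if its home vertex is reachable from the root in the digraph carrying an arc $u\to w$ whenever some frog seated at $u$ has its trajectory pass through $w$; this description is blind to the relative speeds of the frogs, so it applies verbatim to the projected $\Tr$-trajectories even though they run on a different clock than the $G$-walks. I would then construct the $\Tr$-model inside the $G$-model by a breadth-first exploration started at $\rootvertex^{(1)}$: whenever the trajectory of an already-awakened $G$-frog first enters an undiscovered fiber $\pi^{-1}(w)$ at some copy $w^{(j)}$, I declare $w$ awakened in $\Tr$, attach to it the frogs seated at $w^{(j)}$, and assign those frogs the $\Tr$-trajectories obtained by projecting their $G$-walks. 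Distinct vertices of $\Tr$ have disjoint fibers, so the map $v\mapsto v^{(j(v))}$ is injective and no $G$-frog is used twice. Granting that the extracted process is a genuine frog model on $\Tr$ with i.i.d.-$\nu$ initial conditions (the delicate point, below), its awakened frogs are matched injectively to awakened $G$-frogs, and by the bookkeeping above each matched frog makes exactly as many non-$H$ root-visits in $G$ as its partner makes root-visits in $\Tr$. Summing over matched frogs and discarding the unmatched $G$-frogs in the other copies and in $H$ yields $V_G\ge V_\Tr$.

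The main obstacle is precisely the copy-mixing that forces the exploration above to be adaptive. A walk on $G$ started in $\Tr(1)$ wanders through all the copies, so the naive coupling tying the $\Tr$-frog at $v$ to the fixed frog $v^{(1)}$ fails: a reachability chain in $\Tr$ would jump to an arbitrary copy $w^{(j)}$, whose $G$-frog carries an independent walk unrelated to the one driving the chain, and the chain would not propagate in $G$. The adaptive assignment repairs this, and the technical heart of the argument is the fresh-randomness claim that legitimizes it: with respect to the filtration generated by the exploration, one must verify that the walk and the frog count at $w^{(j)}$ are revealed for the first time at the instant $w$ is discovered—since $w^{(j)}$ is located only through the trajectory of a \emph{different}, previously explored frog—so that the extracted trajectories and counts are mutually independent with the correct $\nu$ and simple-random-walk marginals. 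I expect this measurability bookkeeping, rather than any one inequality, to be the crux.
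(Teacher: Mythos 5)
Your proof is correct and follows essentially the same route as the paper's: project each $G$-frog's walk onto $\Tr$ by deleting its $H$-excursions and identifying the copies, then couple the two models frog-by-frog along the wake-up genealogy. The paper's version is much terser, and your adaptive fiber assignment and fresh-randomness bookkeeping simply make explicit what the paper compresses into ``couple the newly awoken frogs to each other as with $x$ and $x'$.''
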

\begin{proof}

Have the frog $x$, awake at the root $\emptyset \in \Tr$, mime the 
frog $x'$ that starts at the root of $\Tr(1)$. As depicted in Figure~\ref{fig:frogpathcoupling}, 
whenever $x'$ enters $H$ the frog $x$ pauses at $\emptyset$; when $x'$ re-enters  any $\Tr(i)$, the frog $x$ begins following $x'$ again. 

When $x$ visits a vertex that has yet to be visited, so will $x'$. Couple the number of sleeping frogs at
the vertices occupied by $x$ and $x'$, and couple the newly awoken frogs to each other as with
$x$ and $x'$. 
In this way, $x$ and all descendants on $\Tr$ perform simple random walks coupled to a frog on some $\Tr(i)$. 
Thus, every visit to the root in $\Tr$ corresponds to a visit to level~$0$ in $G$,
showing that $V_G\geq V_{\Tr}$ under this coupling.
\end{proof}

We give two corollaries. The first will help us prove our transience results,
and the second will help us prove a $0$-$1$ law for transience
and recurrence.
\begin{cor}\thlabel{cor:homtree}
  Consider the frog model on the $(d+1)$-homogeneous tree $\Thom$ starting
  with a single active frog at the root, and with no sleeping frog at direct
  ancestors of the root.   
  If level~$0$ is almost surely visited finitely many times in this model, then
  the frog model on $\TT_d$ is almost surely transient.
\end{cor}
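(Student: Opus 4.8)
The plan is to recognize the homogeneous tree $\Thom$ as an instance of the graph $G$ appearing in \thref{lem:Gcoupling} with $\Tr = \TT_d$, and then to read transience off the coupling $V_G \ge V_{\TT_d}$ that the lemma provides. Since deleting a single edge of $\Thom$ leaves a copy of $\TT_d$ on each side, $\Thom$ is naturally built out of copies of $\TT_d$, and the only real work is to arrange the decomposition so that the roots of those copies sit exactly on the level-$0$ horocycle; that is what makes the lemma's counting variable $V_G$ the number of visits to level $0$.

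First I would fix the end of $\Thom$ pointing through the direct ancestors of the root and use the associated Busemann (horocycle) function to assign each vertex an integer level, normalized so that the root $\emptyset$ has level $0$, each vertex's parent is one level higher, and its $d$ children are one level lower. I would set $H$ to be the set of vertices of level $\ge 1$, and for every level-$0$ vertex $v$ let $\TT_d(v)$ be the subtree of its descendants (all of level $\le 0$). Each $\TT_d(v)$ is a copy of $\TT_d$ rooted at $v$; the level-$\le 0$ vertices partition into these copies; and each copy's root is joined to $H$ by the single edge to its level-$1$ parent. This exhibits $\Thom$ as a graph $G$ of the required form with $\Tr(1) = \TT_d(\emptyset)$.

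Next I would match initial conditions. Taking $\nu = \delta_1$, I place one active frog at $\emptyset$, one sleeping frog at every vertex of level $\le 0$, and on $H$ (where \thref{lem:Gcoupling} permits any configuration) one sleeping frog at every vertex except the direct ancestors $a_1, a_2, \dots$ of the root. The resulting frog model on $G$ is then literally the model on $\Thom$ in the statement. Because the roots of the copies $\Tr(i)$ are precisely the level-$0$ vertices, the quantity $V_G$ is bounded above by the total number of visits to level $0$; the lemma then couples the two models so that $V_{\TT_d} \le V_G$. Hence finiteness of the level-$0$ visits forces $V_{\TT_d} < \infty$ almost surely, which is transience on $\TT_d$.

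The two points needing care are the hypothesis of \thref{lem:Gcoupling} and the level bookkeeping. For the hypothesis I would verify that simple random walk on $\Thom$ a.s.\ escapes $H$: the level process is a nearest-neighbor walk with drift $\tfrac{1}{d+1} - \tfrac{d}{d+1} = \tfrac{1-d}{d+1} < 0$, so the level tends to $-\infty$ and the walk visits $\{\mathrm{level} \ge 1\}$ only finitely often. The main obstacle I anticipate is simply choosing the decomposition so that the copies are rooted on the level-$0$ horocycle rather than, say, hung off the ancestor ray (which would scatter the copy-roots across many levels and break the identification of $V_G$ with level-$0$ visits). Once the horocycle decomposition is pinned down, the remainder is a direct appeal to the coupling lemma.
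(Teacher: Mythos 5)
Your proof is correct and follows essentially the same route as the paper, which likewise views $\Thom$ as the graph $G$ of \thref{lem:Gcoupling} with $\Tr=\TT_d$, the copies rooted on the level-$0$ horocycle, and $H$ the subgraph of ancestor levels. You simply make explicit two details the paper leaves implicit (the horocycle bookkeeping and the verification that the walk a.s.\ escapes $H$), and your choice of sign for the levels is immaterial.
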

\begin{proof}
  Let $G=\Thom$, thinking of it as countably
  many copies of $\TT_d$ each joined at its root to a leaf of the infinite graph consisting of all the
  negative levels of $G$. The statement then follows immediately from \thref{lem:Gcoupling}.
\end{proof}
\begin{cor}\thlabel{cor:clearedtree}
  Run the frog model on $\TT_d$, 
  starting with an active frog not at the root but at level~$k$.
  Assume that there are no sleeping frogs at levels $0,\ldots,k-1$ and i.i.d.-$\nu$ sleeping frogs
  at level~$k$ and beyond, with the exception of the location of the initial frog.
  The probability that the root is visited infinitely often in this
  model is at least the probability that the root is visited infinitely often in the usual
  frog model on $\TT_d$ with i.i.d.-$\nu$ initial conditions.
\end{cor}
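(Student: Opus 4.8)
\emph{Plan.} The plan is to deduce this from \thref{lem:Gcoupling} by choosing the ingredients correctly, and then to add a short argument transporting visits at level~$k$ down to the root. I would take $\Lambda=\TT_d$, let $H$ be the finite subtree on levels $0,\dots,k-1$, and let the copies $\Lambda(1),\dots,\Lambda(d^k)$ be the $d^k$ subtrees rooted at the level-$k$ vertices, each attached to $H$ through its level-$(k-1)$ parent; reassembling them returns $G=\TT_d$. Putting the empty configuration on $H$ (permitted, since $H$ may carry any sleeping configuration) and one active frog at the root of $\Lambda(1)$ reproduces exactly the cleared model in the statement. Since $H$ is finite and the walk on $\TT_d$ is transient for $d\ge 2$, it escapes $H$ almost surely, so \thref{lem:Gcoupling} applies and produces a coupling with $V_G\ge V_\Lambda$, where $V_\Lambda$ is the number of root visits in the usual i.i.d.-$\nu$ model and $V_G$ counts visits to the level-$k$ vertices (the roots of the copies, ignoring the down-steps from $H$, which only makes $V_G$ smaller). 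In particular $\P[V_G=\infty]\ge\P[V_\Lambda=\infty]$, the recurrence probability on the right-hand side.

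\emph{From level $k$ to the root.} Next I would bridge from level~$k$ to level~$0$. Because there are only $d^k$ vertices at level~$k$, the event $\{V_G=\infty\}$ forces some fixed level-$k$ vertex $w$ to be visited infinitely often. The key claim is that for any vertex $u$ at level at least~$1$, infinitely many visits to $u$ entail infinitely many visits to its parent: frogs never halt in this model, so each visit to $u$ is followed by a step that reaches the parent with conditional probability $(d+1)^{-1}$ given the past, and on $\{u\text{ visited i.o.}\}$ the conditional (L\'evy) Borel--Cantelli lemma converts this into infinitely many such steps, i.e.\ infinitely many visits to the parent. Iterating along the length-$k$ ancestral path from $w$ to the root would give $\{V_G=\infty\}\subseteq\{\text{root visited i.o.}\}$ almost surely, and combining the two paragraphs yields $\P[\text{root visited i.o.}]\ge\P[V_G=\infty]\ge\P[V_\Lambda=\infty]$, which is the claim.

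\emph{Main obstacle.} The genuine content is precisely this descent. \thref{lem:Gcoupling} only sees the roots of the copies, which lie at level~$k$, while the corollary concerns the true root at level~$0$, and a naive attempt fails because a single walk reaches the root only finitely often. The resolution is that infinitely many visits to the \emph{finitely many} level-$k$ vertices supply infinitely many conditionally independent chances to step one level nearer the root, so a conditional second Borel--Cantelli argument propagates recurrence downward one level at a time. The only remaining hypothesis, that the walk escapes $H$, holds for $d\ge 2$ by transience; for $d=1$ the tree is a ray on which both frog models are recurrent with probability~$1$, so the inequality is immediate.
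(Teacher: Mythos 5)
Your reduction to \thref{lem:Gcoupling} is exactly the paper's: the same choice of $H$ (levels $0$ through $k-1$ with no sleeping frogs), the same $d^k$ copies of $\TT_d$ rooted at level $k$, and the same conclusion that the level-$k$ vertices are visited infinitely often with probability at least the recurrence probability of the usual model. Where you genuinely diverge is in the descent from level $k$ to the root. The paper does this in one shot: it calls it a \emph{dash} when a frog entering level $k$ from level $k+1$ walks straight down to the root and straight back, observes that each such entry independently produces a dash with probability $1/\bigl(d(d+1)^{2k-1}\bigr)$ (the independence is precisely where the absence of sleeping frogs below level $k$ is used), and concludes by Borel--Cantelli that infinitely many entries force infinitely many dashes, hence infinitely many root visits. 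You instead descend one level at a time, pigeonholing to get a single level-$k$ vertex visited i.o.\ and applying L\'evy's conditional Borel--Cantelli to transfer recurrence to the parent, iterated $k$ times. Both descents are correct; the paper's packages everything into one sequence of unconditionally independent trials at the cost of computing a round-trip probability, while yours needs the conditional form of Borel--Cantelli (and a filtration indexed by time rather than by visit count, to keep the ``next step'' measurable at the right moment) but is otherwise more modular. One small remark: the hypothesis of \thref{lem:Gcoupling} that the walk escapes $H$ only asks that a walk entering the finite graph $H$ a.s.\ returns to some copy $\Lambda(i)$, which holds for every $d\geq 1$, so your separate treatment of $d=1$ is harmless but unnecessary.
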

\begin{proof}
  To set up our alternate frog model,
  let $G=\TT_d$, thinking of it as $d^k$~copies of $\TT_d$
  joined by a graph consisting of levels~$0$ to $k-1$ of the original graph.
  Let $p$ be the probability that the root is visited infinitely often in the usual frog model.  
  Let $Y$ be the number of visits from level~$k+1$ to $k$ in the alternate model.
  It follows immediately from \thref{lem:Gcoupling} that 
  $
    \P[Y=\infty]\geq p
  $.
  
  Let $X$ be the number of visits to the root.
  We would like to show that
  \begin{align}
    \P[Y=\infty,\,X<\infty]=0,\label{lem:lemresult}
  \end{align}
  thus proving that $\P[X=\infty]\geq p$.
  Call it a \emph{dash} if a frog  moves from level~$k+1$ to a vertex~$v$ at level~$k$, walks
  directly to the root, and then walks directly back to $v$.
  Let $X'$ be the total number of dashes that occur.
  Conditional on a frog stepping from level~$k+1$ to~$k$, it makes a dash independently
  of all other frogs, since
  the model has no sleeping frogs at levels~$0$ to~$k-1$.
  Whether or not it makes a dash is also independent of its own future number of visits
  from level~$k+1$ to~$k$ and of dashes. Thus,
  at every visit from level~$k+1$ to~$k$, there is an independent $1/d(d+1)^{2k-1}$ chance of a dash,
  showing that
  \begin{align*}
    \P[Y=\infty,\,X'<\infty]=0.
  \end{align*}
  Since $X'<X$, this shows
  \eqref{lem:lemresult} and completes the proof.
  \end{proof}

We are now ready to prove the 0-1 law.
\begin{proof}[Proof of \thref{thm:01law}]
  Suppose the probability that the root is visited infinitely often in
  the frog model on $\TT_d$ with i.i.d.-$\nu$ initial conditions is $p>0$. We wish to show that $p=1$.
  The idea of the proof is to turn this statement into a more finite event,
  and then show that there are infinitely many independent opportunities for this
  event to occur. To this end, fix a constant~$N$. We will show that at least $N$ frogs visit
  the root with probability~$1$.
  
  \begin{claim}
    For any $k$ and $N$, there is a constant $K=K(k,N)$ such that the following statement holds.
    Consider the frog model on $\TT_d$ starting with a frog at level~$k$,
    with i.i.d.-$\nu$ sleeping frogs at levels $k,k+1,\ldots,K-1$ with the exception
    of the vertex of the initial frog, and with no sleeping frogs outside of this range. 
    With probability at least $p/2$, this process
    makes at least $N$ visits to the root.
  \end{claim}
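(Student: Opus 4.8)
The plan is to realize the finite-range model in the claim as a truncation of the infinite ``cleared'' model of \thref{cor:clearedtree}, and to show that the truncation is harmless once $K$ is large. The starting point is \thref{cor:clearedtree} itself: in the frog model started from one active frog at level~$k$, with no sleeping frogs below level~$k$ and i.i.d.-$\nu$ sleeping frogs at every level $\geq k$, the root is visited infinitely often with probability at least~$p$. In particular this model makes at least $N$ visits to the root with probability at least~$p$. I will call this the infinite model and approximate it by the finite-range models of the claim.

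First I would couple all the relevant models on one probability space by fixing, in advance and independently at every vertex, a simple random walk path and a $\nu$-distributed number of frogs. For each $K$ let $\mathcal{M}_K$ be the frog model whose sleeping frogs occupy exactly levels $k,\ldots,K-1$ (this is the model in the claim), with visit count $V^{(K)}$, and let $V^{(\infty)}$ be the visit count of the infinite model. Running every woken frog along its assigned path, a routine induction on time shows that the set of woken frogs and their trajectories in $\mathcal{M}_K$ is contained in that of $\mathcal{M}_{K+1}$ and of the infinite model; this is just the monotonicity of the frog model in its initial configuration, and it gives $V^{(K)} \leq V^{(K+1)} \leq V^{(\infty)}$.

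The key step is the almost-sure convergence $V^{(K)} \uparrow V^{(\infty)}$. Granting it, the events $\{V^{(K)} \geq N\}$ increase with union $\{V^{(\infty)} \geq N\}$, so continuity from below gives $\P[V^{(K)} \geq N] \to \P[V^{(\infty)} \geq N] \geq p$, and any $K=K(k,N)$ with $\P[V^{(K)} \geq N] \geq p/2$ proves the claim. To establish the convergence I would argue that on $\{V^{(\infty)} \geq N\}$ the first $N$ visits to the root are completed by some almost surely finite time $T$; up to time $T$ only finitely many frogs have ever been woken, and they originate from levels at most some finite random $L$. For every $K > L$ these frogs are already present in $\mathcal{M}_K$, while frogs above level $K$ stay asleep before time $T$, so $\mathcal{M}_K$ and the infinite model evolve identically up to $T$ and hence $V^{(K)} \geq N$.

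I expect the finite-time, finite-frog localization in the last paragraph to be the main obstacle: everything else is the standard monotone coupling plus continuity from below, whereas the real content is verifying that any fixed number of root-visits is produced in finite time by finitely many frogs drawn from a bounded band of levels, so that deleting the sleeping frogs above that band does not disturb the early history that produces those visits.
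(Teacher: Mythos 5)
Your proposal is correct and follows essentially the same route as the paper: the paper defines $E_K$ as the event that at least $N$ visits to the root are made by frogs woken without the help of any frog at level $K$ or beyond, observes that $E_K$ increases to the event of at least $N$ visits (which has probability at least $p$ by the cleared-tree corollary), and picks $K$ with $\P[E_K]\geq p/2$. Your explicit coupling of the truncated models with the infinite one, together with the finite-time/finite-level localization of the first $N$ visits, is just a more detailed rendering of that same monotone-convergence step.
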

  \begin{proof}
    Consider the frog process with no sleeping frogs below level~$k$, as in \thref{cor:clearedtree}.
    Let $E_K$ be the event that there are at least $N$ visits to the root by frogs that are woken
    without the help of any frogs at level~$K$ or beyond. 
    As $K\to\infty$, the event $E_K$ converges
    upward to the event that there are at least $N$ visits to the root by any active frog, which
    occurs with at least probability~$p$ by \thref{cor:clearedtree}. Thus, for sufficiently
    large $K$, we have $\P[E_K] \geq p/2$.
  \end{proof}

  Now, we can find infinitely many independent events with probability~$p/2$, 
  each implying $N$ visits to the root. Let $k_0,k'_0=0$, and inductively choose $k_i,k'_i$ as
  follows. Let $k'_{i}=K(k_{i-1}, N)$
  from the claim. 
  Let $k_{i}$ be the level of the first frog that wakes up at level $k'_i$ or beyond (assuming that
  $\nu$ is not a point mass at $0$, there will be such a frog).
  Now, imagine a frog process starting with this frog, with no sleeping frogs below level~$k_i$
  or at level~$k_{i+1}$ or beyond. These processes can all be embedded into the original frog
  process on $\TT_d$, and each one independently has a $p/2$ chance of visiting the root at least
  $N$ times, by the claim. Thus, the root is visited at least $N$ times almost surely, for arbitrary $N$.
\end{proof}

\subsection{Proving transience} \label{sect:transience}
\tikzset{every picture/.style={grow=right}}
\tikzset{vert/.style={circle,fill,inner sep=0,
              minimum size=0.15cm,draw,outer sep=0}}

Consider the branching random walk where
each particle gives birth at each step either to one
child to its left or to two children to its right.
Formally, we define this as a sequence of point processes.
Start with $\pproc_0$ as a single particle at $0$.
With probability $1/(d+1)$, the point process $\pproc_1$ consists of a single
particle at $-1$; with probability $d/(d+1)$, it consists of two particles
at $1$. After this, each particle in $\pproc_n$ produces children in $\pproc_{n+1}$
in the same way relative to its position, independently of all other particles.
We will use this branching random walk to prove the frog model transient for $d\geq 6$
and closely related processes to extend this down to $d=5$.

\begin{prop}\thlabel{prop:6tree}
  For $d\geq 6$, the frog model on $\TT_d$ is almost surely transient.
\end{prop}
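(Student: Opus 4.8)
The plan is to dominate the frog model by the branching random walk $\pproc$ defined above and then invoke \thref{cor:homtree}. By that corollary it suffices to show that in the frog model on $\Thom_d$, started from a single active frog at the root and with no sleeping frogs on the root's ancestors, level~$0$ is visited finitely often almost surely. I would track each frog by its signed level, its distance from the root taken negative along the ancestral ray. Because every vertex of $\Thom_d$ has one parent and $d$ children, the level of each frog performs the walk that steps $-1$ with probability $1/(d+1)$ and $+1$ with probability $d/(d+1)$, matching the marginal motion of a particle of $\pproc$. I would couple the two processes level by level: match the initial frog to $\pproc_0$; when a frog steps left, send its matched particle to the single left-child; when a frog steps right to a child, send its matched particle to the two right-children, letting one continue the moving frog and assigning the other to the frog (if any) newly woken at that child. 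A leftward step wakes no frog, since the parent of a vertex is always visited before the vertex itself and the ancestral ray carries no frogs; a rightward step wakes at most one frog. Hence the matching is injective and level-preserving, and every visit of a frog to the root is a visit of its particle to the origin, so the number of root visits is at most the number of times $\pproc$ visits $0$.

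It remains to show that $\pproc$ visits the origin finitely often almost surely for $d\ge 6$, which I would establish by a first-moment computation. Writing $Z_n(x)$ for the number of particles of $\pproc_n$ at $x$, conditioning on one generation gives
\[ \E Z_{n+1}(x) = \frac{1}{d+1}\,\E Z_n(x+1) + \frac{2d}{d+1}\,\E Z_n(x-1), \]
because a particle at $x+1$ yields one child at $x$ with probability $1/(d+1)$ and a particle at $x-1$ yields two children at $x$ with probability $d/(d+1)$. Summing against $w^x$ gives $\sum_x \E Z_n(x)\,w^x = \bigl(\tfrac{1}{(d+1)w}+\tfrac{2dw}{d+1}\bigr)^n$, and extracting the constant term shows $\E Z_n(0)=0$ for odd $n$ and
\[ \E Z_{2k}(0) = \binom{2k}{k}\Bigl(\frac{2d}{(d+1)^2}\Bigr)^{k}. \]
Equivalently this is $m^{2k}$ times the return probability of the size-biased spine walk, with mean offspring number $m=(2d+1)/(d+1)$.

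Summing over time with the identity $\sum_{k\ge 0}\binom{2k}{k}y^k=(1-4y)^{-1/2}$ yields
\[ \E\Bigl[\tsum_{n\ge 0} Z_n(0)\Bigr] = \Bigl(1-\tfrac{8d}{(d+1)^2}\Bigr)^{-1/2}, \]
which is finite exactly when $8d<(d+1)^2$, that is when $d^2-6d+1>0$, i.e.\ $d>3+2\sqrt2\approx 5.83$, hence for every integer $d\ge 6$. A finite expected total number of origin visits forces finitely many almost surely; by the coupling the frog model on $\Thom_d$ then visits level~$0$ finitely often, and \thref{cor:homtree} gives that the frog model on $\TT_d$ is transient.

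The step I expect to require the most care is the coupling: one must verify that the over-counting always favors $\pproc$, since it creates a particle at every rightward step while the frog model wakes a frog only at unvisited children and wakes nothing on leftward steps. Granting this domination, the remainder is a routine generating-function calculation. It is worth noting that the threshold $3+2\sqrt2$ lands just below $6$, which is precisely why $d=5$ slips past this argument and must be handled by the finer multitype construction.
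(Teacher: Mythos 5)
Your proposal is correct. The reduction is identical to the paper's: both pass to the frog model on $\Thom_d$ via \thref{cor:homtree} and dominate its projection onto $\ZZ$ by the branching random walk $\pproc$ in which a particle begets one left-child with probability $1/(d+1)$ and two right-children with probability $d/(d+1)$; your more careful justification of that coupling (backward steps never wake frogs because first visits in a tree always arrive from the parent) is exactly the observation the paper states in one line. Where you genuinely diverge is in proving $\pproc$ transient. The paper uses the exponentially weighted count $w(\pproc_n)=\sum_i e^{-\theta P(i)}$, notes that $w(\pproc_n)/\mu^n$ with $\mu=\frac{1}{d+1}e^{\theta}+\frac{2d}{d+1}e^{-\theta}$ is a positive martingale, optimizes $\theta=\log(2d)/2$ to get $\mu=2\sqrt{2d}/(d+1)<1$, and concludes from a.s.\ convergence that $w(\pproc_n)\to 0$, precluding infinitely many returns. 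You instead compute the first moment $\E Z_{2k}(0)=\binom{2k}{k}\bigl(2d/(d+1)^2\bigr)^k$ exactly and sum it, getting a finite expected total number of origin visits precisely when $8d<(d+1)^2$. Both routes land on the same threshold $d>3+2\sqrt{2}$, unsurprisingly, since your generating function evaluated at $w=e^{-\theta}$ is exactly $\mu^n$. Your version is more elementary (no martingale convergence, just finiteness of an expectation) and even yields a closed form for the expected number of returns; the paper's martingale formulation is the one that scales to the multitype refinement needed for $d=5$, where the weight vector becomes a Perron--Frobenius eigenvector and a first-moment bookkeeping of all return configurations would be far messier, and it also covers the critical case $\mu=1$, which your summed series (divergent at $d=3+2\sqrt 2$) would not.
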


\begin{proof}
Consider the frog model on $\homtree_d$, starting with no sleeping frogs at direct
ancestors of the root, as
in \thref{cor:homtree}. When a frog jumps backward in this process,
it never spawns a new frog, and when it moves forward, it sometimes does.
Thus, the projection of this frog model onto the integers can be coupled
with $(\pproc_n,\,n\geq 0)$ so that every frog has a corresponding particle.
By \thref{cor:homtree}, proving that $\pproc_n$
visits $0$ finitely many times a.s.\ proves that the frog model on $\TT_d$ is transient a.s.

  To determine the behavior of $\pproc_n$, we define a weight function $w$ on point process configurations.
We refer to the position of a particle~$i$ in a point process
configuration by $P(i)$ and define
\begin{align}
  w(\pproc) = \sum_{i\in\pproc} e^{-\theta P(i)}, \label{eq:weightfunc}
\end{align}
with $\theta$ to be chosen later. Letting $\mu=\E w(\pproc_1)$ we have
\begin{align*}
  \E[w(\pproc_{n+1}) \mid \pproc_n] = \mu  w(\pproc_n),
\end{align*}
and so the sequence
$w(\pproc_n)/\mu^n$ is a martingale. As it is positive, it converges almost surely.
When $\mu< 1$ this means $w(\pproc_n) \to 0$. If a particle in $\pproc_n$ occupies the origin then $w(\pproc_n) \geq 1$, and so infinitely many visits to the origin prevents $w(\pproc_n)$ from converging.
Hence, $\mu<1$ implies the a.s.\ transience of $\pproc_n$. 
(In fact, this holds when $\mu=1$ as well, though we will not need this.)
It then suffices to show that there
exists $\theta$ making $\mu< 1$.
We compute
\begin{align*}
  \mu = \tfrac 1 { d+1} e^{\theta} +2 \tfrac d { d+1} e^{-\theta}.
\end{align*}
This is minimized by setting $\theta = \log(2d)/2$,
which makes $\mu=2\sqrt{2d}/(d+1)$. A bit of algebra shows that $\mu< 1$ when 
$d> 3+2\sqrt{2}\approx 5.83$.\end{proof}

By using a multitype branching process, we can extend this proof to show
transience for $\TT_5$. Before we do so, we
will show how it works in a setting where humans
can do the math without much assistance.
\begin{prop}\thlabel{prop:5.5tree}
    Let $\TT_{5,6}$ be the tree whose levels alternate between
  vertices with $5$ children and vertices with $6$ children, starting with the root
  having either $5$ or $6$ children. The frog model on this tree is transient a.s.
\end{prop}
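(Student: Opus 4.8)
The plan is to adapt the martingale argument from \thref{prop:6tree} to the two-type setting forced by the alternating tree $\TT_{5,6}$. As in that proof, I would begin by passing to the homogeneous analogue via \thref{cor:homtree} (working on the tree whose backward-ancestor structure is compatible with the alternation), and then project the frog model onto the integers, dominating it by a branching random walk in which every awake frog has a corresponding particle. The essential new feature is that a particle's offspring distribution now depends on its current level's parity: a particle sitting at an even level (a ``$5$-type'' position) produces either one child to the left or five children to the right, while a particle at an odd level (a ``$6$-type'' position) produces either one child to the left or six children to the right, with the backward/forward split probabilities $1/(d+1)$ and $d/(d+1)$ adjusted to $d=5$ and $d=6$ respectively. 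So rather than a single branching random walk I would track a two-type process $(\pproc_n)$ whose type alternates with spatial position.

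The key step is to choose a weight function that linearizes this two-type recursion. Instead of the single weight $w(\pproc)=\sum_i e^{-\theta P(i)}$, I would use a parity-dependent weight of the form $w(\pproc)=\sum_{i\in\pproc} \alpha^{P(i)\bmod 2}\, e^{-\theta P(i)}$, assigning weights $\alpha_0$ and $\alpha_1$ to even- and odd-level particles. The goal is to find $\theta$, $\alpha_0$, $\alpha_1>0$ and a constant $\mu<1$ so that
\begin{align*}
  \E[w(\pproc_{n+1})\mid \pproc_n] = \mu\, w(\pproc_n).
\end{align*}
This amounts to requiring that the one-step expected weight contributed by a single even-type particle equals $\mu$ times its own weight, and likewise for an odd-type particle. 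Writing out these two balance equations gives a $2\times 2$ eigenvalue problem: the vector $(\alpha_0,\alpha_1)$ must be a positive (Perron) eigenvector of a mean matrix $M(\theta)$ whose entries record how a particle of each parity distributes expected weight onto the two parities of its children, and $\mu$ is the corresponding Perron eigenvalue. Once such a martingale $w(\pproc_n)/\mu^n$ exists, positivity gives almost-sure convergence, and since a particle at the origin forces $w(\pproc_n)\geq \min(\alpha_0,\alpha_1)>0$, the argument of \thref{prop:6tree} shows that $\mu<1$ prevents infinitely many visits to the origin, hence yields transience through \thref{cor:homtree}.

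The main obstacle, then, is purely the optimization: I must exhibit a single $\theta$ for which the Perron eigenvalue $\mu(\theta)$ of $M(\theta)$ dips below $1$. The mean matrix will have the schematic form
\begin{align*}
  M(\theta) = \begin{pmatrix}
    \tf{1}{6}\,e^{\theta}\,\tf{\alpha_1}{\alpha_0}\text{-type entries} & \cdots\\
    \cdots & \cdots
  \end{pmatrix},
\end{align*}
where each entry is a forward term (five or six children one step to the right, carrying $e^{-\theta}$ and a parity-flip factor) plus a backward term (one child one step to the left, carrying $e^{\theta}$ and a parity flip); because a rightward or leftward step always changes parity, $M(\theta)$ will in fact be off-diagonal, so its Perron eigenvalue is simply $\mu(\theta)=\sqrt{M_{01}(\theta)M_{10}(\theta)}$ and the eigenvector ratio is determined automatically. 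I expect this to reduce to showing that
\begin{align*}
  \mu(\theta)^2 = \Bigl(\tf{1}{6}e^{\theta}+5\cdot\tf{5}{6}e^{-\theta}\Bigr)
    \Bigl(\tf{1}{7}e^{\theta}+6\cdot\tf{6}{7}e^{-\theta}\Bigr)
\end{align*}
(or a close variant, depending on exactly how the $d$-dependence enters the split probabilities), after which minimizing over $\theta$ is a one-variable calculus exercise. The proposition is framed precisely so that this minimization can be carried out by hand and comes out strictly below $1$; the value of \thref{prop:5.5tree} is that it rehearses the multitype eigenvalue computation on a case small enough to check explicitly, before the $27$-type version needed for \thref{thm:5tree}~(\ref{d5}) demands computer assistance.
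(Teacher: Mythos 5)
Your overall framework---dominate the frog model by a multitype branching random walk on $\ZZ$, build a Perron--Frobenius martingale $w_nv/\phi(\theta)^n$, and show the leading eigenvalue drops below $1$---is exactly the paper's, but the dominating process you propose is too crude, and the final optimization you defer to ``a one-variable calculus exercise'' does not come out below $1$. First, a small slip: when a frog steps forward it wakes exactly one sleeping frog, so the forward branch should produce \emph{two} children to the right (with probability $d/(d+1)$), not five or six. Even after correcting this, your two-type (parity-only) mean matrix is off-diagonal as you say, and its Perron eigenvalue satisfies
\begin{align*}
\mu(\theta)^2=\Bigl(\tfrac16 e^{\theta}+\tfrac{10}{6}e^{-\theta}\Bigr)\Bigl(\tfrac17 e^{\theta}+\tfrac{12}{7}e^{-\theta}\Bigr)=\tfrac1{42}\bigl(e^{2\theta}+22+120e^{-2\theta}\bigr),
\end{align*}
whose minimum over $\theta$ is $(22+4\sqrt{30})/42\approx 1.045$, so $\min_\theta\mu(\theta)\approx 1.02>1$. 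No choice of $\theta$ (or of the weights $\alpha_0,\alpha_1$, which are forced by the eigenvector equation) can rescue this: it is the same obstruction that stops \thref{prop:6tree} at $d\geq 6$, since the doubling walk already fails for $d=5$ and averaging the $5$- and $6$-levels does not save enough.

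The missing idea is that the particle types must record more than the parity of the level. The paper uses six types $F_5,D_5,B_5,F_6,D_6,B_6$: a $B$ particle records that a frog has just stepped backward, so one of its forward neighbors has already been cleared and a forward step there spawns nothing; a $D$ particle records two frogs sitting on the same vertex, which with non-negligible probability both step to the same child and wake only one new frog rather than two. These savings over the doubling branching random walk are what push the spectral radius of the resulting $6\times 6$ matrix $\Phi(\log 3)$ down to about $0.9937<1$. Without them, the two-type computation you outline provably cannot succeed.
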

\begin{proof}
Let $\homtree_{5,6}$ be the five-six children alternating homogeneous tree which contains $\TT_{5,6}$ and place a sleeping frog at each vertex except for direct ancestors of the root of $\TT_{5,6}$. \thref{lem:Gcoupling} implies that it suffices to prove transience of this frog model on $\homtree_{5,6}$.

First note that a frog at a vertex with five children has different probabilities of moving forwards or backwards than a frog at a vertex with six children. By design the tree deterministically alternates, so a frog also alternates between each state. 

When a frog moves backwards there is chance it immediately jumps forward to the same vertex,
which will never spawn a new frog. Similarly, when two frogs occupy the same site there is a chance both jump forward to the same vertex, spawning at most one frog, not two. The idea is to introduce additional particle types that act like frogs in these more advantageous states.

Consider a multitype branching random walk on $\ZZ$ with six particle types, $F_5$, $D_5$, $B_5$,
$F_6$, $D_6$, and $B_6$. The subscript accounts for whether a frog is at a vertex with 5 or 6 children. $B$ particles represent frogs that have just stepped
backward. $D$ particles represent two frogs at once, the waker and wakee at a vertex where a frog has just woken up. Last, $F$ particles represent single frogs with sleeping frogs present at all children. A visual depiction
of these particle types is provided in Figure~\ref{fig:particletypes}, and the distribution
of children for each particle type is defined in Figure~\ref{fig:5.5treechildren}. 

Let $\zeta_n$ be the branching random walk
in which particles reproduce independently with the given
child distributions. These distributions are chosen to match how the projections of frogs on the integers behave. Ignoring for a moment whether a frog is at a site with five or six children, when a frog jumps back it becomes of type $B$ and when a new frog wakes it and its waker consolidate into a type $D$ particle. Any extra frogs become a type $F$ particle. These particles then reproduce independently on a ``fresh" tree configured so that the particles always generate at least as many frogs as the projection of the actual frog model. For this reason we can couple the integer projection of the frog model on $\homtree_{5,6}$ with $\zeta_n$ so that the particles representing awake frogs are a subset of $\zeta_n$. It therefore suffices to prove that $\zeta_n$ is transient.
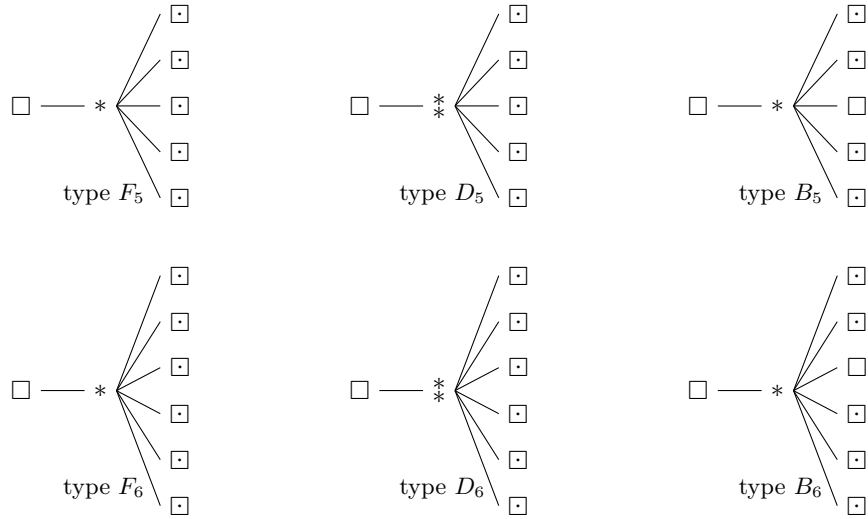
\begin{figure}
\begin{center}
  \begin{tikzpicture}[sibling distance=4pt, every level 2 node/.style={inner ysep=0}]
    \begin{scope}
      \Tree       [.$\square$ [.$*$ $\boxdot$  $\boxdot$ $\boxdot$ $\boxdot$ $\boxdot$    ]  ]  
      \node[font=\small] at (1.1,-1.175) {type $F_5$};
    \end{scope}
    
 \begin{scope}[shift={(4.5cm,0)}]
\Tree       [.$\square$ [.{$*$\\[-7pt]$*$} $\boxdot$  $\boxdot$ $\boxdot$ $\boxdot$ $\boxdot$    ]  ]  
      \node[font=\small] at (1.1,-1.175) {type $D_5$};
 \end{scope}
 \begin{scope}[shift={(9cm,0)}]
\Tree       [.$\square$ [.$*$ $\boxdot$  $\boxdot$ $\square$ $\boxdot$ $\boxdot$    ]  ]  
      \node[font=\small] at (1.1,-1.175) {type $B_5$};

 \end{scope}
 
\end{tikzpicture} \\[15pt]
\begin{tikzpicture}[sibling distance=4pt, every level 2 node/.style={inner ysep=0}]
 \begin{scope}
\Tree       [.$\square$ [.$*$ $\boxdot$ $\boxdot$  $\boxdot$ $\boxdot$ $\boxdot$ $\boxdot$    ]  ] 
      \node[font=\small] at (1.1,-1.3) {type $F_6$};
 \end{scope}
 
 \begin{scope}[shift={(4.5cm,0)}]
\Tree       [.$\square$ [.{$*$\\[-7pt]$*$} $\boxdot$ $\boxdot$  $\boxdot$ $\boxdot$ $\boxdot$ $\boxdot$    ]  ]
      \node[font=\small] at (1.1,-1.3) {type $D_6$};
 \end{scope}
 
 \begin{scope}[shift={(9cm,0)}]
\Tree       [.$\square$ [.$*$ $\boxdot$ $\boxdot$  $\boxdot$ $\square$ $\boxdot$ $\boxdot$    ]  ]  
      \node[font=\small] at (1.1,-1.3) {type $B_6$};
 \end{scope}
 
\end{tikzpicture}

\caption{A depiction of the six particle types from the proof of \thref{prop:5.5tree}.
Each asterisk is a frog represented by the particle. 
The symbol~$\boxdot$ signifies a vertex with a sleeping frog,
and the symbol~$\square$ represents a vertex with no sleeping frog.}
\label{fig:particletypes}
\end{center}
\end{figure}

\begin{figure}
  \begin{center}
  \begin{tikzpicture}[scale=0.55, particle/.style={draw,thick,shape=circle,inner sep=0,
                                                  anchor=south,font=\tiny,minimum size=1.225em}]
    \begin{scope}
      \begin{scope}
        \draw[very thick] 
        (0,0) arc[y radius=0.2, x radius=0.5, start angle=270, delta angle=35]
        (0,0) arc[y radius=0.2, x radius=0.5, start angle=270, delta angle=-35]
        (1,0) arc[y radius=0.2, x radius=0.5, start angle=270, delta angle=35]
        (1,0) arc[y radius=0.2, x radius=0.5, start angle=270, delta angle=-35]
        (-1,0) arc[y radius=0.2, x radius=0.5, start angle=270, delta angle=35]
        (-1,0) arc[y radius=0.2, x radius=0.5, start angle=270, delta angle=-35]
        (0,0) node[particle] {$\bm{F_5}$}
        (1.5,0) node (A) {};
      \end{scope}
      
      \begin{scope}[shift={(6,1)}]
        \draw[very thick] 
        (-1.5,0) node (B1) {}
        (0,0) arc[y radius=0.2, x radius=0.5, start angle=270, delta angle=35]
        (0,0) arc[y radius=0.2, x radius=0.5, start angle=270, delta angle=-35]
        (1,0) arc[y radius=0.2, x radius=0.5, start angle=270, delta angle=35]
        (1,0) arc[y radius=0.2, x radius=0.5, start angle=270, delta angle=-35]
        (-1,0) arc[y radius=0.2, x radius=0.5, start angle=270, delta angle=35]
        (-1,0) arc[y radius=0.2, x radius=0.5, start angle=270, delta angle=-35]
        (-1,0) node[particle] {$\bm{B_6}$};
      \end{scope}
      \begin{scope}[shift={(6,-1)}]
        \draw[very thick] 
        (-1.5,0) node (B2) {}
        (0,0) arc[y radius=0.2, x radius=0.5, start angle=270, delta angle=35]
        (0,0) arc[y radius=0.2, x radius=0.5, start angle=270, delta angle=-35]
        (1,0) arc[y radius=0.2, x radius=0.5, start angle=270, delta angle=35]
        (1,0) arc[y radius=0.2, x radius=0.5, start angle=270, delta angle=-35]
        (-1,0) arc[y radius=0.2, x radius=0.5, start angle=270, delta angle=35]
        (-1,0) arc[y radius=0.2, x radius=0.5, start angle=270, delta angle=-35]
        (1,0) node[particle] {$\bm{D_6}$};
      \end{scope}
      \draw[->] (A)--node[auto,font=\scriptsize,shift={(0.125,-0.075)}] {$\frac16$} (B1);
      \draw[->] (A)--node[auto,swap,font=\scriptsize,shift={(0.125,0.075)}] {$\frac56$}(B2);
    \end{scope}

        \begin{scope}[shift={(13,0)}]
      \begin{scope}
        \draw[very thick] 
        (0,0) arc[y radius=0.2, x radius=0.5, start angle=270, delta angle=35]
        (0,0) arc[y radius=0.2, x radius=0.5, start angle=270, delta angle=-35]
        (1,0) arc[y radius=0.2, x radius=0.5, start angle=270, delta angle=35]
        (1,0) arc[y radius=0.2, x radius=0.5, start angle=270, delta angle=-35]
        (-1,0) arc[y radius=0.2, x radius=0.5, start angle=270, delta angle=35]
        (-1,0) arc[y radius=0.2, x radius=0.5, start angle=270, delta angle=-35]
        (0,0) node[particle] {$\bm{F_6}$}
        (1.5,0) node (A) {};
      \end{scope}
      
      \begin{scope}[shift={(6,1)}]
        \draw[very thick] 
        (-1.5,0) node (B1) {}
        (0,0) arc[y radius=0.2, x radius=0.5, start angle=270, delta angle=35]
        (0,0) arc[y radius=0.2, x radius=0.5, start angle=270, delta angle=-35]
        (1,0) arc[y radius=0.2, x radius=0.5, start angle=270, delta angle=35]
        (1,0) arc[y radius=0.2, x radius=0.5, start angle=270, delta angle=-35]
        (-1,0) arc[y radius=0.2, x radius=0.5, start angle=270, delta angle=35]
        (-1,0) arc[y radius=0.2, x radius=0.5, start angle=270, delta angle=-35]
        (-1,0) node[particle] {$\bm{B_5}$};
      \end{scope}
      \begin{scope}[shift={(6,-1)}]
        \draw[very thick] 
        (-1.5,0) node (B2) {}
        (0,0) arc[y radius=0.2, x radius=0.5, start angle=270, delta angle=35]
        (0,0) arc[y radius=0.2, x radius=0.5, start angle=270, delta angle=-35]
        (1,0) arc[y radius=0.2, x radius=0.5, start angle=270, delta angle=35]
        (1,0) arc[y radius=0.2, x radius=0.5, start angle=270, delta angle=-35]
        (-1,0) arc[y radius=0.2, x radius=0.5, start angle=270, delta angle=35]
        (-1,0) arc[y radius=0.2, x radius=0.5, start angle=270, delta angle=-35]
        (1,0) node[particle] {$\bm{D_5}$};
      \end{scope}
      \draw[->] (A)--node[auto,font=\scriptsize,shift={(0.125,-0.075)}] {$\frac17$} (B1);
      \draw[->] (A)--node[auto,swap,font=\scriptsize,shift={(0.125,0.075)}] {$\frac67$}(B2);
    \end{scope}
    \begin{scope}[shift={(0,-5.5)}]
      \begin{scope}
        \draw[very thick] 
        (0,0) arc[y radius=0.2, x radius=0.5, start angle=270, delta angle=35]
        (0,0) arc[y radius=0.2, x radius=0.5, start angle=270, delta angle=-35]
        (1,0) arc[y radius=0.2, x radius=0.5, start angle=270, delta angle=35]
        (1,0) arc[y radius=0.2, x radius=0.5, start angle=270, delta angle=-35]
        (-1,0) arc[y radius=0.2, x radius=0.5, start angle=270, delta angle=35]
        (-1,0) arc[y radius=0.2, x radius=0.5, start angle=270, delta angle=-35]
        (0,0) node[particle] {$\bm{D_5}$}
        (1.5,0) node (A) {};
      \end{scope}
      
      \begin{scope}[shift={(6,2.6)}]
        \draw[very thick] 
        (-1.5,0) node (B1) {}
        (0,0) arc[y radius=0.2, x radius=0.5, start angle=270, delta angle=35]
        (0,0) arc[y radius=0.2, x radius=0.5, start angle=270, delta angle=-35]
        (1,0) arc[y radius=0.2, x radius=0.5, start angle=270, delta angle=35]
        (1,0) arc[y radius=0.2, x radius=0.5, start angle=270, delta angle=-35]
        (-1,0) arc[y radius=0.2, x radius=0.5, start angle=270, delta angle=35]
        (-1,0) arc[y radius=0.2, x radius=0.5, start angle=270, delta angle=-35]
        (-1,0) node[particle] {$\bm{D_6}$};
      \end{scope}
      \begin{scope}[shift={(6,1.25)}]
        \draw[very thick] 
        (-1.5,0) node (B2) {}
        (0,0) arc[y radius=0.2, x radius=0.5, start angle=270, delta angle=35]
        (0,0) arc[y radius=0.2, x radius=0.5, start angle=270, delta angle=-35]
        (1,0) arc[y radius=0.2, x radius=0.5, start angle=270, delta angle=35]
        (1,0) arc[y radius=0.2, x radius=0.5, start angle=270, delta angle=-35]
        (-1,0) arc[y radius=0.2, x radius=0.5, start angle=270, delta angle=35]
        (-1,0) arc[y radius=0.2, x radius=0.5, start angle=270, delta angle=-35]
        (-1,0) node[particle] {$\bm{B_6}$}
        (1,0) node[particle] {$\bm{D_6}$};
      \end{scope}
      \begin{scope}[shift={(6,-0.75)}]
        \draw[very thick] 
        (-1.5,0) node (B3) {}
        (0,0) arc[y radius=0.2, x radius=0.5, start angle=270, delta angle=35]
        (0,0) arc[y radius=0.2, x radius=0.5, start angle=270, delta angle=-35]
        (1,0) arc[y radius=0.2, x radius=0.5, start angle=270, delta angle=35]
        (1,0) arc[y radius=0.2, x radius=0.5, start angle=270, delta angle=-35]
        (-1,0) arc[y radius=0.2, x radius=0.5, start angle=270, delta angle=35]
        (-1,0) arc[y radius=0.2, x radius=0.5, start angle=270, delta angle=-35]
        (1,0) node[particle] (stacker) {$\bm{D_6}$}
        (stacker.north) node[particle] {$\bm{D_6}$};
      \end{scope}
      \begin{scope}[shift={(6,-2.6)}]
        \draw[very thick] 
        (-1.5,0) node (B4) {}
        (0,0) arc[y radius=0.2, x radius=0.5, start angle=270, delta angle=35]
        (0,0) arc[y radius=0.2, x radius=0.5, start angle=270, delta angle=-35]
        (1,0) arc[y radius=0.2, x radius=0.5, start angle=270, delta angle=35]
        (1,0) arc[y radius=0.2, x radius=0.5, start angle=270, delta angle=-35]
        (-1,0) arc[y radius=0.2, x radius=0.5, start angle=270, delta angle=35]
        (-1,0) arc[y radius=0.2, x radius=0.5, start angle=270, delta angle=-35]
        (1,0) node[particle] (stacker) {$\bm{D_6}$}
        (stacker.north) node[particle] {$\bm{F_6}$};
      \end{scope}
      \draw[->] (A)-- (B1)  node[pos=0.7,shift={(0,0.4)},font=\scriptsize] {$\frac{1}{36}$};
      \draw[->] (A)--(B2) node[pos=0.7,shift={(0,0.25)},font=\scriptsize] {$\frac{5}{18}$};
      \draw[->] (A)--(B3) node[pos=0.7,shift={(0,0.2)},font=\scriptsize] {$\frac{5}{9}$};
      \draw[->] (A)--(B4) node[pos=0.7,shift={(0,-0.375)},font=\scriptsize] {$\frac{5}{36}$};
    \end{scope}

    \begin{scope}[shift={(13,-5.5)}]
      \begin{scope}
        \draw[very thick] 
        (0,0) arc[y radius=0.2, x radius=0.5, start angle=270, delta angle=35]
        (0,0) arc[y radius=0.2, x radius=0.5, start angle=270, delta angle=-35]
        (1,0) arc[y radius=0.2, x radius=0.5, start angle=270, delta angle=35]
        (1,0) arc[y radius=0.2, x radius=0.5, start angle=270, delta angle=-35]
        (-1,0) arc[y radius=0.2, x radius=0.5, start angle=270, delta angle=35]
        (-1,0) arc[y radius=0.2, x radius=0.5, start angle=270, delta angle=-35]
        (0,0) node[particle] {$\bm{D_6}$}
        (1.5,0) node (A) {};
      \end{scope}
      
      \begin{scope}[shift={(6,2.6)}]
        \draw[very thick] 
        (-1.5,0) node (B1) {}
        (0,0) arc[y radius=0.2, x radius=0.5, start angle=270, delta angle=35]
        (0,0) arc[y radius=0.2, x radius=0.5, start angle=270, delta angle=-35]
        (1,0) arc[y radius=0.2, x radius=0.5, start angle=270, delta angle=35]
        (1,0) arc[y radius=0.2, x radius=0.5, start angle=270, delta angle=-35]
        (-1,0) arc[y radius=0.2, x radius=0.5, start angle=270, delta angle=35]
        (-1,0) arc[y radius=0.2, x radius=0.5, start angle=270, delta angle=-35]
        (-1,0) node[particle] {$\bm{D_5}$};
      \end{scope}
      \begin{scope}[shift={(6,1.25)}]
        \draw[very thick] 
        (-1.5,0) node (B2) {}
        (0,0) arc[y radius=0.2, x radius=0.5, start angle=270, delta angle=35]
        (0,0) arc[y radius=0.2, x radius=0.5, start angle=270, delta angle=-35]
        (1,0) arc[y radius=0.2, x radius=0.5, start angle=270, delta angle=35]
        (1,0) arc[y radius=0.2, x radius=0.5, start angle=270, delta angle=-35]
        (-1,0) arc[y radius=0.2, x radius=0.5, start angle=270, delta angle=35]
        (-1,0) arc[y radius=0.2, x radius=0.5, start angle=270, delta angle=-35]
        (-1,0) node[particle] {$\bm{B_5}$}
        (1,0) node[particle] {$\bm{D_5}$};
      \end{scope}
      \begin{scope}[shift={(6,-0.75)}]
        \draw[very thick] 
        (-1.5,0) node (B3) {}
        (0,0) arc[y radius=0.2, x radius=0.5, start angle=270, delta angle=35]
        (0,0) arc[y radius=0.2, x radius=0.5, start angle=270, delta angle=-35]
        (1,0) arc[y radius=0.2, x radius=0.5, start angle=270, delta angle=35]
        (1,0) arc[y radius=0.2, x radius=0.5, start angle=270, delta angle=-35]
        (-1,0) arc[y radius=0.2, x radius=0.5, start angle=270, delta angle=35]
        (-1,0) arc[y radius=0.2, x radius=0.5, start angle=270, delta angle=-35]
        (1,0) node[particle] (stacker) {$\bm{D_5}$}
        (stacker.north) node[particle] {$\bm{D_5}$};
      \end{scope}
      \begin{scope}[shift={(6,-2.6)}]
        \draw[very thick] 
        (-1.5,0) node (B4) {}
        (0,0) arc[y radius=0.2, x radius=0.5, start angle=270, delta angle=35]
        (0,0) arc[y radius=0.2, x radius=0.5, start angle=270, delta angle=-35]
        (1,0) arc[y radius=0.2, x radius=0.5, start angle=270, delta angle=35]
        (1,0) arc[y radius=0.2, x radius=0.5, start angle=270, delta angle=-35]
        (-1,0) arc[y radius=0.2, x radius=0.5, start angle=270, delta angle=35]
        (-1,0) arc[y radius=0.2, x radius=0.5, start angle=270, delta angle=-35]
        (1,0) node[particle] (stacker) {$\bm{D_5}$}
        (stacker.north) node[particle] {$\bm{F_5}$};
      \end{scope}
      \draw[->] (A)-- (B1)  node[pos=0.7,shift={(0,0.4)},font=\scriptsize] {$\frac{1}{49}$};
      \draw[->] (A)--(B2) node[pos=0.7,shift={(0,0.25)},font=\scriptsize] {$\frac{12}{49}$};
      \draw[->] (A)--(B3) node[pos=0.7,shift={(0,0.225)},font=\scriptsize] {$\frac{30}{49}$};
      \draw[->] (A)--(B4) node[pos=0.7,shift={(0,-0.375)},font=\scriptsize] {$\frac{6}{49}$};
    \end{scope}
    \begin{scope}[shift={(0,-11)}]
      \begin{scope}
        \draw[very thick] 
        (0,0) arc[y radius=0.2, x radius=0.5, start angle=270, delta angle=35]
        (0,0) arc[y radius=0.2, x radius=0.5, start angle=270, delta angle=-35]
        (1,0) arc[y radius=0.2, x radius=0.5, start angle=270, delta angle=35]
        (1,0) arc[y radius=0.2, x radius=0.5, start angle=270, delta angle=-35]
        (-1,0) arc[y radius=0.2, x radius=0.5, start angle=270, delta angle=35]
        (-1,0) arc[y radius=0.2, x radius=0.5, start angle=270, delta angle=-35]
        (0,0) node[particle] {$\bm{B_5}$}
        (1.5,0) node (A) {};
      \end{scope}
      
      \begin{scope}[shift={(6,1.25)}]
        \draw[very thick] 
        (-1.5,0) node (B1) {}
        (0,0) arc[y radius=0.2, x radius=0.5, start angle=270, delta angle=35]
        (0,0) arc[y radius=0.2, x radius=0.5, start angle=270, delta angle=-35]
        (1,0) arc[y radius=0.2, x radius=0.5, start angle=270, delta angle=35]
        (1,0) arc[y radius=0.2, x radius=0.5, start angle=270, delta angle=-35]
        (-1,0) arc[y radius=0.2, x radius=0.5, start angle=270, delta angle=35]
        (-1,0) arc[y radius=0.2, x radius=0.5, start angle=270, delta angle=-35]
        (-1,0) node[particle] {$\bm{B_6}$};
      \end{scope}
      \begin{scope}[shift={(6,0)}]
        \draw[very thick] 
        (-1.5,0) node (B2) {}
        (0,0) arc[y radius=0.2, x radius=0.5, start angle=270, delta angle=35]
        (0,0) arc[y radius=0.2, x radius=0.5, start angle=270, delta angle=-35]
        (1,0) arc[y radius=0.2, x radius=0.5, start angle=270, delta angle=35]
        (1,0) arc[y radius=0.2, x radius=0.5, start angle=270, delta angle=-35]
        (-1,0) arc[y radius=0.2, x radius=0.5, start angle=270, delta angle=35]
        (-1,0) arc[y radius=0.2, x radius=0.5, start angle=270, delta angle=-35]
        (1,0) node[particle] {$\bm{F_6}$};
      \end{scope}
      \begin{scope}[shift={(6,-1.25)}]
        \draw[very thick] 
        (-1.5,0) node (B3) {}
        (0,0) arc[y radius=0.2, x radius=0.5, start angle=270, delta angle=35]
        (0,0) arc[y radius=0.2, x radius=0.5, start angle=270, delta angle=-35]
        (1,0) arc[y radius=0.2, x radius=0.5, start angle=270, delta angle=35]
        (1,0) arc[y radius=0.2, x radius=0.5, start angle=270, delta angle=-35]
        (-1,0) arc[y radius=0.2, x radius=0.5, start angle=270, delta angle=35]
        (-1,0) arc[y radius=0.2, x radius=0.5, start angle=270, delta angle=-35]
        (1,0) node[particle] {$\bm{D_6}$};
      \end{scope}
      \draw[->] (A)-- (B1)  node[pos=0.7,shift={(0,0.3)},font=\scriptsize] {$\frac{1}{6}$};
      \draw[->] (A)--(B2) node[pos=0.7,shift={(0,0.2)},font=\scriptsize] {$\frac16$};
      \draw[->] (A)--(B3) node[pos=0.7,shift={(0,-0.225)},font=\scriptsize] {$\frac23$};
    \end{scope}

    \begin{scope}[shift={(13,-11)}]
      \begin{scope}
        \draw[very thick] 
        (0,0) arc[y radius=0.2, x radius=0.5, start angle=270, delta angle=35]
        (0,0) arc[y radius=0.2, x radius=0.5, start angle=270, delta angle=-35]
        (1,0) arc[y radius=0.2, x radius=0.5, start angle=270, delta angle=35]
        (1,0) arc[y radius=0.2, x radius=0.5, start angle=270, delta angle=-35]
        (-1,0) arc[y radius=0.2, x radius=0.5, start angle=270, delta angle=35]
        (-1,0) arc[y radius=0.2, x radius=0.5, start angle=270, delta angle=-35]
        (0,0) node[particle] {$\bm{B_6}$}
        (1.5,0) node (A) {};
      \end{scope}
      
      \begin{scope}[shift={(6,1.25)}]
        \draw[very thick] 
        (-1.5,0) node (B1) {}
        (0,0) arc[y radius=0.2, x radius=0.5, start angle=270, delta angle=35]
        (0,0) arc[y radius=0.2, x radius=0.5, start angle=270, delta angle=-35]
        (1,0) arc[y radius=0.2, x radius=0.5, start angle=270, delta angle=35]
        (1,0) arc[y radius=0.2, x radius=0.5, start angle=270, delta angle=-35]
        (-1,0) arc[y radius=0.2, x radius=0.5, start angle=270, delta angle=35]
        (-1,0) arc[y radius=0.2, x radius=0.5, start angle=270, delta angle=-35]
        (-1,0) node[particle] {$\bm{B_5}$};
      \end{scope}
      \begin{scope}[shift={(6,0)}]
        \draw[very thick] 
        (-1.5,0) node (B2) {}
        (0,0) arc[y radius=0.2, x radius=0.5, start angle=270, delta angle=35]
        (0,0) arc[y radius=0.2, x radius=0.5, start angle=270, delta angle=-35]
        (1,0) arc[y radius=0.2, x radius=0.5, start angle=270, delta angle=35]
        (1,0) arc[y radius=0.2, x radius=0.5, start angle=270, delta angle=-35]
        (-1,0) arc[y radius=0.2, x radius=0.5, start angle=270, delta angle=35]
        (-1,0) arc[y radius=0.2, x radius=0.5, start angle=270, delta angle=-35]
        (1,0) node[particle] {$\bm{F_5}$};
      \end{scope}
      \begin{scope}[shift={(6,-1.25)}]
        \draw[very thick] 
        (-1.5,0) node (B3) {}
        (0,0) arc[y radius=0.2, x radius=0.5, start angle=270, delta angle=35]
        (0,0) arc[y radius=0.2, x radius=0.5, start angle=270, delta angle=-35]
        (1,0) arc[y radius=0.2, x radius=0.5, start angle=270, delta angle=35]
        (1,0) arc[y radius=0.2, x radius=0.5, start angle=270, delta angle=-35]
        (-1,0) arc[y radius=0.2, x radius=0.5, start angle=270, delta angle=35]
        (-1,0) arc[y radius=0.2, x radius=0.5, start angle=270, delta angle=-35]
        (1,0) node[particle] {$\bm{D_5}$};
      \end{scope}
      \draw[->] (A)-- (B1)  node[pos=0.7,shift={(0,0.3)},font=\scriptsize] {$\frac{1}{7}$};
      \draw[->] (A)--(B2) node[pos=0.7,shift={(0,0.2)},font=\scriptsize] {$\frac17$};
      \draw[->] (A)--(B3) node[pos=0.7,shift={(0,-0.225)},font=\scriptsize] {$\frac57$};
    \end{scope}
    
  \end{tikzpicture}
  \end{center}
  
\caption{The distribution of children for each particle type in the proof of
         \thref{prop:5.5tree}.}
\label{fig:5.5treechildren}
\end{figure}
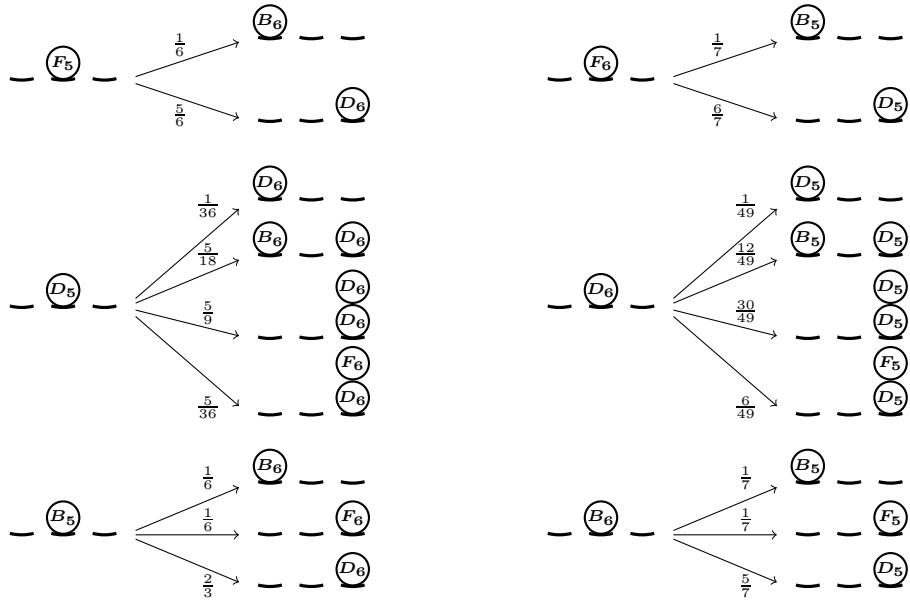

To analyze $\zeta_n$, we use a generalization of the martingale from \thref{prop:6tree}
to the multitype setting, introduced in \cite{first_and_last}.
Let $\zeta_n = \sum_i \zeta_n^{i}$, where $i$ ranges over the six particle types
and $\zeta_n^{i}$ denotes the restriction of $\zeta_n$ to particles of type~$i$.
Recalling the weight function $w$ given by \eqref{eq:weightfunc}, we define a matrix
$\Phi(\theta)$ by
\begin{align*}
  \Phi_{ij}(\theta) &= \E_i\Bigl[ w\bigl(\zeta_1^{j}\bigr)\Bigr].
\end{align*}
Here, we use $\E_i$ to denote expectation when $\zeta_0$ is a single particle at the origin
of type~$i$. Let $w_n$ denote a row vector whose $i$th entry is $w\bigl(\zeta_n^{i}\bigr)$.
Then
\begin{align}
  \E[w_{n+1} \mid  \zeta
  _n]
    &= w_n\Phi(\theta). \label{eq:martingaleeq}
\end{align}
Thus, for any eigenvalue~$\lambda$ and associated right eigenvector~$v$ of $\Phi(\theta)$,
\begin{align*}
  \E[w_{n+1}v \mid \zeta_n] &= w_n\Phi(\theta)v = \lambda w_nv,
\end{align*}
and so $w_nv / \lambda^n$ is a martingale. 

Since $\Phi(\theta)$ is a nonnegative irreducible matrix,
there is a positive eigenvalue
$\phi(\theta)$ equal to the spectral radius of $\Phi(\theta)$ by the Perron--Frobenius theorem. The eigenvector $v(\theta)$
associated with
$\phi(\theta)$ has strictly positive entries. We then have a positive martingale
$w_nv(\theta) / \phi(\theta)^n$. If $\phi(\theta)< 1$, then it follows
as in \thref{prop:6tree} that
the branching random walk visits $0$ finitely often,
thus proving that the frog model is almost surely transient.

All that remains is to find some value of $\theta$ such that $\phi(\theta)< 1$.
Ordering the rows and columns $F_5$, $D_5$, $B_5$, $F_6$, $D_6$, $B_6$ and reading
off $\E_i\bigl[w(\zeta_i^j)\bigr]$ from Figure~\ref{fig:5.5treechildren},
\begin{align*}
  \Phi(\theta) &=
  \begin{bmatrix}
    0 & 0 & 0 & 0 & \frac56 e^{-\theta} & \frac16 e^{\theta}\\[4pt]
    0 & 0 & 0 & \frac{5}{36} e^{-\theta} & \frac{1}{36}e^\theta + \frac{55}{36}e^{-\theta}
        & \frac{5}{18}e^{\theta}\\[4pt]
    0 & 0 & 0 & \frac16 e^{-\theta} & \frac23 e^{-\theta} & \frac16 e^{\theta}\\[4pt]
    0 & \frac67 e^{-\theta} & \frac17 e^{\theta} &0 & 0 & 0\\[4pt]
    \frac{6}{49} e^{-\theta} & \frac{1}{49}e^\theta + \frac{78}{49}e^{-\theta}
        & \frac{12}{49}e^{\theta}&0 & 0 & 0\\[4pt]
    \frac17 e^{-\theta} & \frac57 e^{-\theta} & \frac17 e^{\theta}&0 & 0 & 0\\[4pt]
  \end{bmatrix}.
\end{align*}
Computing the eigenvalues of this matrix numerically, one can confirm
that there exists $\theta$ with $\phi(\theta)< 1$; for example,
 $\phi(\log 3)\approx 0.9937$.
To be completely certain that this is not an artifact of rounding,
we will justify that $\phi(\log 3)<1$ without using floating-point arithmetic.
Observe that $\Phi(\log 3)$ has rational entries. Using the computer algebra system
SAGE, we calculated $(\Phi(\log 3))^{66}$ using exact arithmetic, and we found that its
largest row sum was less than $1$. (The only significance of the $66$th power is that
it is the lowest one for which this is true.)
This implies that all eigenvalues of $(\Phi(\log 3))^{66}$ are less than 1, which implies
that all eigenvalues of $\Phi(\log 3)$ are less than 1 as well.
The source code accompanying this paper includes this matrix and has instructions
so that readers can easily check these claims.
\end{proof}

  \begin{remark} We chose to include this proof to illustrate the technique we use to prove \thref{thm:5tree}~(\ref{d5}). Furthermore, this provides an example of proving the frog model is transient on an interpolation between different degree trees. This is relevant because the sharpest proof of \thref{conj:phase_transition} would find exactly where the phase transition occurs between recurrence and transience on $\TT_d$, 
 perhaps between $d=3$ and $d=4$. Last, a natural generalization is a frog model on Galton-Watson trees. Our argument depends on the deterministic structure of $\TT_{5,6}$ and we do not see an obvious way to generalize it.  

 \end{remark}
 

Having proven transience for the frog model on $\TT_d$ with $d\geq 6$ and on $\TT_{5,6}$, 
we present our final refinement to prove the  $\TT_5$ case.
  The proof is essentially the same as the previous one, but with more particle
  types and a more difficult calculation.

\begin{proof}[Proof of \thref{thm:5tree}~(\ref{d5})]
   
  We define a particle type $P(a, b, c)$, for $a\geq 1$ and $b,c\geq 0$.
  A particle of type $P(a,b,c)$ represents
  $a$ frogs on one vertex. 
  There are no sleeping frogs on at least $b$ of the vertex's children
  and on at least $c$ of the vertex's siblings. In this scheme, the $F$~types from the
  previous proof would translate
  to $P(1,0,0)$, the $D$~types would translate to $P(2,0,0)$,
  and the $B$~types would translate to $P(1,1,0)$.
  
  We use 27 of these particles, $P(a,b,c)$ with $1\leq a\leq 3$
  and $0\leq b,c\leq 2$.
  For particle type $P(a,b,c)$, consider the frog model on the homogeneous
  tree, starting with $a$ frogs at the root. As usual, remove the sleeping frogs
  from direct ancestors
  of the root. Also remove the sleeping frogs from $b$ children of the root
  and from $c$ siblings. From each of these 27 initial states, we compute all possible
  states to which the frog model could transition in two steps, along with the
  exact probabilities of doing so. We then represent each of these final states
  as a collection of particles of the 27~types, at levels $-2$, $0$, and
  $2$ on the tree. In this way, we determine child distributions for each
  particle type, as in Figure~\ref{fig:5.5treechildren}.
  There is a slight ambiguity in how to do this, as a state of frogs can be represented
  in more than one way by these particle types. For example, four frogs on one vertex with
  one sibling vertex with no sleeping frog could be represented as two particles
  of type $P(2,0,1)$, or as one of type $P(3,0,1)$ and one of type $P(1,0,1)$.
  We always chose particles greedily, opting for as many $3$-frog particles as possible. 
  Whatever choice we make here, our branching random walk will
  still dominate the frog model, since when we assign new particles we ``reset'' the tree below them so the particles wake at least as many frogs as their counterpart in the frog model.
  
  As in \thref{prop:5.5tree}, it suffices to compute the matrix $\Phi(\theta)$ and
  show that for some choice of $\theta$, its highest eigenvalue is less than one. 
  Our attached source code computes $\Phi(\theta)$ exactly.
  We include additional documentation there explaining how we performed this calculation
  and describing the steps we took to make sure it was trustworthy.
  To avoid rounding issues, we proceeded as with \thref{prop:5.5tree}.
  We exactly computed $(\Phi(\log 3))^{1024}$ by succesively squaring the matrix ten times,
  and we then checked that all of its row sums were less than 1.
  (There is no significance to the value $\log 3$; it just happens to work.)
  Thus, all eigenvalues of $(\Phi(\log 3))^{1024}$ are less than 1, implying
  that all eigenvalues of $\Phi(\log 3)$ are less than 1 as well.
  \end{proof}



\section{A frog model without a 0-1 law}

We obtain a graph on which the frog model does not satisfy a 0-1 law by combining the transient graph $\TT_6$ with the recurrent graph $\Z$ and proving that there is a positive probability that the frogs in each do not interact much.

To this end, we first prove two lemmas. The first shows there is a positive probability that the rightmost frog on the $\Z$ part of the combined graph
 escapes to $\infty$ while avoiding 0. This is necessary to rule out the possibility that too many frogs from $\Z$ get lost forever in $\TT_6$. The second lemma proves there is a positive probability a frog model on $\TT_6$ never returns to the origin, thus establishing a chance that the frog model on $G$ gets lost in the transience of $\TT_6$. 

\begin{lemma} \thlabel{lem:right}
  Consider the frog model on $G$, the graph formed by merging the root of $\TT_6$ and the origin
  of $\ZZ$. With positive probability, the frogs starting at $1,2,\ldots$ in $\ZZ$ all wake up.
\end{lemma}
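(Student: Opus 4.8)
The plan is to reduce the statement to showing that the rightmost awakened site on the positive $\ZZ$-ray tends to infinity, and then to drive this front outward by a relay argument whose one delicate point is a gambler's-ruin estimate that must beat a borderline divergent product.

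First I would record a structural observation. Every vertex $k\ge 1$ of the $\ZZ$-ray has degree two in $G$ (its only neighbors are $k-1$ and $k+1$), so a frog can reach site $k$ only after passing through $k-1$. Consequently the set of awakened sites among $\{1,2,\dots\}$ is always a prefix $\{1,\dots,M\}$, and ``all of $1,2,\dots$ wake up'' is equivalent to $M\to\infty$, i.e.\ to site $n$ being awakened for every $n$. Writing $\tau_n$ for the first time site $n$ is visited, the goal becomes $\P[\tau_n<\infty \text{ for all } n]>0$.

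The base case is immediate: the initial frog sits at the merged vertex $\emptyset$, which has degree $8$ in $G$, so with probability $1/8$ its first step is to site $1$, waking it. The heart of the argument is the inductive step, for which I would exploit the ``doubling'' inherent in the frog model. At the instant $\tau_n$ that site $n$ is first visited, there are at least two awake frogs sitting at $n$: the frog that just arrived from $n-1$, and the frog it has just woken. By the strong Markov property these two frogs henceforth perform independent simple random walks from $n$. Crucially, to reach the tree $\TT_6$ a walk must pass through $\emptyset$, so until a frog first returns to the origin it is confined to the ray $\{1,2,\dots\}$ and behaves exactly like a walk on $\ZZ$. A gambler's-ruin computation then gives that each of the two frogs reaches $n+1$ before returning to the origin with probability $n/(n+1)$, independently. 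Hence site $n+1$ is awakened unless both frogs hit the origin first, an event of probability $\bigl(1/(n+1)\bigr)^2$; formally, on $\{\tau_n<\infty\}$ one obtains $\P[\tau_{n+1}<\infty \mid \mathcal F_{\tau_n}]\ge 1-(n+1)^{-2}$.

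Chaining these estimates yields $\P[\tau_n<\infty \text{ for all } n]\ge \tfrac18\prod_{n\ge 1}\bigl(1-(n+1)^{-2}\bigr)$, and since $\sum_n (n+1)^{-2}<\infty$ this product is strictly positive, proving the lemma. The main obstacle — and the reason the estimate is arranged this way — is precisely that the naive single-frog bound gives only a per-step failure probability of $1/(n+1)$, whose product telescopes to $0$ (reflecting the null recurrence of simple random walk on $\ZZ$); it is the presence of \emph{two} independent frogs at each freshly awakened site that squares this to the summable $(n+1)^{-2}$ and rescues the argument. The remaining work is bookkeeping: justifying the strong Markov/independence claim for the two frogs at $n$, and confirming that the confinement to the positive ray before the first visit to $\emptyset$ makes the tree $\TT_6$ irrelevant to the gambler's-ruin computation.
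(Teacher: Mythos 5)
Your proposal is correct and follows essentially the same route as the paper: the $1/8$ base case from the degree-$8$ merged vertex, then the inductive relay using the two independent frogs at the newly awakened site $n$ and the gambler's-ruin bound $1-(n+1)^{-2}$, giving the convergent product $\tfrac18\prod_{n\ge1}\bigl(1-(n+1)^{-2}\bigr)>0$. The only difference is that you make explicit two points the paper leaves implicit (the prefix structure of awakened sites and the confinement to the ray before hitting $\emptyset$), which is fine.
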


\begin{proof}
  Let 
  \begin{align*}
    \delta_n = 
    \frac18 \prod_{k=1}^{n-1} \biggl(1-\frac{1}{(k+1)^2}\biggr),
  \end{align*}
  taking $\delta_1=1/8$.
  We will show by induction that the frogs at $1,\ldots,n$ wake up with probability
  at least $\delta_n$.
  When $n=1$, this holds because the initial frog moves right on its first step with probability~$1/8$.
  Now, assume the statement for $n$. Condition on the frogs at $1,\ldots,n$ being woken.
  From the time when the frog at $n$ is woken on, the two frogs there are independent random walkers,
  and at least one of them reaches $n+1$ before $0$ with probability $1-1/(n+1)^2$ by a standard
  martingale argument. Thus frog $n+1$ is woken with probability at least 
  $\delta_n\bigl(1-1/(n+1)^2\bigr)=\delta_{n+1}$, completing the induction.
  
  Taking a limit of increasing events, the probability of the frogs at $1,2,\ldots$ all waking
  is at least $\lim_{n\to\infty}\delta_n>0$.
\end{proof}

\begin{lemma} \thlabel{lem:lost}
Let $p'$ be the probability that the root is never visited past the initial frog's first move in the frog
model on $\TT_6$. It holds that $p' >0 $. 

\end{lemma}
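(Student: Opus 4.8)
The plan is to dominate the frog model on $\TT_6$ by the branching random walk $\pproc$ from \thref{prop:6tree} and to show that this walk has a positive probability of never returning to its starting level, from which positivity of $p'$ follows. First I would reduce to a statement about $\pproc$. The root of $\TT_6$ has degree~$6$, so the initial frog's first move is forced to carry it to a child $\emptyset'$ at level~$1$, where it wakes a second frog. Projecting the frog model onto levels (distance from the root) and coupling as in \thref{prop:6tree}, every awake frog has a corresponding particle of a branching random walk at the same level, where particles reproduce as in $\pproc$: one child one step back (probability $1/7$) or two children one step forward (probability $6/7$). This domination is valid as long as no frog has reached level~$0$, which is all I need. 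Writing $q_1$ for the probability that $\pproc$ started from a single particle at $0$ never reaches $-1$, translation invariance identifies ``a particle at level~$1$ never reaches level~$0$'' with $q_1$, and the two independent subtrees generated by the two particles at level~$1$ give
\begin{align*}
  p' \geq q_1^2,
\end{align*}
so it suffices to prove $q_1 > 0$.

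To analyze this, let $q_m$ denote the probability that $\pproc$, started from a single particle at $0$, never reaches level $-m$. A first-step analysis, conditioning on whether the initial particle steps left or right, using translation invariance to shift the target level and the independence of the two subtrees created by a right step, yields
\begin{align*}
  q_1 = \tfrac67 q_2^2, \qquad q_m = \tfrac17 q_{m-1} + \tfrac 67 q_{m+1}^2 \quad (m \geq 2).
\end{align*}
I also need that $q_m \to 1$ as $m \to \infty$. This follows from the martingale in \thref{prop:6tree}: since $\mu < 1$ for $d = 6$, the limit of $w(\pproc_n)/\mu^n$ is finite and so $w(\pproc_n) \to 0$ almost surely; as $w(\pproc_n) \geq e^{-\theta \min_i P(i)}$ with $\theta > 0$, the minimal occupied position tends to $+\infty$, whence the all-time minimum is finite almost surely and $q_m \to 1$.

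With these in hand, positivity of $q_1$ is immediate by contradiction: if $q_1 = 0$, then $q_2 = 0$ from the first relation, and the recursion then forces $q_{m+1} = 0$ whenever $q_{m-1} = q_m = 0$, so inductively $q_m = 0$ for all $m$, contradicting $q_m \to 1$. Hence $q_1 > 0$ and $p' \geq q_1^2 > 0$.

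The main obstacle is precisely this last positivity. Transience only guarantees that the expected number of returns to the root is finite; in fact the naive weight-function bound on the expected number of visits to level~$0$ is much larger than~$1$, so Markov's inequality is useless for producing a positive probability of \emph{zero} returns. Extracting genuine positivity instead exploits the recursive self-similarity of the branching random walk, which is exactly what the $q_m$ recursion together with the boundary behavior $q_m \to 1$ accomplishes. A secondary point needing care is the domination itself: I would justify that the level-projection of the frog model is dominated by $\pproc$ only up to the first visit to level~$0$, where all frogs sit at levels $\geq 1$ (degree~$7$), so the same coupling as in \thref{prop:6tree} applies without having to account for the anomalous degree of the root.
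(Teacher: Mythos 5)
Your proof is correct, and while it shares the paper's overall architecture---dominate the frogs by the branching random walk $\pproc$ of \thref{prop:6tree} and reduce to showing that $\pproc$ avoids the origin with positive probability---it establishes that key positivity by a genuinely different mechanism. The paper's proof is soft: it routes the reduction through $\homtree_6$ via \thref{lem:Gcoupling}, and then argues that if $\pproc$ returned to $0$ almost surely, the branching property and translation invariance would force infinitely many returns, contradicting the a.s.\ finiteness of visits already extracted from the martingale in \thref{prop:6tree}. You instead work directly on $\TT_6$ (using that the coupling with $\pproc$ only needs to hold while all frogs sit at degree-$7$ vertices, i.e.\ until the first return), and you make the positivity quantitative: the first-step recursion $q_1=\tfrac67 q_2^2$, $q_m=\tfrac17 q_{m-1}+\tfrac67 q_{m+1}^2$ propagates $q_1=0$ to $q_m=0$ for all $m$, which is incompatible with $q_m\to 1$; the latter follows from the slightly stronger consequence $w(\pproc_n)\to 0\Rightarrow\min_i P(i)\to+\infty$ of the same martingale. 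Both arguments are valid and rest on the same underlying transience input; the paper's is shorter, while yours is more explicit about the structure of the avoidance probabilities and sidesteps the detour through the homogeneous tree. One small point worth making explicit in your write-up is why a backward jump never wakes a frog on $\TT_6$ (the visited set is a connected subtree containing the root, so the parent of any visited non-root vertex is already visited), since this is what makes the two offspring of a forward step in $\pproc$ sufficient to dominate.
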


\begin{proof}
  As in \thref{prop:6tree}, consider the frog model on $\TT^{\text{hom}}_6$, starting
  with no sleeping frogs at direct ancestors of the root.
  By \thref{lem:Gcoupling}, following the reasoning of \thref{cor:homtree},
  there is a coupling
  so that the number of visits to level~$0$ in the frog model on $\TT^{\text{hom}}_6$
  is at least the number of visits to the root in the frog model on $\TT_6$.

  Now, recall from \thref{prop:6tree} the point process~$\xi$, a branching random walk on $\ZZ$
  in which particles split whenever they move in the positive direction. This process dominates
  the projection of the frog model on $\TT^{\text{hom}}_6$ onto the integers. Putting this all together,
  it suffices to show that with positive probability, $\xi_n$ avoids $0$ for all $n\geq 1$.
  
  Suppose not, so $\xi$ a.s.\ revisits $0$. Since particles in $\xi$ reproduce independently,
  this implies that $\xi$ returns to the origin infinitely often. This is a contradiction, as we showed the opposite
  in proving \thref{prop:6tree}.
\end{proof}

\begin{proof}[Proof of \thref{thm:counterexample}]

In two steps, we bound the probability~$p$ of recurrence 
on $G=(\ZZ\cup \TT_6) / \{0 \sim \emptyset\}$:

\begin{description}

    \item[($p>0$)] The probability is $0$ that any frog starting in $\ZZ$ wakes but fails to visit $0$,
      by the recurrence of simple random walk on $\ZZ$.
      All frogs at $1,2,\ldots$ wake up with positive probability by \thref{lem:right}, and on this
      event they therefore all visit $0$.

    \item[($p<1$)] With probability $6/8$ the first jump of the frog at $0 \in G$ will be into $\TT_6$. Conditional on this, \thref{lem:lost} guarantees a frog model in this configuration will never again visit the origin with probability $p'>0$. Therefore, $1-p \geq \f 68 p'$. \qedhere
\end{description}
\end{proof}


\section{Conjectures}
\label{sec:conjectures}

Simulations suggest that for $d=3$ the frog model on $\mathbb T_d$ is recurrent a.s., while for $d = 4$ the model is transient a.s.
Our approach was to consider the frog model with the addition of stunning fences at each depth. When a frog jumps on a fence for the first time, it is stunned and stops moving.
When all frogs are stunned at depth~$k$, the fence turns off, and frogs resume their motion until
they reach depth~$k+1$ and are stunned again. 
Let $A_{d,k}$ be the number of stunned frogs 
that pile up on the fence at depth $k$ before it turns off.
We then examined the growth of $A_{d,k}$ in $k$ for different choices of $d$.
(The more obvious approach of directly simulating the frog model and counting visits to the root
does not yield any obvious conclusions, as the rapid growth of the frog model makes it impossible
to simulate very far.)


Martingale techniques tell us that the probability a frog at distance $k$ from the root reaches the root before visiting depth $k+1$ is greater than $c d^{-k}$ for some $c>0$ independent of $k$. It follows that  $$\E[\text{visits to root between $k$th and $(k+1)$th stunnings}] \geq c d^{-k} \E[A_{d,k}].$$ So, if $\sum_{k=1}^\infty d^{-k}\E[ A_{d,k} ] = \infty$, then the expected number of visits to the root is infinite, which strongly suggests the model is
recurrent.

This occurs if $kd^{-k}\E[A_{d,k}]$ is bounded from below. The data in Figure~\ref{fig:data} summarizes the behavior of $k d^{-k} \E[A_{d,k}]$ to the maximum $k$ we could easily simulate, $k=18$. For $d=4$, the slow
growth of $A_{d,k}$ makes us suspect that the model is transient.
The plot for $d=2$ confirms \thref{thm:2tree}~(\ref{d2}). Interestingly, $d=3$ appears to be recurrent but very near criticality. The different growth for $d=3$ is grounds for further investigation: it is possible $d=2$ and $d=3$ exhibit different forms of recurrence.

For $d=2$ the simulated values of $k2^{-k} \E[A_{2,k}]$ appear to be growing linearly. This suggests a constant expected number of returns between each successive stunning. As the average number of steps for an individual frog between stunnings is constant, this could indicate that the average time between returns is also bounded away from infinity. If this is the case then the probability that there is a frog at the origin at time $t$ would be bounded away from 0 as $t$ gets large. However, for $d=3$ it appears that $k3^{-k} \E[A_{3,k}]$ is sublinear. This might indicate that the average time between returns is unbounded and the probability of a frog occupying the origin at time $t$ is approaching 0 as $t$ approaches infinity.
This leads us to ask the following:

\begingroup
\def\thethm{\ref{q:strong_weak}}
\begin{question}
Is the frog model strongly recurrent on $\TT_2$, but only weakly recurrent on $\TT_3$?
\end{question}
\addtocounter{thm}{-1}
\endgroup
Such a result would have analogues with other interacting particle systems on trees. For example percolation on 
$\TT_6 \times \Z$ has a three phases: no infinite components, infinitely many infinite components and a unique infinite component \cite{grimmett1990percolation}.
Similarly the contact process on trees can have strongly recurrent, weakly recurrent, and extinction 
phases \cite{pemantle92}.

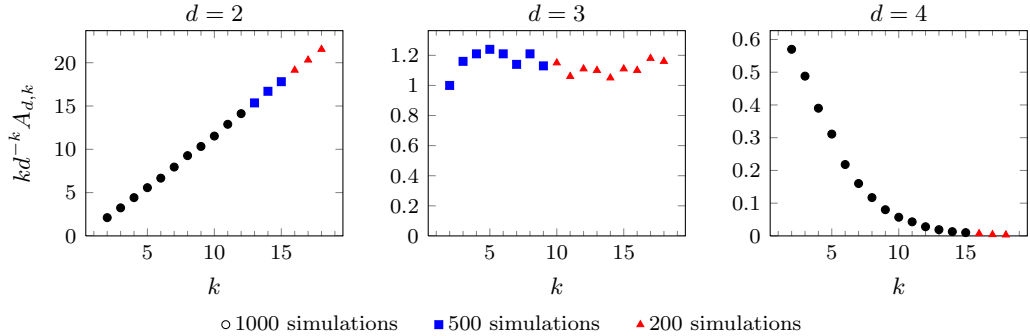
\begin{figure}
  \begin{center}
  \begin{tikzpicture}
    \begin{axis}[
      title={$d = 2$},
      xlabel={$k$},
      ylabel={$kd^{-k}A_{d,k}$},
      ymin=0, footnotesize,xtick={0,5,...,20},
      minor x tick num=4
      ]
      \addplot[only marks, mark size=1.5pt] 
        table[x index=0, y index = 1] {
 2    2.11
 3    3.24
 4    4.42
 5    5.57
 6    6.67
 7    7.95
 8    9.27
 9   10.33
10   11.53
11   12.89
12   14.12
};
      \addplot[only marks, mark size=1.5pt,only marks, blue, mark=square*, mark size=1.5pt] 
        table[x index=0, y index = 1] {
13   15.36
14   16.70
15   17.81
};
      \addplot[only marks, mark size=1.5pt,only marks, red, mark=triangle*, mark size=1.5pt] 
        table[x index=0, y index = 1] {
16   19.14
17   20.31
18   21.55
};
    \end{axis}
    
  \end{tikzpicture}\hspace{0.5cm}%
  \begin{tikzpicture}
    \begin{axis}[
      title={$d = 3$},
      xlabel={$k$},
      ymin=0, footnotesize,xtick={0,5,...,20},
      minor x tick num=4,
      legend style={draw=none},
      legend entries={1000 simulations\hspace*{12pt},500 simulations\hspace*{12pt},%
                      200 simulations},
      legend columns=-1,
      legend to name={leg}
      ]
      \addplot[only marks, mark size=1.5pt] table[x index=0, y index=1] {
0     -1      
      };
      \addplot[only marks, blue, mark=square*, mark size=1.5pt]  table[x index=0, y index = 1] {
 2    1.00
 3    1.16
 4    1.21
 5    1.24
 6    1.21
 7    1.14
 8    1.21
 9    1.13
 };
       \addplot[only marks, red, mark=triangle*, mark size=1.5pt]  table[x index=0, y index = 1] {
10    1.15
11    1.06
12    1.11
13    1.10
14    1.05
15    1.11
16    1.10
17    1.18
18    1.16
};
    \end{axis}    
  \end{tikzpicture}\hspace{0.5cm}%
  \begin{tikzpicture}
    \begin{axis}[
      title={$d = 4$},
      xlabel={$k$},
      ymin=0, footnotesize,xtick={0,5,...,20},
      minor x tick num=4
      ]
      \addplot[only marks, mark size=1.5pt]
         table[x index=0, y index = 1] {
 2   0.570
 3   0.488
 4   0.390
 5   0.311
 6   0.218
 7   0.160
 8   0.117
 9   0.080
10   0.057
11   0.043
12   0.028
13   0.019
14   0.013
15   0.010
};
      \addplot[only marks, mark size=1.5pt, mark=triangle*,red]
         table[x index=0, y index = 1] {
16   0.007      200
17   0.004      200
18   0.003      200
};
    \end{axis}    
  \end{tikzpicture}
  \ref{leg}
  \end{center}
  \caption{Plots of simulated values of $kd^{-k}A_{d,k}$ against $k$ for $d=2,3,4$.
  The number of simulations used in each estimate is shown in the chart.}
  \label{fig:data}
\end{figure}







\subsubsection*{Acknowledgments}

We would like to thank Shirshendu Ganguly for his suggestions throughout the project. Christopher Fowler helped with a calculation, Avi Levy asked a question which led to the inclusion of \thref{lem:preserveincreasing} and James Morrow helped address potential concerns about roundoff error. Thanks to Soumik Pal who pointed out for large $d$ the dynamics should be simpler---this remark sparked our study of transience. We thank Robin Pemantle for directing our attention to \cite{AB}.
We also thank Nina Gantert who pointed out an inaccuracy in a previous version.

The first author was partially supported by NSF grant DMS-1308645 and NSA grant H98230-13-1-0827,
the second author by NSF CAREER award DMS-0847661 and NSF grant DMS-1401479, and the
 third author by NSF RTG grant 0838212.

\bibliographystyle{amsalpha}
\bibliography{frog_paper2}

\end{document}